\documentclass[11pt,reqno]{amsart}
\usepackage[english]{babel}

\usepackage{tikz}
\usetikzlibrary{positioning}
\usepackage{xcolor} 

\usepackage{amsmath, amssymb,amsthm}
\usepackage{amsfonts}
\usepackage{comment}
\usepackage[margin=1.0in]{geometry}
\usepackage{enumerate}
\usepackage{graphicx}
\usepackage{hyperref}
\usepackage{dsfont,bbm}
\usepackage{cite}
\usepackage{amstext}
\usepackage{mathrsfs}

\theoremstyle{plain}
\newtheorem{theorem}{Theorem}[section]
\newtheorem{definition}[theorem]{Definition}
\newtheorem{corollary}[theorem]{Corollary}
\newtheorem{proposition}[theorem]{Proposition}
\newtheorem{lemma}[theorem]{Lemma}
\newtheorem{remark}[theorem]{Remark}

\numberwithin{theorem}{section}
\numberwithin{equation}{section}

\newcommand{\N}{\mathbb{N}}

\newcommand{\R}{\mathbb{R}}

\newcommand{\cB}{{\mathcal B}}

\newcommand{\cE}{{\mathcal E}}
\newcommand{\cF}{{\mathcal F}}
\newcommand{\cG}{{\mathcal G}}
\newcommand{\cH}{{\mathcal H}}

\newcommand{\cJ}{{\mathcal J}}
\newcommand{\cK}{{\mathcal K}}

\newcommand{\cO}{{\mathcal O}}
\newcommand{\cP}{{\mathcal P}}

\newcommand{\cV}{{\mathcal V}}

\subjclass[2020]{
49Q10, 
35B10, 
35J93, 
34K18, 
46G05, 
82D60, 
}

\makeatother
\keywords{Screened Coulomb potential, Otha--Kawasaki energy, Diblock copolymer melt, Delaunay--type surfaces, Equilibrium periodic patterns.}

\begin{document}

\title[Delaunay Interfaces in screened diblock copolymers]
{Delaunay-type  interface in a screened  model of diblock copolymer melts}

\author{Guy Foghem$^{\dagger}$}
\address{{\small{G. Foghem: Brandenburgische Technische Universit\"at Cottbus--Senftenberg, Fakult\"{a}t 1: MINT Fachgebiet Mathematik, Platz der Deutschen Einheit 1, 03046 Cottbus, Germany.} \href{https://orcid.org/0000-0002-8917-7309}{ORCID}, Email:{guy.foghem[at]b-tu.de} }}

\thanks{$\dagger$ Financial support from the Deutsche Forschungsgemeinschaft (DFG) through the Walter Benjamin Programme (project FO~1699/1-1) is gratefully acknowledged.}

\author{Mouhamed Moustapha Fall$^{\ddagger}$}
\address{\small{M. M. Fall: African Institute for Mathematical Sciences in Senegal, KM 2, Route de Joal, B.P. 14 18. Mbour, Senegal.}  \href{https://orcid.org/0000-0003-4983-1895}{ORCID}, Email: {mouhamed.m.fall[at]aims-senegal.org}}

\thanks{$\ddagger$ This work was supported by the African Institute for Mathematical Sciences (AIMS), Senegal.}

\begin{abstract}
A diblock copolymer is a soft-matter composed of two chemically distinct block of repeating monomers  covalently bonded together at an end-to-end junction to form a single polymer chain. In this paper, we establish the existence of infinitely many smooth periodic unbounded domain patterns of Delaunay-type in $\mathbb{R}^3$ that optimize the energy distribution in diblock copolymer melts. We emphasize that pattern domains at the equilibrium correspond to stationary sets of the screened Ohta--Kawasaki free energy functional
\begin{align*}
\mathcal{P}_\gamma(\Omega) := |\partial\Omega| + \gamma \int_{\Omega}\int_{\Omega} G_{\kappa}(|x-y|) \,\mathrm{d}x\mathrm{d}y,
\end{align*}
where $\gamma>0$, $\kappa>0$ and  $G_\kappa(r)=\frac{1}{r} e^{-\kappa r}$ is the repulisive Yukawa potential. Equivalently, these equilibria satisfy the corresponding Euler--Lagrange equation
\begin{align*}
\cH_\Omega (x):= H_{\partial\Omega}(x) + \gamma \int_{\Omega} G_{\kappa}(|x-y|) \mathrm{d}y = \textrm{Const} \quad \text{on } \partial\Omega,
\end{align*}
where $H_{\partial\Omega}$ denotes the mean curvature of the surface $\partial\Omega$.
By analyzing the linearization of $\Omega \mapsto \mathcal{H}_\Omega$ around flat cylinders and applying the Crandall--Rabinowitz bifurcation theorem, for any $\kappa > 0$ and sufficiently small $\gamma > 0$, we prove the existence  of non-trivial, $2\pi$-periodic Delaunay-type equilibrium cylinder interfaces with shapes close to a Delaunay unduloid surface of constant mean curvature.
\end{abstract}

\maketitle
\vspace{-2mm}
\section{Introduction and main result}

A monomer is a molecule that can chemically bind with others to form a large macromolecular structure called a polymer. Polymers \cite{ASBXB20,RIES03,Rus96} are classified either as homopolymers, composed of a single type of repeating monomer unit inked together to form a monoblock 
or as copolymers, composed of two or more different monomer types. Copolymers formed from two monomers are termed bipolymers,  those formed from three are termed terpolymers, etc$\cdots$.
Beside this, a  diblock copolymer is a special type of polymer
consisting of exactly two different homopolymer blocks synthesized end-to-end point to form a single polymer chain.
\begin{figure}[h]
\centering

\begin{tikzpicture}[
A/.style={circle,draw=red!70!black,fill=red!80,minimum size=5mm,inner sep=0pt},
B/.style={circle,draw=blue!70!black,fill=blue!80,minimum size=5mm,inner sep=0pt},
bond/.style={red!70!black,line width=1.2pt},
bbond/.style={blue!70!black,line width=1.2pt},
font=\sffamily\bfseries
]

\begin{scope}[scale=0.73, shift={(0,0)}]


\node[left] at (-1.8,0) {\small Monoblock};

\foreach \i in {0,...,5}
{
    \node[A] (M\i) at (\i,0) {\color{white}A};
}

\foreach \i/\j in {0/1,1/2,2/3,3/4,4/5}
{
    \draw[bond] (M\i)--(M\j);
}

\node[left] at (-1.8,-1.1) {\small Diblock};

\foreach \i in {0,1,2}
{
    \node[A] (A\i) at (\i,-1.1) {\color{white}A};
}

\foreach \i in {3,4,5}
{
    \node[B] (B\i) at (\i,-1.1) {\color{white}B};
}

\foreach \i/\j in {0/1,1/2}
\draw[bond] (A\i)--(A\j);

\draw[bond] (A2)--(B3);

\foreach \i/\j in {3/4,4/5}
\draw[bbond] (B\i)--(B\j);

\end{scope}

\draw[line width=1.3pt] (4.1,0.3) -- (4.1,-1.5);
\draw[line width=1.3pt] (4.17,0.3) -- (4.17,-1.5);
\begin{scope}[scale=0.73, shift={(10.8,2.2)}]


\node[left] at (-1.8,-2.2) {\small Triblock};

\foreach \row in {0,1}
{
    \pgfmathsetmacro{\y}{-2.4-0.9*\row}

    \foreach \i in {0,1,2}
    \node[A] (TA\row\i) at (\i,\y) {\color{white}A};

    \foreach \i in {3,4,5}
    \node[B] (TB\row\i) at (\i,\y) {\color{white}B};

    \foreach \i/\j in {0/1,1/2}
    \draw[bond] (TA\row\i)--(TA\row\j);

    \draw[bond] (TA\row2)--(TB\row3);

    \foreach \i/\j in {3/4,4/5}
    \draw[bbond] (TB\row\i)--(TB\row\j);
}

\draw[bbond] (TB05)--(TB15);


\node[left] at (-1.8,-4) {\small Alternating};

\node[A] (AltA1) at (0,-4.4) {\color{white}A};
\node[B] (AltB1) at (1,-4.4) {\color{white}B};
\node[A] (AltA2) at (2,-4.4) {\color{white}A};
\node[B] (AltB2) at (3,-4.4) {\color{white}B};
\node[A] (AltA3) at (4,-4.4) {\color{white}A};
\node[B] (AltB3) at (5,-4.4) {\color{white}B};

\draw[bond]  (AltA1)--(AltB1);
\draw[bbond] (AltB1)--(AltA2);
\draw[bond]  (AltA2)--(AltB2);
\draw[bbond] (AltB2)--(AltA3);
\draw[bond]  (AltA3)--(AltB3);
\end{scope}
\end{tikzpicture}
\caption{Illustration of polymer chain-blocks.}
\label{fig:block-structures}
\end{figure}
In a diblock copolymer melt A/B, the interface is a critical region where the incompatible A-block and B-block polymers meet. Because of their chemical incompatibility (as discussed  for instance in \cite{ GrSa06,Rus96, Sem85}), the two blocks typically dislike mixing together (much like oil and water). Consequently, they naturally tend to separate at the macroscale, however,  their covalent bonding prevents macroscopic phase separation, leading the two blocks to spontaneously self-assemble at the nanoscale
by forming some periodic nanostructures such as lamellae, cylinders or spheres. The distribution of energy-driven patterns formed at an interface in a screened model of diblock copolymer melts is strongly influenced by the geometry of the interface. Accordingly, the problem of identifying domains of uniform energy distribution has attracted significant attention (see, e.g., \cite{OhKa86,Rus96,Sem85}). See also \cite{Mul12} for a statistical physics analysis of geometry-controlled domain interfaces in block copolymer films.  Furthermore, experimental evidences in  \cite{GrSa06} highlight  the importance periodic undulated cylindrical and spherical interfaces  for diblock copolymers.
\smallskip

\par
In this paper, we  are interested in the study of geometry of periodic interface  in $\mathbb{R}^3$, in which the energy  driven pattern created by diblock copolymer is uniformly distributed.
We focus our investigation on a class of periodic stationary sets in $\R^3$ for  the following  sharp functional energy driven pattern in the \textit{screened} model \cite{OhKa86} proposed by Otha-Kawasaki,
\begin{align}
\label{eq:Geom-pblem-interface-like-s-perim}
\cP_\gamma(\Omega):=  |\partial\Omega|+{\gamma}   \int_{\Omega}\int_{\Omega}{G}_{ \kappa}(|x-y|)  \mathrm{d}x\mathrm{d}y,
\end{align}
where
 $G_\kappa(r)=\frac{1}{r} e^{-\kappa r}$ is the repulsive Yukawa potential (also often referred to as  the repulsive \textit{screened} Coulomb potential), $\gamma>0$ is the regulation parameter encoding the strength of the repulsive interaction and $\kappa>0$
 is the screening factor.
 The Yukawa potential \cite{Yuk47} $\overline{G}_\kappa(x) = G_\kappa(|x|)$, which encodes the nonlocality in the model \eqref{eq:Geom-pblem-interface-like-s-perim}, is the Green's function of $(-\Delta + \kappa^2)$ meaning that
 \begin{align*}
-\Delta \overline{G}_\kappa + \kappa^2 \overline{G}_\kappa = \delta_0\qquad\text{ in $\R^3$}.
\end{align*}
This potential plays an important role in the theory of elementary particles (see \cite{Yuk47}). On the other hand, $|\partial\Omega|$ is the surface area enclosing $\Omega\subset \R^3$ encodes the locality in \eqref{eq:Geom-pblem-interface-like-s-perim} and depends directly on its mean curvature.
Note that, the model is termed ``\textit{screened}" because $\kappa > 0$, as,  long-range (nonlocal) interactions are exponentially damped (or screened) due the factor $e^{-\kappa r}$, by making long-range interactions less feasible. In contrast, the unscreened model ($\kappa = 0$) is driven  by the usual Newtonian or Coulomb interaction kernel $G_0(|x|) =|x|^{-1}$, allowing long-range interactions to occur freely. The latter model is often referred to as Gamow's liquid drop model, introduced in the seminal works \cite{Boh36,Gam30}.
\smallskip

We are interested in finding a class of periodic stationary sets in $\mathbb{R}^3$ for the model \eqref{eq:Geom-pblem-interface-like-s-perim}.  It is proved in \cite{Mur02} that, $\cP_\gamma$ provides the sharp interface limit of the Otha-Kawasaki model and for some other similar diffused models describing pattern formation. For this energy  $\cP_\gamma$, comprising   short-range interaction   (surface tension) and  attractive Yukawa potential, a stationary set $\Omega$ may separate into well ordered disjoint components.  This separation give rise to patterns    in the limit of    some reaction-diffusion model with Coulomb-type  attractive interaction such as the  Ohta-Kawasaki model of block polymers and the FitzHugh-Nagumo reaction-diffusion system, see for  instance  \cite{GMP96,SeAn95} and the figures therein.
Indeed,  the Ohta-Kawasaki model, is
a canonical mathematical model in the studies of energy-driven pattern forming @systems. This model, originally proposed in \cite{OhKa86}   investigates the statistical properties of the microphase separation in the strong-segregation limit and describes different morphologies
observed in diblock copolymer melts (for example see also \cite{BaFr99,Sem85}). The energy functional  is defined   by
\begin{align*}
\cE_\varepsilon(u):=\int_{E} \Big(\frac{\varepsilon^2}{2}|\nabla u|^2+W(u)\Big)\mathrm{d}x +\frac{\sigma}{2}\int_{E}\int_{E}G(x,y)(u(x)-\overline{u})(u(y)-\overline{u})\mathrm{d}x\mathrm{d}y,
\end{align*}
for an open set $E\subset\R^3$,
where $u:E\to \R$ and
\begin{align}
\label{eq:Neut-cond}
\overline{u}=\frac{1}{|E|}\int_{E}u(x)\mathrm{d}x\in (-1,1)
\end{align}
is the neutrality charge.
Here $G$ is the Green function of $E $ with zero Neumann boundary condition and  $W$ is  a double well potential with global minima at $u=\pm1$. For the precise values of the constants $\varepsilon$ and $\sigma$ we refer the reader to \cite{ChRe03} and  \cite{NiOh95}. In a diblock copolymer, we have two different monomer bonded together, which can be represented by $u = -1$ and $u = 1$ respectively. The connectivity of the two different monomers are due to the long-range (Green function $G$) interaction while the double well potential $W$ intends  to  segregate the  monomers. Moreover  $ \frac{\varepsilon^2}{2}|\nabla u|^2$,  interfacial energy density,  tends to   force the polymers to be locally uniformly distributed.  Periodic patterns are in general   expected  to minimize the  Ohta-Kawasaki energy in the
macroscopic setting, see \cite{OhKa86}. The first rigorous proof of this fact is due to    M\"{u}ller \cite{Mul93} in 1-dimension. Up to normalization of constants, the  Ohta-Kawasaki functional $\cE_\varepsilon$ admits       the following   ``refined version'' (or geometric model)
\begin{align}
\label{eq:GeomModel-BV}
\overline{\cE}_\varepsilon(u):=\frac{\varepsilon}{2}   \int_{E}|\nabla u |+\frac{\widetilde{\sigma} }{2}   \int_{E}\int_{E}\widetilde{G}_{ \widetilde{\kappa}}(x,y)(u(x)-\overline{u})(u(y)-\overline{u})\mathrm{d}x\mathrm{d}y ,
\end{align}
where   $u:E\to \{-1,1\}$, $\widetilde{\sigma}>0$,   $\widetilde{\kappa}=\frac{1}{\sqrt{W''(1)}}$ and  $\widetilde{G}_{ \widetilde{\kappa}}(x,\cdot)$ solves
$$-\Delta \widetilde{G}_{ \widetilde{\kappa}}(x,y)+\kappa^2 \widetilde{G}_{ \widetilde{\kappa}}(x,y)=\delta_x(y),$$ with Neumann boundary condition on $\partial E$.  We recall that for this model, the neutrality condition \eqref{eq:Neut-cond} is no longer imposed.  The paper of Muratov \cite{Mur02} contains a rigorous  derivation of  \eqref{eq:GeomModel-BV} from the general   mean-field free energy functional. Equilibrium patterns (spots, stripes, and the annuli) an morphlogical instabilities have been also studied \cite{Mur02}.
Muratov   showed in \cite{Mur10} that, in two space dimensions,  the minima  of  $\cE_\varepsilon$ and $\overline{\cE}_\varepsilon$ scales similarly for $\varepsilon$ is small.
Moreover, he proved that at certain energy level,  after suitable normalization and scaling  background charge density ($\overline{u}= \overline{u}_\varepsilon\to -1$, as $\varepsilon\to0$), the sharp interface energy $\overline{\cE}_\varepsilon$ is minimized by droplets  uniformly distributed throughout the domain $E$ when $\varepsilon\to 0$.     See also the work of  Goldmann, Muratov and Serfaty  in \cite{GMS13,GMS14}  for  related variant of   $\Gamma$-convergence on the 2-dimensional torus. We should notice that if $\overline{u}\in (-1,1)$ is fixed independent on $\varepsilon$ and $E$ the square lattice, Alberti, Choksi, and Otto in \cite{ACO09}   proved the $\Gamma$-convergence of  $\cE_\varepsilon$  to the unscreened ($\kappa=0$) version of \eqref{eq:GeomModel-BV}.

We use local inversion arguments to build Delaunay-type    periodic patterns in $\R^3$.  In the case $\kappa=0$, Ren and Wei constructed  several interesting domain patterns in their series of papers \cite{ReWe05,ReWe03,ReWe03ABC,ReWe07,ReWe07single,ReWe08,ReWe09,ReWe11,ReWe13,ReWe14,ReWe14Double} of type lamellar,
ring/cylindrical, and spot/spherical solutions.
Going back to the model \eqref{eq:GeomModel-BV}, since the Yukawa potential $G_\kappa$ is integrable on $\R^3$, we consider  $E=\R^3$ then $\widetilde{G}_{\widetilde{\kappa}}(x,y)=G_{\widetilde{\kappa}}(|x-y|)$ and we look for domain patterns $\overline{\Omega}$ which are periodic in different directions with (possibly) different periods.
Assuming that   the domain pattern $\overline{\Omega}$ that we are interested in  consists of the monomer $u=+1$  surrounded by the sea of monomers $u=-1$ in  the whole space $E=\R^3$ we can therefore let $\overline{u}=-1$. Hence the whole configuration is given by   $u=\tau_{\overline{\Omega}}=1_{\overline{\Omega}}-1_{\R^3\setminus {\overline{\Omega}}}$. Plugging these in   \eqref{eq:GeomModel-BV}, we then have
\begin{align*}
\overline{\cE}_\varepsilon(\tau_{\overline{\Omega}}):=\varepsilon |\partial \overline{\Omega}|+\widetilde{\sigma }   \int_{\overline{\Omega}}\int_{\overline{\Omega}}{G}_{ \widetilde{\kappa}}(|x-y|)  \mathrm{d}x\mathrm{d}y.
\end{align*}
Since we are looking at large domains, letting $\overline{\Omega}=\varepsilon {\Omega}$,  by a change of variable, we have
\begin{align*}
\overline{\cE}_\varepsilon(\tau_{\overline{\Omega}}):=\varepsilon^3
\Big(  |\partial\Omega|+ \varepsilon^2  \widetilde{\sigma }   \int_{\Omega}\int_{\Omega}{G}_{\varepsilon \widetilde{\kappa}}(|x-y|)  \mathrm{d}x\mathrm{d}y \Big).
\end{align*}
Note the expression in the parenthesis was derived by Muratov in \cite{Mur02}. We are therefore led to our initial problem \eqref{eq:Geom-pblem-interface-like-s-perim}, with $  \gamma= \varepsilon^2  \widetilde{\sigma }   $ and $\kappa= \varepsilon \widetilde{\kappa}$.
It is known, see for instance \cite{Mur02, FFMMM15},  that  a stationary set (or critical point) $\Omega$ for the functional $\cP_\gamma$ under volume constraints satisfy the following Euler-Lagrange equation
\begin{align}
\label{eq:main-problem}
H_{\partial\Omega}(x)+\gamma \int_{\Omega}G_\kappa(|x-y|) \mathrm{d}y=\mathrm{Const}. \qquad \textrm{ for all } x\in \partial\Omega,
\end{align}
where $H_{\partial\Omega}$ is the  mean curvature (positive for a ball) of $\partial\Omega$. Here and in the following,
for every set $\Omega$ with $C^2$ boundary (not necessarily bounded), we define
$\cH_\Omega:\partial\Omega\to \R$ with
\begin{align}
\label{eq:def-cH}
\cH_{\Omega}(x)=  H_{\partial\Omega}(x)+\gamma \int_{\Omega}G_\kappa(|x-y|) \mathrm{d}y,\qquad \textrm{ for  $x\in \partial\Omega$}.
\end{align}
In this paper, \textit{domain patterns} are sets $\Omega\subset\R^3$ for which   $\cH_\Omega\equiv \mathrm{const}$ on $\partial\Omega$.  Namely, $\partial\Omega$ is an interface of the sharp Ohta-Kawasaki model of polymer. We are interested in  domains with boundary of Delaunay-type
surface in which the problem \eqref{eq:main-problem} is solvable. More precisely, we are interested in domains of revolution of the form
\begin{align*}
\Omega_\varphi:=\left\{ (t, z)\in \R\times
\mathbb{R}^{2} \,:\, |z|<\varphi(t)  \right\}
\end{align*}
where  $\varphi: \R \to (0,\infty)$ is an even, $2\pi$-periodic and  smooth function. Recall that, a Delaunay surface is a surface of revolution $\partial \Omega_\varphi$ with constant mean curvature. Consequently, when the nonlocal Yukawa interaction is completely dropped out ($\gamma = 0$), the domains $\Omega_\varphi$ enclosed by Delaunay surfaces $\partial \Omega_\varphi$ are solutions to the variational problem \eqref{eq:main-problem} (or stationary interfaces of the energy \eqref{eq:Geom-pblem-interface-like-s-perim}).
Here is our main result.
\begin{theorem}\label{thm:main-thm}
For each $\kappa>0$, there exists $\gamma_*=\gamma_*(\kappa)\in  \Big(0, \, \frac{1}{2\pi} \tfrac{1}{\frac{1}{\kappa} - \frac{1}{\sqrt{\kappa^2+1}}}\Big)$ such that for every $\alpha \in (0,1)$ and $\gamma\in (0,\gamma_*)$, there exists   $\lambda_{*}=\lambda_*(\kappa, \gamma)>0$ and a smooth curve
\begin{align*}
 I: (-\varepsilon_0,\varepsilon_0) &\longrightarrow  (0,\infty) \times C^{2,\alpha}(\R),\quad  s\longmapsto (\lambda_s,u_s)
 \end{align*}
satisfying   $I(0)= (\lambda_0,u_0) \equiv( \lambda_{*},0)$. Moreover,  the  following properties hold.
\begin{enumerate}[$(a)$]
\item For each $s \in (-\varepsilon_0, \varepsilon_0)$,  $u_s\in C^{2,\alpha}(\R)$ is even, $2\pi$-periodic, and takes the form
\begin{align*}
u_s(t) = s\big(\cos(t) + \omega_s(t)\big),
\end{align*}
where $\omega: (-\varepsilon_0, \varepsilon_0) \to C^{2,\alpha}(\mathbb{R})$, $s \mapsto \omega_s$ is a smooth curve satisfying $\omega_0 \equiv 0$, $\omega_s\in C^{2,\alpha}(\R)$ is even, $\omega_s$ is $2\pi$-periodic and satisfies
\begin{align*}
\int_{0}^{2\pi} \omega_s(t) \cos(t) \mathrm{d}t = 0.
\end{align*}
\item Each function $\varphi_s = \lambda_s + u_s$ satisfies  $\varphi_s\in C^{2,\alpha}(\R)$,  $\varphi_s\geq \frac{\lambda_*}{2}$, $\varphi_0=\lambda_*$ and
\begin{align}
\label{eq:nmc-const-mt}
H_{\partial\Omega_{\varphi_s}}(x) + \gamma \int_{\Omega_{\varphi_s}} G_\kappa(|x-y|)  \mathrm{d}y = C_{\lambda_s,\kappa} \qquad \text{for all } x \in \partial\Omega_{\varphi_s}
\end{align}
where the constant $C_{\lambda_s,\kappa}$ satisfies $C_{\lambda_s,\kappa}\leq C_{\lambda_*,\kappa}$ with
\begin{align*}
C_{\lambda_s,\kappa} &=H_{\partial\Omega_{\lambda_s}}\hspace{-0.7ex}
(0,\lambda_s e_1)+\gamma \int_{\Omega_{\lambda_s}}\hspace{-2ex} G_\kappa(|(0,\lambda_s e_1)-z|) \mathrm{d}z=\frac{1}{\lambda_s}
+ 2\gamma\lambda_s^2 \int_{B_1} \hspace{-2ex}K_0(\lambda_s \kappa|e_1-y|)\mathrm{d}y,
\end{align*}
\end{enumerate}
where $e_1=(1,0)\in \R^2$ and  $K_\nu$ is the modified Bessel function of the second kind of order $\nu$.
\end{theorem}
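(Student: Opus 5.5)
We will apply the Crandall--Rabinowitz theorem to the map
\[
F(\lambda,u):=\cH_{\Omega_{\lambda+u}}\big((t,(\lambda+u(t))e_1)\big)-C_{\lambda,\kappa},
\]
with $F:(0,\infty)\times \cO\to C^{0,\alpha}_{p,e}(\R)$. Here $\cO$ is a small neighbourhood of $0$ in the space $C^{2,\alpha}_{p,e}(\R)$ of even, $2\pi$-periodic functions. By rotational symmetry, $\cH_{\Omega_\varphi}$ depends only on $t$, so $F$ is well defined. We also have $F(\lambda,0)=0$ for every $\lambda$, since the mean curvature of the flat cylinder $\Omega_\lambda$ is $1/\lambda$, and its nonlocal term can be computed in cylindrical coordinates. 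Integrating out the axial variable,
\[
\int_\R \frac{e^{-\kappa\sqrt{\tau^2+\rho^2}}}{\sqrt{\tau^2+\rho^2}}\,d\tau=2K_0(\kappa\rho),
\]
and this yields exactly the expression $C_{\lambda,\kappa}$ in (b).

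\textbf{Step 1: smoothness of $F$.} The curvature part is explicit:
\[
H=\frac{1}{\varphi\sqrt{1+\varphi'^2}}-\frac{\varphi''}{(1+\varphi'^2)^{3/2}},
\]
which is smooth from $C^{2,\alpha}$ to $C^{0,\alpha}$ as long as $\varphi>0$. For the nonlocal part we write $\Omega_\varphi$ in polar coordinates around the axis and change variables $y=(t+\tau,\varphi(t+\tau)r\theta)$ with $r\in(0,1)$. This gives an integral over a fixed domain, with an integrable kernel whose singularity is of type $1/|x-y|$ in $\R^3$ and which decays exponentially in $\tau$. Differentiating under the integral sign and using standard potential estimates shows that the map is smooth, and even smooth into $C^{1,\alpha}$. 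So it is a compact perturbation of the curvature.

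\textbf{Step 2: the linearization $L_\lambda=D_uF(\lambda,0)$.} Both parts commute with translations in $t$, so $L_\lambda$ is a Fourier multiplier. On $\cos(kt)$ it acts by
\[
L_\lambda\cos(kt)=\sigma_\lambda(k)\cos(kt),\qquad \sigma_\lambda(k)=\frac{k^2-1}{\lambda^2}+\gamma\,\mu_\lambda(k).
\]
The local part $(k^2-1)/\lambda^2$ comes from the standard cylinder linearization. The nonlocal part $\mu_\lambda(k)$ is the first variation of the potential: a boundary integral of $G_\kappa$ against $\lambda\cos(k(t+\tau))$, minus the normal derivative of the potential of $\Omega_\lambda$ times $\cos(kt)$. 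Via the Fourier transform in $t$ and the $2$D Fourier structure, it should be expressible through $I_\nu K_\nu$ products, for instance
\[
\mu_\lambda(k)=4\pi\lambda\, I_1K_1\big(\lambda\sqrt{k^2+\kappa^2}\big)\text{-type terms}.
\]
The claim to establish is that, for $\gamma<\gamma_*$, the function $\lambda\mapsto\sigma_\lambda(1)$ has a simple zero $\lambda_*$ at which $\partial_\lambda\sigma_\lambda(1)\neq0$, while $\sigma_{\lambda_*}(k)\neq0$ for all $k\neq1$.

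For $k\ge2$ this holds because $(k^2-1)/\lambda^2$ dominates the bounded nonlocal correction when $\gamma$ is small. For $k=0$ we have $\sigma=-1/\lambda^2+\gamma\mu_\lambda(0)$. Its nonvanishing near $\lambda_*$ is where the explicit bound
\[
\gamma_*<\frac{1}{2\pi}\Big(\frac1\kappa-\frac1{\sqrt{\kappa^2+1}}\Big)^{-1}
\]
should enter: the difference $\mu_\lambda(1)-\mu_\lambda(0)$ is controlled by an integral of $e^{-\kappa r}(1-\cos)$-type, producing $2\pi\big(1/\kappa-1/\sqrt{\kappa^2+1}\big)$. One also has the option of working in the codimension-zero setting, where the constant is subtracted and the $k=0$ mode is absorbed into $\lambda$.

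\textbf{Step 3: existence of the zero $\lambda_*$.} At $k=1$ the local part vanishes, so we need $\mu_\lambda(1)=0$, that is, the nonlocal mode-$1$ eigenvalue must change sign in $\lambda$. We will examine its behaviour as $\lambda\to0$ and $\lambda\to\infty$ and apply the intermediate value theorem. If instead a zero of the combination $\lambda^2\gamma\mu_\lambda(1)$ is the relevant one, the same argument applies to that quantity.

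\textbf{Step 4: conclusion.} Given these facts, the kernel of $L_{\lambda_*}$ is spanned by $\cos t$, and the range is its $L^2$-orthogonal complement, so it has codimension one. Transversality $\partial_\lambda L_\lambda\cos t\notin\mathrm{Range}$ follows from $\partial_\lambda\sigma_\lambda(1)|_{\lambda_*}\neq0$. Crandall--Rabinowitz then gives the curve $s\mapsto(\lambda_s,s(\cos t+\omega_s))$ with $\omega_s\perp\cos t$. The bound $\varphi_s\ge\lambda_*/2$ follows by shrinking $\varepsilon_0$, and $C_{\lambda_s,\kappa}\le C_{\lambda_*,\kappa}$ follows from the monotonicity of $\lambda\mapsto C_{\lambda,\kappa}$ near $\lambda_*$ (or from $\lambda_s$ lying on one side of $\lambda_*$).

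\textbf{Main obstacle.} The main difficulty is the explicit spectral analysis in Steps 2--3: computing $\mu_\lambda(k)$ in Bessel form, then proving the sign change at $k=1$ with transversality, and the nonvanishing at the other modes uniformly for $\gamma<\gamma_*$.
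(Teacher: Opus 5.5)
Your overall framework matches the paper's: Crandall--Rabinowitz applied to $(\lambda,u)\mapsto\cF(\lambda+u)-\cF(\lambda)$ around the flat cylinders, with the linearization diagonalized by $\cos(kt)$. However, the spectral analysis, which is the heart of the proof, is both deferred and built on a wrong symbol. The period $2\pi$ of $u$ is fixed independently of $\lambda$. Linearizing the curvature term at $\varphi\equiv\lambda$ therefore gives $w\mapsto -w''-\lambda^{-2}w$, whose symbol on $\cos(kt)$ is $k^2-\lambda^{-2}$, not $(k^2-1)/\lambda^2$. The latter is the symbol for angular modes, or for axial modes of period $2\pi\lambda$. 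Hence the local part does not vanish at $k=1$, and your Step~3 targets the wrong equation when it looks for a sign change of the nonlocal mode-one eigenvalue $\mu_\lambda(1)$ alone. Since $\sigma_\lambda(1)=1-\lambda^{-2}+\gamma\mu_\lambda(1)$, a zero of $\mu_\lambda(1)$ is not a zero of $\sigma_\lambda(1)$ unless $\lambda=1$. For the same reason, your guess that the explicit bound on $\gamma_*$ enters through the $k=0$ mode is misplaced.

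In the paper, $\lambda_*$ is a perturbation of the Plateau--Rayleigh threshold $\lambda=1$. One writes
\[
\sigma_{\lambda,\gamma}(1)=1-\lambda^{-2}+\gamma\lambda\big(-2T^0_0(\lambda\kappa)+2T^0_0(\lambda\sqrt{\kappa^2+1})+T^0_2(\lambda\kappa)\big).
\]
The asymptotics of $T^i_n$ then give $\sigma_{\lambda,\gamma}(1)\to-\infty$ as $\lambda\to0$, and $\sigma_{\lambda,\gamma}(1)\to1-2\pi\gamma\big(\kappa^{-1}-(\kappa^2+1)^{-1/2}\big)$ as $\lambda\to\infty$. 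This limit is positive exactly under the stated upper bound on $\gamma_*$, and that is where the bound enters.

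Transversality comes from the identity $\lambda^3\partial_\lambda\sigma_{\lambda,\gamma}(1)=2-\gamma A(\lambda)$, with $A$ bounded on $(0,\infty)$. The nonvanishing of every other mode, including $k=0$, comes from strict monotonicity of $k\mapsto\sigma_{\lambda,\gamma}(k)$. This is obtained by writing $\sigma_{\lambda,\gamma}(k)=g_{\lambda,\gamma}(k)$ with $g_{\lambda,\gamma}(x)=x^2f_{\lambda,\gamma}(x)-\lambda^{-2}+\gamma\lambda T^0_2(\lambda\kappa)$, where $f_{\lambda,\gamma}$ is nondecreasing and $f_{\lambda,\gamma}(0)\ge1-\gamma A_1>0$.

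None of these three facts is established in your proposal. As written, it therefore does not produce $\lambda_*$, the one-dimensional kernel, or the transversality condition that Crandall--Rabinowitz requires.

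A smaller point: $C_{\lambda_s,\kappa}\le C_{\lambda_*,\kappa}$ needs two things. First, $\lambda\mapsto C_{\lambda,\kappa}$ must decrease near $\lambda_*$. Second, $\lambda_s\ge\lambda_*$, i.e.\ you need the direction of bifurcation, which Crandall--Rabinowitz does not provide. The paper also simply asserts this inequality.
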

In other words,  the unbounded periodic Delaunay-type domains $(\Omega_{\varphi_s})_{s \in (-\varepsilon_0, \varepsilon_0)}$ are stationary sets for the energy functional \eqref{eq:Geom-pblem-interface-like-s-perim} (or equivalently, solutions to the geometric variational problem \eqref{eq:main-problem}). The flat cylinder $\Omega_{\lambda_*}$ provides a particular solution to the geometric variational problem \eqref{eq:main-problem} with constant mean curvature. In fact, all flat cylinders $\Omega_{\lambda}$ with $\lambda > 0$ are also trivial solutions with constant mean curvature to problem \eqref{eq:main-problem}; see Proposition \ref{prop:var-functionals} below.   Note that while equation \eqref{eq:nmc-const-mt} can be rewritten as $\cH_{\Omega_{\lambda_s+u_s}} = \cH_{\Omega_{\lambda_s}}$, this relation allows us to analyze the linearization of $\varphi \mapsto \cH_{\Omega_\varphi}$ around flat cylinders $\Omega_{\lambda}$, $\lambda>0$ and apply the Crandall--Rabinowitz bifurcation theorem to establish the existence of $\varphi_s$ and $\lambda_s$.

Let us mention some works related to ours.  In the absence of nonlocal interaction  that is ($\gamma=0$), the original landmark  paper by C. Delaunay \cite{Del41} classifies all Delaunay surfaces, that is, all solutions to \eqref{eq:main-problem} as surfaces of the form $\Omega_{\varphi}$ generated by revolving the roulettes of conic sections. These surfaces includes six family types: planes, cylinders, spheres, catenoids, unduloids, and nodoids. The paper \cite{Fall18} constructs nontrivial unbounded equilibrium pattern sets of $\cP_\gamma$, with $\frac{1}{2\pi}\frac{1}{\frac{1}{\kappa}-\frac{1}{\sqrt{\kappa^2+1}}} < \gamma <
\frac{1}{2\pi}\frac{1}{\frac{1}{(\kappa^2+1)^{3/2}}}$, that are multiply periodic and bifurcate from lattices of round spheres, straight cylinders, and slabs.
The existence of Delaunay-type periodic hypersurfaces with constant nonlocal fractional mean curvature is established in \cite{CFSW18}; see also \cite{MNT20}, where an analogous study is carried out. More recently, in the context of  in Gamow's liquid drop model, i.e., $\kappa=0$, the existence of Delaunay-like compact equilibria is established in \cite{dPMZ25}. A detailed survey of results examining the connection between Gamow liquid drop models, nonlocal energy minimizers, and diblock copolymer interfaces can be found in \cite{CMT17}. Recent findings by \cite{BMP25} establish the existence of non-spherical minimizers in generalized liquid drop models featuring the truncated (or abruptly screened) Riesz interaction kernel $G(|x|) = \mathds{1}_{\{|x| < \kappa\}} |x|^{-\alpha}$ or the screened Riesz-Yukawa interaction kernel $G(|x|) = e^{-|x|/\kappa} |x|^{-\alpha}$, where $x \in \mathbb{R}^n \setminus \{0\}$ and $\alpha \in (0, n)$.

\par The remainder of this paper is organized as follows. In Section \ref{sec:preliminaries} we recast the problem for pattern-domain of Delaunay-type  $\Omega_\varphi$ driven by H\"older functions $\varphi$. In Section \ref{sec:spectral-decomp}, we analyze the spectrum of the linear operator $L_\lambda = D\cF(\lambda): C^{2,\alpha}(\R)\to C^{0,\alpha}(\R)$, which the Fr\'echet derivative at $\varphi=\lambda$ of the operator $\varphi\mapsto \cF(\varphi) = \cH_{\Omega_\varphi}$, $\varphi>0$. Section \ref{sec:main-result} is devoted to proving Theorem \ref{thm:main-thm}, and Section \ref{sec:different-F} establishes the Fr\'echet $C^\infty$-differentiability of the functional $\cF$.

\medskip
\textit{Acknowledgements.}
The main topic of this work stems from the thesis of
G. Foghem, completed in 2016 under the supervision of M. M. Fall at the African Institute for Mathematical Sciences (AIMS) Cameroon. The authors express their sincere gratitude to AIMS-Cameroon for its hospitality and supportive research environment during the execution of this project.

\section{Preliminaries and Notations}
\label{sec:preliminaries}
%
\subsection{Notations}
Throughout, the notation $|\cdot|$ may denote the Euclidean norm in $\R^2$ or $\R^3$, the Lebesgue measure, or the $(d-1)$-dimensional Hausdorff measure, depending on the context. We shall adopt the following notations.

Given two functions $f,g:\R\to \R$, the notation $f(x)\sim g(x)$ as $x\to x_0$ means $\lim_{x\to x_0}\frac{f(x)}{g(x)}=1$ while by $f(x)\approx g(x)$ as $x\to x_0$, we mean $\lim_{x\to x_0}\frac{f(x)}{g(x)}=c $, for some positive constant $c$.
By $f(x) = O(g(x))$ as $x \to x_0$, we mean that $|f(x)| \leq C|g(x)|$ for some constant $C > 0$ in a neighborhood of $x_0$.

\subsection{Pattern-domains of  Delaunay-type}
In this section, we recast the variational problem \eqref{eq:main-problem} unbounded pattern-domains  whose boundaries are Delaunay-type surfaces driven by a periodic H\"older function.  We focus our investigation on the class of Delaunay-type surfaces generated by the rotation of a \textit{periodic curve}, that is, domains of the form
\begin{align*}
\Omega_\varphi
:=\{ (t, z)\in
\R\times \R^2 \,:\, |z|<\varphi(t)  \}
= \{ (t, \varphi(t) z)   \,:\, t\in \mathbb{R}, ~~z\in B_1 \} \subset\mathbb{R}^3,
\end{align*}
where  $\varphi: \mathbb{R} \to (0,\infty)$ is an even and $2\pi$-periodic function and $B_1$ is the unit ball of $\mathbb{R}^2$. The domain $\Omega_\varphi$ is clearly unbounded periodic domain obtained by rotation of the curve $t\mapsto \varphi(t)$
around the $t$-axis.  It clearly appears that, the boundary $\partial\Omega_\varphi$ is given by
\begin{align}\label{eq:boundary-varphi}
\partial{\Omega_\varphi}=S:= \{(t,x,y)\in\mathbb{R}^3: F(t,x,y):= x^2+y^2-\varphi^2(t) = 0  \}.
\end{align}
The existence of such domain gives a solution of our main equation \eqref{eq:main-problem}.   The mean curvature $H_S$  of a smooth surface $S = \{ (x_1,x_2,x_3) \in \mathbb{R}^3 \,:\, F(x_1,x_2,x_3)=0 \}$ defined by a smooth function $F$, see for instance \cite{Gold05}, is given by
\begin{align}\label{eq:mean-curvature}
H_S =\tau \mbox{div}\Big(\frac{\nabla F}{|\nabla F|}\Big)=\tau \frac{|\nabla F|^2\, \text{Trace}(\mbox{Hess}(F))-\nabla F  \mbox{Hess}(F)  \nabla F^{\mathsf {T}} } { |\nabla F|^3},
\end{align}
where the factor $\tau\in \R$ is to be chosen,   $\nabla F=(F_{x_1}, F_{x_2}, F_{x_3})$ and $\mbox{Hess(F)}= (F_{x_i x_j})_{1\leq i,j\leq 3}$ are respectively the gradient and the Hessian matrix of $F$. Clearly, the sign of the mean curvature depends on the choice of $\pm F$, which corresponds to the orientation of the chosen unit normal vector.
Here, we choose the orientation such that the mean curvature of the unit ball satisfies $H_{\mathbb{S}^2} = 2$, with $ \mathbb{S}^2$ is the unit sphere of $\R^3$, namely $\tau=1$.

\begin{proposition}\label{prop:var-functionals}
 Let $ e_1= (1,0)$ and $\gamma > 0$.  For a $C^2$ function $\varphi: \mathbb{R} \to \mathbb{R} \setminus \{0\}$, the map $$\varphi \mapsto \cF(\varphi) := \cH_{\Omega_\varphi}$$
is defined for every $(t, \varphi(t)z) \in \partial \Omega_\varphi$ with $z \in \mathbb{S}^1 = \partial B_1$, by
\begin{align*}
 \cF(\varphi)(t)= \cH_{\Omega_\varphi} ((t, \varphi(t) z))= \cF_0(\varphi)(t)+ \gamma \cF_1(\varphi)(t)
\end{align*}
where the Yukawa interaction takes the form,
\begin{align}\label{eq:Yukawa-term}
\int_{\Omega_\varphi} \hspace{-1.5ex}G_\kappa(|(t, \varphi(t) z)  -\xi|)\mathrm{d}\xi =\hspace{-1.5ex}\int_\mathbb{R}\int_{B_1} \hspace{-1.5ex} \varphi^2(s) G_\kappa(|(t, \varphi(t)e_1)-(s,\varphi(s)y)|)\mathrm{d} y \mathrm{d}s=:\mathcal{F}_1(\varphi)(t).
\end{align}
While, the mean curvature of $\partial\Omega_\varphi$  at $(t, \varphi(t) z)  \in\partial \Omega_\varphi$ is given by
\begin{align}\label{eq:Mean-Curv}
\begin{split}
H_{\partial\Omega_\varphi}((t, \varphi(t)z) )
&=\frac{-\varphi''}{(1+\varphi'^2)^{\frac{3}{2}}}+\frac{1}{\varphi(1+\varphi'^2)^{\frac{1}{2}}}. =: \mathcal{F}_0(\varphi)(t).
\end{split}
\end{align}
Moreover,  for $\lambda \in (0,\infty)$ the function $t\mapsto \cF(\lambda)(t) = C_{\lambda,\kappa}$ is constant, where
\begin{align*}
C_{\lambda,\kappa} &=H_{\partial\Omega_{\lambda}}((0,\lambda e_1))+\gamma \int_{\Omega_{\lambda}}G_\kappa(|(0,\lambda e_1)-z|) \mathrm{d}z=\frac{1}{\lambda} + 2 \gamma\lambda^2 \int_{B_1} K_0(\lambda\kappa|e_1-y|)\mathrm{d}y.
\end{align*}
\end{proposition}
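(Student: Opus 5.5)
The plan is to prove the three assertions separately: the Yukawa formula \eqref{eq:Yukawa-term}, the mean curvature formula \eqref{eq:Mean-Curv}, and the closed form of $C_{\lambda,\kappa}$.

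\textbf{The Yukawa term.} Parametrize $\Omega_\varphi$ by $\xi=(s,\varphi(s)y)$ with $s\in\R$ and $y\in B_1$. The Jacobian of this map is $\varphi^2(s)$, since for each fixed $s$ the section is a planar dilation by $\varphi(s)$. This yields the double integral in \eqref{eq:Yukawa-term} with the point $(t,\varphi(t)z)$. To replace $z$ by $e_1$, I will use invariance under rotations about the $t$-axis. Choose $R\in SO(2)$ with $Rz=e_1$. Then
\[
|(t,\varphi(t)z)-(s,\varphi(s)y)| = |(t,\varphi(t)e_1)-(s,\varphi(s)Ry)|,
\]
and the change of variables $y\mapsto Ry$ preserves $B_1$. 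Hence $\cF$ depends only on $t$. Absolute convergence follows because $G_\kappa$ is integrable on $\R^3$.

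\textbf{The mean curvature.} Take $F=x^2+y^2-\varphi^2(t)$ in \eqref{eq:mean-curvature}. By the rotational invariance above it is enough to compute at the point $(t,\varphi,0)$. There
\[
\nabla F=(-2\varphi\varphi',\,2\varphi,\,0),\qquad |\nabla F|=2\varphi\sqrt{1+\varphi'^2},
\]
\[
\mathrm{Hess}\,F=\mathrm{diag}\big(-2(\varphi'^2+\varphi\varphi''),\,2,\,2\big),
\]
with all off-diagonal entries zero. Substituting into \eqref{eq:mean-curvature}, simplifying, and fixing the sign with $\tau$ so that the sphere has $H=2$ gives \eqref{eq:Mean-Curv}. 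I will double-check this sign against the cylinder of radius $\lambda$, which must have $H=1/\lambda$. For $\varphi\equiv\lambda$ the formula indeed reduces to $1/\lambda$.

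\textbf{The constant $C_{\lambda,\kappa}$.} For $\varphi\equiv\lambda$ the integral is translation invariant in $t$, so it is constant; I evaluate it at $t=0$. After the substitution $z=\lambda y$ it becomes
\[
\lambda^2\int_{B_1}\int_\R G_\kappa\Big(\sqrt{s^2+\lambda^2|e_1-y|^2}\Big)\,\mathrm{d}s\,\mathrm{d}y .
\]
The inner integral is handled by the classical identity
\[
\int_\R \frac{e^{-\kappa\sqrt{s^2+a^2}}}{\sqrt{s^2+a^2}}\,\mathrm{d}s = 2\int_0^\infty e^{-\kappa a\cosh u}\,\mathrm{d}u = 2K_0(\kappa a),
\]
obtained by setting $s=a\sinh u$ and using the integral representation of $K_0$. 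With $a=\lambda|e_1-y|$ this produces the term $2\gamma\lambda^2\int_{B_1}K_0(\lambda\kappa|e_1-y|)\,\mathrm{d}y$. This last integral is finite because $K_0$ has only a logarithmic singularity at $y=e_1$.

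\textbf{Expected obstacle.} The only delicate point is bookkeeping: getting the sign and orientation convention right in the mean curvature, and justifying Fubini together with the Bessel identity. Both are routine.
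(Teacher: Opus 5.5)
Your proposal is correct and follows the paper's proof essentially step by step. The Yukawa term is handled by the scaling $z'=\varphi(s)y$ plus a rotation of $B_1$, the mean curvature by the level-set formula \eqref{eq:mean-curvature} with $\tau=1$, and the flat-cylinder $s$-integral is reduced to $2K_0(\lambda\kappa|e_1-y|)$. The only deviation is that you get this Bessel identity directly from $s=a\sinh u$ and $K_0(x)=\int_0^\infty e^{-x\cosh u}\,\mathrm{d}u$, whereas the paper obtains it as the $b\to 0$ limit of the cosine-transform formula \eqref{eq:Bessel4} (see \eqref{eq:Bessel4-bis}); either justification is fine.
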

\begin{proof}
Let $\xi = (s, z')\in\Omega_\varphi $ then, $z'\in \varphi(s) B_1$.
Fixing $s\in \R$ and  making   the change of variable $z'=\varphi(s)y' $ with, $y' \in B_1$ so that  $ \mathrm{d}z'=\varphi^2(s)dy'$ implies
\begin{align*}
\int_{\Omega_\varphi} G_\kappa(|(t, \varphi(t) x)  -\xi|)\mathrm{d}\xi
&= \int_\mathbb{R}\int_{\varphi (s)B_1}G_\kappa(|(t, \varphi(t)z)-(s,z')|) \mathrm{d}z'\mathrm{d}s \\
&= \int_\mathbb{R}\int_{B_1}\varphi(s)^2 G_\kappa(|(t, \varphi(t)x)-(s,\varphi(s)y')|)\mathrm{d}y'\mathrm{d}s.
\end{align*}
Since $|e_1|=|z|=1$, there is an orthogonal transformation
$U:\R^2\to \R^2$ such that $z=Ue_1$. Note that  $UU^\mathsf{T}=I_2$  and $|Uw|= |w|$, $w\in \R^2$.
Therefore, letting $y'=Uy$ we get
\begin{align*}
|(t, \varphi(t)z)-(s,\varphi(s)y')|^2 &=(t-s)^2 +| \varphi(t)Ue_1-\varphi(s)Uy|^2
=|(t, \varphi(t)e_1)-(s,\varphi(s)y)|^2.
\end{align*}
The change of variable  $y'=Uy$ with $\mathrm{d}y'= |\det U|\mathrm{d}y= \mathrm{d}y$ implies
\begin{align*}
\int_{\Omega_\varphi} G_\kappa(|(t, \varphi(t) x)  -\xi|)
\mathrm{d}\xi
&= \int_{\mathbb{R}} \int_{B_1}\varphi^2(s)
G_\kappa(|(t, \varphi(t)\mathrm{U}e_1)-(s,\varphi(s)y')|)\mathrm{d}y'\mathrm{d}s\\
&=   \int_\mathbb{R}\int_{B_1} \varphi^2(s) G_\kappa(|(t, \varphi(t)e_1)-(s,\varphi(s)y)|)\mathrm{d}y\mathrm{d}s.
\end{align*}
Now we compute the mean curvature of $
\partial{\Omega_\varphi}$ given in \eqref{eq:boundary-varphi}. A straightforward computation yields $\nabla F(t,x,y)=\big( -2\varphi(t)\varphi'(t),  2x, 2y\big)$ and the Hessian matrix
\begin{align*}
\text{Hess}(F)(t,x,y) =
\begin{pmatrix}
-2(\varphi\varphi')'(t) & 0 & 0 \\
0 & 2 & 0 \\
0 & 0 & 2
\end{pmatrix},
\end{align*}
where $(\varphi\varphi')' = \varphi'^2 + \varphi\varphi''$. We deduce that
\begin{align*}
\text{Trace}(\text{Hess}(F)) &=\Delta F= 4 - 2\big(\varphi'^2 + \varphi\varphi''\big),\\
|\nabla F|^2 &= 4(x^2+y^2+\varphi^2\varphi'^2 ) = 4\varphi^2(1+\varphi'^2 ), \\
\nabla F \,\text{Hess}(F) \,\nabla F^{\mathsf{T}}
&= 8(x^2+y^2 - \varphi^2\varphi'^2 (\varphi'^2 +\varphi\varphi'')) = 8\varphi^2(1-\varphi'^2 (\varphi'^2 +\varphi\varphi'')).
\end{align*}
Substituting all into the mean curvature formula \eqref{eq:mean-curvature} with $\tau=1$ yields
\begin{align*}
H_{\partial\Omega_\varphi}((t, \varphi(t)z) )
=\frac{-\varphi''}{(1+\varphi'^2)^{\frac{3}{2}}}+ \frac{1}{\varphi(1+\varphi'^2)^{\frac{1}{2}}}.
\end{align*}
 Now for $\lambda\in \R$, we clearly have  $\mathcal{F}_0(\lambda)(t) = \frac{1}{\lambda}$ while letting $r= t-s$  yields
\begin{align*}
\cF_1(\lambda)(t)= \lambda^2 \int_{\mathbb{R}}\int_{B_1 }G_\kappa((r^2+\lambda^2| e_1-y|^2)^{\frac{1}{2}}) \, \mathrm{d}y\, \mathrm{d}r =\mathrm{const}.
\end{align*}
So that  using the relation  \eqref{eq:Bessel4-bis} below we get
\begin{align*}
C_{\lambda,\kappa} 
%
%
=\frac{1}{\lambda} + 2 \gamma\lambda^2 \int_{B_1}\int_{0}^{\infty} \frac{e^{-\kappa\sqrt{r^2+\lambda^2| e_1-y|^2}}}{\sqrt{r^2+\lambda^2| e_1-y|^2}}\mathrm{d}r \mathrm{d}y
=\frac{1}{\lambda} + 2 \gamma\lambda^2 \int_{B_1} K_0(\lambda\kappa|e_1-y|)\mathrm{d}y.
\end{align*}
\end{proof}

\begin{remark}[New problem to solve]
\label{rem:new-problem}
Since  $t\mapsto\cF(\lambda)(t)$  is constant for every $\lambda>0$ with $\cF= \cF_0+\gamma\cF_1$,  in view of \eqref{eq:Mean-Curv} and \eqref{eq:Yukawa-term}, to solve the  Euler-Lagrange equation \eqref{eq:main-problem}  associated with the Delaunay-type surface $\Omega_\varphi $, it  is sufficient to show the existence of a function $u: \R\to \R$  and $\lambda>0$ satisfying
\begin{align}\label{eq:new-equation-to solve}
\cF(\lambda+u)=\cF(\lambda),
\end{align}
and take $\varphi=\lambda+ u$ afterwards.
\end{remark}

\noindent The cornerstone of our strategy for the variational problem \eqref{eq:new-equation-to solve} is the Crandall-Rabinowitz bifurcation theorem \cite[Theorem 1.7]{CrRa71}, which we recall here for reader convenience.
\begin{theorem}[{Crandall-Rabinowitz: \cite[Theorem 1.7]{CrRa71}}] \label{thm:crandall}
Let X, Y be Banach spaces, V a neighborhood of 0 in X and $F:(-1,1)\times V\to  Y$
have the properties :
\begin{enumerate}[$(a)$]
\item $F(t,0) = 0 $  for $|t|<1$.
\item The partial derivatives $F_t$, $F_z$ and $F_{tx}$ exist and are continuous,
\item $\ker(F_x(0, 0))$ and $Y\setminus Range(F_x(0, 0))$ are one-dimensional,
\item $F_{tx}(0,0)x_0\notin Range(F_x(0, 0))$, where
\begin{align*}
\ker(F_x(0,0))= \textrm{span}\{x_0\}.
\end{align*}
\end{enumerate}
Let $Z$ be any complement of $\ker(F_x(0, 0))$ in $X$. Then there is a neighborhood $U$ of $(0,0)$ in $\mathbb{R}\times X$, an interval $(-a,a),$ and continuous functions $\varphi: (-a,a) \to  \mathbb{R} $, $\psi: (-a,a) \to  Z $ such that $\varphi(0) =0$, $\psi(0)=0$ and
\begin{align}\label{eq:crandall}
F^{-1}(\{0\})\cap U = \lbrace (\varphi(\alpha), \alpha x_0+\alpha\psi(\alpha)): |\alpha|<a\rbrace\cup \lbrace(t,0): (t, 0)\in U\rbrace.
\end{align}
In other word for $(t,x)\in U$ $F(t,x)= 0$  if and only if $ x=0$  or there is $\alpha\in (-a,a)$ such that
\begin{align*}
t=\varphi(\alpha)
\quad \text{and}\quad
x= \alpha x_0+\alpha\psi(\alpha).
\end{align*}
Moreover, if $F_{xx}$ is also continuous, the functions $\varphi$ and $\psi$ are continuously differentiable.
\end{theorem}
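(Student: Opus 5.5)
The plan is to prove the theorem by the classical Lyapunov--Schmidt/rescaling argument: factor out the bifurcation parameter $\alpha$ along the kernel direction, then solve the remaining equation by a contraction argument (a parametric implicit function theorem) in the variables $(t,z)\in\R\times Z$.

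\textbf{Setup.} Write $X=\mathrm{span}\{x_0\}\oplus Z$ and $L:=F_x(0,0)$. For $(t,\alpha,z)$ near $(0,0,0)$ in $\R\times\R\times Z$, define
\begin{align*}
G(t,\alpha,z):=\int_0^1 F_x\big(t,s\alpha(x_0+z)\big)(x_0+z)\,\mathrm{d}s .
\end{align*}
By (a) and the fundamental theorem of calculus, $G(t,\alpha,z)=\alpha^{-1}F(t,\alpha(x_0+z))$ for $\alpha\neq0$. At $\alpha=0$ we have $G(t,0,z)=F_x(t,0)(x_0+z)$. By (b), $G$ is continuous, and $G(0,0,0)=Lx_0=0$.

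\textbf{The linear operator.} The natural linearization of $G$ in $(t,z)$ at the origin is
\begin{align*}
A(\tau,\zeta)=\tau\,F_{tx}(0,0)x_0+L\zeta .
\end{align*}
This uses $F_x(t,0)=F_x(0,0)+tF_{tx}(0,0)+o(t)$. I claim $A:\R\times Z\to Y$ is an isomorphism.
\begin{itemize}
\item Injectivity: $L|_Z$ is injective, and by (d) $F_{tx}(0,0)x_0\notin\mathrm{Range}(L)$. So $A(\tau,\zeta)=0$ forces first $\tau=0$ and then $\zeta=0$.
\item Surjectivity: $\mathrm{Range}(L)$ has codimension one by (c), and $F_{tx}(0,0)x_0$ spans a complement of it.
\item Boundedness of the inverse: $\mathrm{Range}(L)$ is closed, being of finite codimension and the range of a bounded operator. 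Hence the inverse is bounded by the open mapping theorem.
\end{itemize}

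\textbf{Existence of the branch.} I will solve $G=0$ as the fixed point problem
\begin{align*}
(t,z)=T_\alpha(t,z):=(t,z)-A^{-1}G(t,\alpha,z).
\end{align*}
This step is the main obstacle, because only $F_x$ and $F_{tx}$ (not $F_{xx}$) are assumed continuous. So I cannot simply differentiate $G$ in $z$ and quote the implicit function theorem; instead I estimate differences directly.
\begin{itemize}
\item Increments in $z$: $G(t,\alpha,z)-G(t,\alpha,z')=\int_0^1 F_x(t,s\alpha(x_0+z))(z-z')\,\mathrm{d}s$, up to a term that differs from it by $o(1)|z-z'|$. I will check this by writing $F(t,\alpha(x_0+z))-F(t,\alpha(x_0+z'))$ as an integral of $F_x$ along the segment and dividing by $\alpha$. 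Continuity of $F_x$ then makes this difference $L(z-z')+o(1)|z-z'|$.
\item Increments in $t$: $G(t,\alpha,z)-G(t',\alpha,z)=\int_0^1[F_x(t,\cdot)-F_x(t',\cdot)](x_0+z)\,\mathrm{d}s$. Writing this through $F_{tx}$ and using its continuity gives $(t-t')F_{tx}(0,0)x_0+o(1)|t-t'|$.
\end{itemize}
Thus $T_\alpha$ is a contraction on a small closed ball, uniformly in small $|\alpha|$. Continuity of $G$ in $\alpha$ at the origin keeps the ball invariant, and the uniform contraction principle gives continuous $(\varphi(\alpha),\psi(\alpha))$ with $\varphi(0)=0$ and $\psi(0)=0$. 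Then $x=\alpha(x_0+\psi(\alpha))$ solves $F(\varphi(\alpha),x)=0$.

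\textbf{Local uniqueness.} Let $F(t,x)=0$ with $(t,x)$ small and $x\neq0$. Write $x=\alpha x_0+w$ with $w\in Z$. From $0=F(t,x)=F_x(t,0)x+o(|x|)$ we get $|Lw|=o(|x|)+O(|t||x|)$, so $|w|=o(|\alpha|+|w|)$. Hence $\alpha\neq0$ and $z:=w/\alpha$ is small. Then $G(t,\alpha,z)=0$, and the uniqueness of the fixed point gives $(t,z)=(\varphi(\alpha),\psi(\alpha))$.

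\textbf{Regularity.} If $F_{xx}$ is also continuous, then $G$ is $C^1$ in all variables. The standard implicit function theorem then shows that $\varphi$ and $\psi$ are continuously differentiable.
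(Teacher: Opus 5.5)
The paper does not prove this statement. It quotes it from \cite[Theorem 1.7]{CrRa71} and uses it as a black box in Section~\ref{sec:main-result}. Your argument is correct and is essentially the classical Crandall--Rabinowitz proof. It uses the blow-up $G(t,\alpha,z)=\alpha^{-1}F(t,\alpha(x_0+z))$, extended by $F_x(t,0)(x_0+z)$ at $\alpha=0$, and the isomorphism $(\tau,\zeta)\mapsto \tau F_{tx}(0,0)x_0+L\zeta$ of $\R\times Z$ onto $Y$. It then solves $G=0$ for $(t,z)$ with $\alpha$ as a merely continuous parameter, and gets local uniqueness from the a priori bound $\|w\|=o(|\alpha|)$.

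The one place where you make life harder than necessary is your ``main obstacle'', which does not exist.
\begin{itemize}
\item For $\alpha\neq0$ one has directly $D_zG(t,\alpha,z)=F_x(t,\alpha(x_0+z))$.
\item For $\alpha=0$ the map $z\mapsto F_x(t,0)(x_0+z)$ is affine with derivative $F_x(t,0)$.
\item Hence $D_zG(t,\alpha,z)=F_x(t,\alpha(x_0+z))$ for every $\alpha$. This is jointly continuous with no assumption on $F_{xx}$; it is only your integral representation that would seem to need $F_{xx}$.
\item Likewise, using $F_t(t,0)=0$, the derivative $D_tG(t,\alpha,z)=\int_0^1F_{tx}(t,s\alpha(x_0+z))(x_0+z)\,\mathrm{d}s$ is continuous.
\end{itemize}
So $G$ is continuous with a continuous partial Fr\'echet derivative in $(t,z)$. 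The implicit function theorem with continuous dependence on a parameter then applies verbatim, which is what the classical proof does. Your uniform contraction is just a proof of that version of the theorem, so nothing is lost except length. $F_{xx}$ enters only through $\partial_\alpha G$, exactly as in your last step.

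Two bookkeeping details deserve a line.
\begin{itemize}
\item $Z$ must be a closed complement. Then $\R\times Z$ is a Banach space, the open mapping theorem applies, and $x\mapsto\alpha$ is continuous.
\item $U$ and $a$ must be chosen compatibly, e.g.\ $U=\{(t,\alpha x_0+w): |t|<\varepsilon,\ |\alpha|<a,\ w\in Z,\ \|w\|<\varepsilon\}$ with $a$ small relative to $\varepsilon$. Then the branch lies in $U$, and every nontrivial zero in $U$ automatically has $|\alpha|<a$, so both inclusions in the set identity hold.
\end{itemize}
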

\subsection{Set-up in H\"older spaces}
Our approach to solve the equation \eqref{eq:new-equation-to solve},  requires appropriate Banach space wherein the functional $\varphi \mapsto \cF(\varphi)=\cH_{\Omega_\varphi}$ is Fr\'echet differentiable.
A natural choice for the variational space of the variable $\varphi$ is the H\"older space $C^{\ell,\alpha}(\mathbb{R})$ with
 $$C^{\ell,\alpha}(\mathbb{R})= \{ u\in C(\R^d)\;:\, \|u\|_{C^{\ell;\alpha}(\R)}<\infty\},$$
 is a  Banach space, see for instance \cite{AdFo03}, equip with the norm
\begin{align*}
\|u\|_{C^{\ell,\alpha}(\mathbb{R})} : = \sum_{j=0}^\ell \|u^{(j)}\|_{L^\infty(\mathbb{R})} +  \sup_{\stackrel{s,t \in \mathbb{R}}{s \not = t}}\frac{|u^{(\ell)}(s)-u^{(\ell)}(t)|}{|s-t|^{\alpha}},
\end{align*}
where $\ell \in \mathbb{N}$ and $\alpha \in [0,1]$. The space $C^{\ell,0}(\mathbb{R})$ is endowed with the norm
\begin{align*}
\|u\|_{C^{\ell,0}(\mathbb{R})} : = \sum_{j=0}^\ell \|u^{(j)}\|_{L^\infty(\mathbb{R})}  .
\end{align*}
 It is also  legitimately temping to consider a Sobolev space $W^{m,p}(\R)$ as a natural framework; however, Morrey's embeddings imply that one-dimensional Sobolev spaces--except in the case $p=m=1$--are continuously embedded in H\"older spaces.

Now, for $\alpha\in (1,0)$ we consider the Banach spaces
\begin{align*}
X:= \bigl\{  \varphi \in C^{2,\alpha}(\mathbb{R}) \quad \textrm{is even and $2\pi$-periodic }\big\}, \\
Y:= \bigl\{  \varphi \in C^{0,\alpha}(\mathbb{R}) \quad \textrm{is even and $2\pi$-periodic }\big\},
\end{align*}
equip with the norms $\|\cdot\|_X=\|\cdot\|_{C^{2,\alpha}(\R)}$ and $\|\cdot\|_Y=\|\cdot\|_{C^{0,\alpha}(\R)}$.
It is not difficult to verify that, the periodicity implies that the set $\mathcal{O}\cap X$ with
\begin{align*}
\mathcal{O}:=\{ \varphi \in C^{2,\alpha}(\mathbb{R})\,:\, \varphi>0 \}
\end{align*}
is an open subset of $X$viewed as a Banach space on its own.

\begin{remark}[$\cF$ is well defined]\label{rem:funct-F} The map $\cF: \cO\cap X \to Y$ with
\begin{align}
\cF(\varphi)
&:=\cH_{\Omega_\varphi}= \cF_0(\varphi)+\gamma\cF_1(\varphi),\\
\label{eq:Func-F0}
\mathcal{F}_0(\varphi)(t)&:= \frac{-\varphi''(t)}{(1+\varphi'^2(t))^{\frac{3}{2}}}+ \frac{1}{\varphi(t)(1+\varphi'^2(t))^{\frac{1}{2}}},
\\
\mathcal{F}_1(\varphi)(t) &:=\int_{\mathbb{R}}\int_{B_1 }\varphi^2(s)G_\kappa(|(t, \varphi(t) e_1)-(s,\varphi(s)y)|) \mathrm{d} y \mathrm{d}s,\label{eq:Func-F1}
\end{align}
is well-defined.  Indeed, it is straightforwards to verify that  $\varphi(-t)=\varphi(t)$  for ever $t\in \R$ then  $\cF(\varphi)(-t)= \cF(\varphi)(t)$  for ever $t\in \R$. In addition, $\varphi$ is $2\pi$-periodic i.e., if  $\varphi(t+2\pi)=\varphi(t)$ for ever $t\in \R$ then  $\cF(\varphi)(t+2\pi)= \cF(\varphi)(t)$ for every $t\in \R$.
On the other hand,   note that the H\"older space $C^{\ell,\alpha}(\R)$ is closed under pointwise product multiplication, that is,  if $f, g \in C^{\ell,\alpha}(\R)$, then $f\cdot g\in C^{\ell,\alpha}(\R)$. Therefore, the condition $\varphi \ge \delta > 0$ ensures that the map $\mathcal{F}_0: \mathcal{O} \cap X \to Y$ is well-defined.
Meanwhile, using Leibniz's rule it is not difficult to  show that $\cF_1(\varphi) \in C^{1,0}(\R)$, which in turn implies that $\cF_1(\varphi) \in C^{0,\alpha}(\R)$. Thus the map $\mathcal{F}_1: \mathcal{O} \cap X \to Y$ is well-defined.
\end{remark}

\noindent The proof of the following theorem establishing the differentiability of $\mathcal{F}$ will be postponed to Section \ref{sec:different-F}, as it is quite long and involves heavy computations.
\begin{theorem}[Smoothness of $\cF$]
\label{thm:diff-cF}
The functional $\cF:\cO\cap X\to Y$ is $C^\infty$. Moreover, the Fr\'echet derivative $D\cF(\varphi): X\to Y$ is given by
\begin{align*}
D\cF(\varphi)(w)(t):
&:=  -\frac{w''}{(1+\varphi'^2)^{3/2}} + \frac{3\varphi''\varphi'w'}{(1+\varphi'^2)^{5/2}} -\frac{\varphi'w'}{\varphi (1+\varphi'^2)^{3/2}} -\frac{w}{\varphi^2(1+\varphi'^2)^{1/2}}\\
&\quad +\gamma\int_\mathbb{R} \int_{\mathbb{S}^1}(w(s)-w(t))   G_\kappa(|(t, \varphi(t) e_1)-(s,\varphi(s)\theta)|) \varphi(s) \, \mathrm{d} \theta \mathrm{d}s\\
&\quad +\gamma \frac{w(t)}{2}\int_\mathbb{R} \int_{\mathbb{S}^1} |\theta-e_1|^2   G_\kappa(|(t, \varphi(t) e_1)-(s,\varphi(s)\theta)|) \varphi(s)\, \mathrm{d} \theta \mathrm{d}s,
\end{align*}
where $\mathbb{S}^1= \partial B_1$ is the unit sphere of $\mathbb{R}^2$.
\end{theorem}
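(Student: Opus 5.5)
The plan is to treat $\cF=\cF_0+\gamma\cF_1$ separately. $\cF_0$ is a composition of smooth Nemytskii operators, so it is easy. The substance lies in $\cF_1$, whose kernel is singular and whose integration domain depends on $\varphi$.

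\emph{The local part $\cF_0$.} For $\varphi\in\cO\cap X$ with $\varphi\ge\delta>0$, the map $\varphi\mapsto(\varphi,\varphi',\varphi'')$ is bounded and linear from $X$ into $C^{2,\alpha}\times C^{1,\alpha}\times C^{0,\alpha}$. The maps $(a,b,c)\mapsto -c(1+b^2)^{-3/2}+a^{-1}(1+b^2)^{-1/2}$ are smooth functions on $\{a>\delta/2\}$. Composition with smooth functions (superposition) is $C^\infty$ on $C^{0,\alpha}$, since $C^{0,\alpha}$ is a Banach algebra and Taylor remainders are controlled in the H\"older norm. Hence $\cF_0$ is $C^\infty$. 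Differentiating directly gives
\[
-\frac{w''}{(1+\varphi'^2)^{3/2}}+\frac{3\varphi''\varphi'w'}{(1+\varphi'^2)^{5/2}}-\frac{\varphi'w'}{\varphi(1+\varphi'^2)^{3/2}}-\frac{w}{\varphi^2(1+\varphi'^2)^{1/2}},
\]
which are exactly the first four terms of $D\cF(\varphi)(w)$.

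\emph{The nonlocal part $\cF_1$: rewriting.} The difficulty is that the singularity of $G_\kappa$ at $(s,y)=(t,e_1)$ sits on the boundary of $B_1$, while the dependence on $\varphi$ enters both through $\varphi(t)$ and through $\varphi(s)$. First, return to the geometric form $\cF_1(\varphi)(t)=\int_{\Omega_\varphi}G_\kappa(|X_t-\xi|)\,d\xi$ with $X_t=(t,\varphi(t)e_1)$. Next, use polar coordinates $\xi=(s,r\theta)$ in the cross-section, so that
\[
\cF_1(\varphi)(t)=\int_\R\int_{\mathbb S^1}\int_0^{\varphi(s)}G_\kappa\big(|(t,\varphi(t)e_1)-(s,r\theta)|\big)\,r\,dr\,d\theta\,ds .
\]
Formally differentiating in $\varphi$ in direction $w$ produces two contributions. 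Moving the upper limit gives $\int\int w(s)\varphi(s)G_\kappa(|X_t-(s,\varphi(s)\theta)|)\,d\theta\,ds$. Moving the base point gives $w(t)\int_{\Omega_\varphi}\partial_{e_1}G_\kappa(|X_t-\xi|)\,d\xi$. For the latter I would apply the divergence theorem: since $\partial_{x}G(X_t-\xi)=-\partial_{\xi}G$, it becomes $-w(t)\int_{\partial\Omega_\varphi}G\,(e_1\cdot\nu)$. Finally, I would rewrite $e_1\cdot\nu$ with the area element $\varphi(s)\sqrt{1+\varphi'^2}$, compare with $\theta\cdot e_1=1-\tfrac12|\theta-e_1|^2$, and use the fact that the normal carries a $\varphi'$ contribution. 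This should recover the stated form: $(w(s)-w(t))$ times the kernel, plus $\tfrac{w(t)}2\int\int|\theta-e_1|^2G\varphi$. I would verify this rearrangement carefully, since the $\varphi'$ terms must cancel after an integration by parts in $s$.

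\emph{Rigorous differentiability, and the main obstacle.} The hard step is showing that $\cF_1:\cO\cap X\to Y$ is $C^\infty$ with Fréchet (not just Gateaux) derivatives in the H\"older norm, given the singular kernel. I would localize in two pieces.
\begin{itemize}
\item[(i)] For $|s-t|\ge1$ the kernel is smooth and exponentially decaying. Dominated convergence and Leibniz rules then give $C^\infty$ dependence with values in $C^1$, hence in $C^{0,\alpha}$.
\item[(ii)] For $|s-t|<1$, substitute $s=t+\sigma$ and write $\varphi(t+\sigma)-\varphi(t)=\sigma\int_0^1\varphi'(t+\tau\sigma)d\tau$. The distance squared becomes
\[
\sigma^2+\varphi(t)^2|e_1-\theta|^2+\sigma(\cdots)\,\varphi(t)|e_1-\theta|^2+\sigma^2(\cdots)
\]
with coefficients smooth in $\varphi$. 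Near the diagonal it is comparable to $\sigma^2+|e_1-\theta|^2$, whose inverse square root is integrable on $(-1,1)\times\mathbb S^1$.
\end{itemize}
The integrand then has the form $\Phi\big(\sigma,\theta,\text{difference quotients of }\varphi\big)\cdot(\sigma^2+|e_1-\theta|^2)^{-1/2}$ with $\Phi$ smooth. Its $k$-th derivatives in $\varphi$ remain integrable, because each differentiation only produces polynomial factors times the same homogeneous singularity, and the factor $(w(s)-w(t))=O(|\sigma|)$ absorbs any extra singularity. H\"older continuity in $t$ of the resulting functions follows since the difference quotients are $C^{0,\alpha}$ in $t$ uniformly in $(\sigma,\tau)$. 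I expect the delicate bookkeeping to be the uniform $C^{0,\alpha}$ control of the Taylor remainder $\cF_1(\varphi+w)-\cF_1(\varphi)-D\cF_1(\varphi)w=o(\|w\|_X)$, and I would handle it by this kernel-regularization representation.
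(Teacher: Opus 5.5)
Your proposal is correct and follows essentially the paper's route. You treat $\cF_0$ as a smooth superposition operator, and you obtain the candidate derivative of $\cF_1$ by formal differentiation followed by an integration by parts that reduces it to an integral over $\R\times\mathbb{S}^1$; you use the divergence theorem on $\Omega_\varphi$, whereas the paper pulls back to $\R\times B_1$ and integrates by parts on the cross-section $B_1$. Smoothness then comes from rewriting the kernel through difference quotients $\int_0^1\varphi'(t+\tau\sigma)\,\mathrm{d}\tau$, so that all $\varphi$-derivatives obey $\varphi$-independent integrable bounds, which is the strategy the paper adapts from [CFW18]; the paper closes the Fr\'echet remainder via $\cF_1(\varphi+w)-\cF_1(\varphi)=\int_0^1 D\cF_1(\varphi+\varrho w)(w)\,\mathrm{d}\varrho$.

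One small correction to your hedge: on $\partial\Omega_\varphi$ one has $\big((0,e_1)\cdot\nu\big)\,\mathrm{d}\sigma=\varphi(s)\,(\theta\cdot e_1)\,\mathrm{d}\theta\,\mathrm{d}s$ exactly. So no $\varphi'$ terms appear and no integration by parts in $s$ is needed.
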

\begin{proof}
This a direct consequence of Lemma \ref{lem:diff-f0} and Corollary \ref{cor:diff-f1}.
\end{proof}

\section{Spectrum of the linear operator \texorpdfstring{$L_\lambda := D\mathcal{F}(\lambda)$}{L\_lambda := D F(lambda)}}
\label{sec:spectral-decomp}
In this section, we analyze the spectrum of the derivative of $\mathcal{F}$ at $\lambda > 0$, which according to Theorem \eqref{thm:diff-cF} is the linear operator $L_\lambda := D\mathcal{F}(\lambda): C^{2,\alpha}(\R)\to C^{0,\alpha}(\R)$ given by
\begin{align}
\label{eq:reg-func-cyliner}
\begin{split}
L_\lambda(w)(t)& =-w''(t)- \frac{1}{\lambda^2 } w(t)\\
&\quad -\gamma  \lambda \int_{\R}\int_{\mathbb{S}^1 } (w(t)-w(t-r)) G_\kappa \left((r^2+\lambda^2|e_1-\theta|^2)^{1/2} \right)\mathrm{d}r  \mathrm{d}\theta \\
&\quad +\frac{\gamma \lambda }{2} w(t) \int_{\R}\int_{\mathbb{S}^1}|e_1-\theta|^2 G_\kappa\left((r^2+\lambda^2|e_1-\theta|^2)^{1/2} \right) \mathrm{d}r  \mathrm{d}\theta .
\end{split}
\end{align}

\subsection{Recap on modified Bessel functions of the second kind} \label{sec:bessel-funct}
The spectral analysis of the operator $L_\lambda$ relies heavily on modified Bessel functions of the second kind.  We gather some of their essential properties from
\cite{BaEr74,GrRy07,Leb72} for the reader convenience.

\begin{definition}
In general, a function $w$ is said to be a modified Bessel function of order $\nu\in \mathbb{C}$ if it satisfies the differential equation
\begin{align*}
z^2\frac{d^2 w}{\mathrm{d}z^2} +z\frac{d w}{\mathrm{d}z} -(z^2+\nu^2)w=0.
\end{align*}
The modified Bessel functions of first kind $I_\nu$ and of second kind $K_\nu$ of order $\nu$ are
\begin{align*}
I_\nu(z) = \sum_{m=0}^{\infty} \frac{1}{m!\, \Gamma(m+\nu+1)}\big(\frac{z}{2}\big)^{2m+\nu}
\quad \text{and}\quad
K_\nu(z) =  \frac{\pi}{2}\frac{I_{-\nu}(z)-I_\nu(z)}{\sin(\nu\pi)},
\end{align*}
where  $K_n(z) = \lim\limits_{\nu \to n} K_\nu(z)$  for $n\in\mathbb{N}$ and  $\Gamma(x) =  \int_{0}^{\infty} t^{x-1} e^{-t}\mathrm{d}t$ is the Euler gamma function.
\end{definition}
\noindent According to \cite[p. 927, WA 95(20)]{GrRy07},
$I_\nu$ and $K_\nu$ are  linked by the relationship
\begin{align*}
I_\nu (z) K_{\nu+1} (z) + I_{\nu+1}(z) K_{\nu} (z) = \frac{1}{z}.
\end{align*}
\noindent For $\nu\in \mathbb{R}$, from \cite[p. 929, WA 93(3)]{GrRy07}, see also \cite{BaEr74}  we have
\begin{align}
&  r K'_\nu(r) +\nu K_\nu(r) = -r K_{\nu-1}(r)\quad\text{and} \quad K_{-\nu} = K_\nu.\label{eq:Bessel1}
\end{align}
In particular, taking $\nu = 0, 1$ relation \eqref{eq:Bessel1} yields,
\begin{align}
\label{eq:deriv-K10}
K'_0(r) = -K_1(r) \quad\text{and}\quad (rK_1)'(r) = -rK_0(r).
\end{align}
We are interested in the features of $K_0$ and $K_1$ which also have the following integral representation formulas.  Let $\gamma>0$, $\beta>0$ and $b>0$. It is shown in
\cite[p. 491, ET I 75(27), ET I 75(36)]{GrRy07} that the following relations holds
\begin{align}
\int_0^{\infty} \frac{e^{-\beta\sqrt{x^2+\gamma^2}}}{\sqrt{x^2+\gamma^2}}\sin(bx)\mathrm{d}x &=\frac{\gamma b}{\sqrt{\beta^2+b^2}} K_1\big(\gamma\sqrt{\beta^2+b^2}\big),  \label{eq:Bessel3}\\
\int_0^{\infty} \frac{e^{-\beta\sqrt{x^2+\gamma^2}}}{\sqrt{x^2+\gamma^2}}\cos(bx)\mathrm{d}x& =  K_0\big(\gamma\sqrt{\beta^2+b^2}\big).\label{eq:Bessel4}
\end{align}
In addition, by  \cite[p. 660, 12*]{GrRy07}  and \cite[p. 665, 5* \& 10*]{GrRy07}, for $b>0$ we have
\begin{align}
\int_0^{\infty} K_0(bx)\mathrm{d}x =  \frac{\pi}{2 b}, \quad \int_0^{\infty} xK_1(bx)\mathrm{d}x =  \frac{\pi}{2 b^2}\quad\text{and}\quad \int_0^{\infty} x^2 K_0(bx)\mathrm{d}x =  \frac{\pi}{2 b^3}.\label{eq:Bessel5}
\end{align}
From  \cite[p. 920,WA 231, 245(9)]{GrRy07} we find that
for all  $\nu \geq 0$,
\begin{align*}
K_\nu(r) = \sqrt{\frac{\pi}{2r}} e^{-r} \Big( 1 + \frac{4\nu^2-1}{8r} + \frac{(4\nu^2-1)(4\nu^2-9)}{128r^2} + O(r^{-3}) \Big)
\quad \text{as $r \to \infty$}.
\end{align*}
\noindent From  \cite[p. 919 ,WA 95(14)]{GrRy07} and  \cite[p. 919 ,WA 95(15)]{GrRy07}  it follows that
\begin{align} \label{eq:Bessel-asympK0}
K_0(r) &= -\log\left(\frac{r}{2}\right) - \gamma_e - \frac{r^2}{4}\log\left(\frac{r}{2}\right)
+ \frac{r^2}{4}(1 - \gamma_e) + O(r^4 \log(r)) \quad \text{as $r\to 0$}, \\
\label{eq:Bessel-asympK1}
K_1(r) &= \frac{1}{r} + \frac{r}{2}  \log\left(\frac{r}{2}\right) + \frac{r}{2} (\gamma_e- \frac{1}{2})+ O(r^3 \log(r)) \quad\quad  \text{as $r\to 0$}.
\end{align}
where $\gamma_e= -\Gamma'(1)$ is the Euler-Mascheroni constant. In particular, we have
\begin{align}
&\label{eq:decKnuInf0}
K_{0}(r)\sim -\log (r) \quad \text{and}\quad K_{1}(r)\sim   \frac{1}{r} \qquad\textrm{as \,\, $r\to 0$},
\\
\label{eq:decKnuInf}
&K_\nu(r) \sim \sqrt{\frac{\pi}{2r}} e^{-r} \qquad \text{as }\quad r\to \infty.
\end{align}
The  relations  \eqref{eq:Bessel4}, \eqref{eq:deriv-K10}, \eqref{eq:decKnuInf0} and \eqref{eq:decKnuInf} imply that  $K_i(r)>0$, $i=0,1$ for $r>0$.
We will need the following function.
\begin{definition}
Let $e_1 =(1,0)$  and  $\mathbb{S}^1= \partial B_1$ be the unit sphere of $\mathbb{R}^2$. For  $i=0,1$ and $n\geq i$  we define the function $\beta\mapsto T^i_n(\beta)$, $\beta>0$ by
\begin{align}
\label{eq:def-Tn1-Tn0}
T_n^i(\beta)=\int_{\mathbb{S}^1}|\theta-e_1|^n K_i(\beta|\theta-e_1|)\mathrm{d}\theta.
\end{align}
Note that, for $\theta =  (\theta_1, \theta_2) \in \mathbb{S}^1$, since  $|\theta - e_1|^2 = 2(1-\theta\cdot e_1)= 2(1-\theta_1)$, by a change of variable over the sphere $\mathbb{S}^1$, see for instance \cite[Appendix D.3]{Gra14}, together with the change of variavles $r= \beta\sqrt{2(1-\theta_1)}$ so that
$ \mathrm{d} \theta_1 = -\beta^{-2}rdr$, we have
\begin{align}
\begin{split}
T^i_n(\beta) &= 2 \int_{-1}^{1}(2(1-\theta_1))^{n/2}  (1-\theta_1^2)^{-1/2} K_i(\beta \sqrt{2(1-\theta_1)}) \mathrm{d} \theta_1\\
&=  4 \beta^{{-n-1}}\int_{0}^{2\beta } \big(4-\frac{r^2}{\beta^2}\big)^{-1/2} r^n K_i(r) \mathrm{d}r.  \label{eq:Expres-Tn}
\end{split}
\end{align}
Furthermore, using the formulas in \eqref{eq:Bessel1} we easily find that
\begin{align}\label{eq:derivat-Tin}
(-T^0_n)'(\beta)=  T^1_{n+1}(\beta)
\quad\text{ and }\quad
(-T^1_n)'(\beta)=  T^0_{n+1}(\beta)+\beta^{-1}T^1_{n}(\beta).
\end{align}
\end{definition}
\noindent We now provide asymptotic properties of $ T^i_n$.
\begin{lemma}\label{lem:sharp-est-sph-int-Bessel}
For $i\in \{0,1\}$ we let $n\geq i$ and define
\begin{align*}
B_n = \frac{\sqrt{\pi}\,\Gamma\left(\frac{n}{2}\right)}{2\,\Gamma\left(\frac{n+1}{2}\right)} \qquad \text{and} \qquad C^i_n = \int_{0}^\infty r^{n} K_i(r) \mathrm{d}r.
\end{align*}
Then, the following asymptotic limits hold
\begin{align*}
&\lim_{\beta\to \infty} \beta^2(\beta^{n+1}T^i_n(\beta)- 2C^i_n)= 2^{-2}C^i_{n+2},\\
&\lim_{\beta\to 0} (T^1_n(\beta)-2^{n+1}B_n \beta^{-1})=0,\\
&\lim_{\beta\to 0} (T^0_n(\beta)+ 2^{n+2}B_{n+1} (\log(\beta)+\gamma_e))=-2^{n+2}\frac{d}{d n} B_{n+1}.
\end{align*}
In particular we find that
\begin{align}
\label{eq:est-T10-b-infty}
T_n^i(\beta) &\sim  2\beta^{-n-1}C_n^i  +2^{-2}\beta^{-n-3}C_{n+2}^i
&&\textrm{ as $\beta\to \infty$},\\
\label{eq:est-T1-b0}
T^1_n(\beta) &\sim 2^{n+1}B_n\beta^{-1}\quad \text{and}\quad T^0_n(\beta) \sim
-2^{n+2} B_{n+1}\log(\beta)
&&\textrm{ as $\beta\to 0$}.
\end{align}
\end{lemma}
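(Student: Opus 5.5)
The proof will work directly from the one-dimensional representation \eqref{eq:Expres-Tn},
\begin{align*}
T^i_n(\beta)=4\beta^{-n-1}\int_0^{2\beta}\Big(4-\frac{r^2}{\beta^2}\Big)^{-1/2} r^nK_i(r)\,\mathrm{d}r,
\end{align*}
and treat the two regimes $\beta\to\infty$ and $\beta\to0$ separately.

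\emph{Regime $\beta\to\infty$.} Write
\begin{align*}
\Big(4-\frac{r^2}{\beta^2}\Big)^{-1/2}=\frac12\Big(1-\frac{r^2}{4\beta^2}\Big)^{-1/2}
\end{align*}
and set $x=r^2/(4\beta^2)\in[0,1)$. Then
\begin{align*}
\beta^{n+1}T^i_n(\beta)-2C^i_n
=2\int_0^{2\beta}\big[(1-x)^{-1/2}-1\big]r^nK_i\,\mathrm{d}r-2\int_{2\beta}^\infty r^nK_i\,\mathrm{d}r .
\end{align*}
By \eqref{eq:decKnuInf}, the tail integral is $O(\beta^{n-1/2}e^{-2\beta})$, so it vanishes after multiplication by $\beta^2$.

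For the first integral we use that $(1-x)^{-1/2}-1=\tfrac{x}{2}g(x)$, where $g\ge 1$ is increasing and $g(0)=1$. This gives
\begin{align*}
\beta^2\cdot 2\int_0^{2\beta}\tfrac{r^2}{8\beta^2}\,g\,r^nK_i\,\mathrm{d}r=\frac14\int_0^{2\beta} g\!\Big(\frac{r^2}{4\beta^2}\Big)r^{n+2}K_i\,\mathrm{d}r .
\end{align*}
We split at $r=\beta$. On $[0,\beta]$ we have $g\le g(1/4)$, and $g\to1$ pointwise, so dominated convergence yields $\tfrac14C^i_{n+2}$. On $[\beta,2\beta]$, $g$ has an integrable singularity $\sim 2(1-x)^{-1/2}/x$. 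There the factor $r^{n+2}K_i(r)\le C\beta^{n+2}e^{-\beta}$ while
\begin{align*}
\int_\beta^{2\beta}\big(1-r^2/4\beta^2\big)^{-1/2}\mathrm{d}r=O(\beta),
\end{align*}
so this contribution tends to $0$. This proves the first limit, and \eqref{eq:est-T10-b-infty} follows from it. The main technical point of this regime is the endpoint singularity, which is killed by the exponential decay of $K_i$.

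\emph{Regime $\beta\to0$, case $i=1$.} Substitute $r=2\beta\sin\phi$, so that
\begin{align*}
(4-r^2/\beta^2)^{-1/2}\mathrm{d}r=\beta\,\mathrm{d}\phi,
\end{align*}
and hence
\begin{align*}
T^i_n(\beta)=2^{n+2}\int_0^{\pi/2}\sin^n\!\phi\;K_i(2\beta\sin\phi)\,\mathrm{d}\phi .
\end{align*}
By \eqref{eq:Bessel-asympK1}, $K_1(z)-1/z=O(z|\log z|)$, so
\begin{align*}
T^1_n(\beta)=2^{n+2}\frac{1}{2\beta}\int_0^{\pi/2}\sin^{n-1}\phi\,\mathrm{d}\phi+O(\beta|\log\beta|).
\end{align*}
For $n\ge1$ the Wallis integral equals $\int_0^{\pi/2}\sin^{n-1}\phi\,\mathrm{d}\phi=B_n$. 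This gives $T^1_n(\beta)-2^{n+1}B_n\beta^{-1}\to0$.

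\emph{Regime $\beta\to0$, case $i=0$.} By \eqref{eq:Bessel-asympK0},
\begin{align*}
K_0(z)=-\log(z/2)-\gamma_e+O(z^2|\log z|),
\end{align*}
and here $\log(z/2)=\log\beta+\log\sin\phi$. Therefore
\begin{align*}
T^0_n(\beta)=-2^{n+2}(\log\beta+\gamma_e)\int_0^{\pi/2}\sin^n\phi\,\mathrm{d}\phi-2^{n+2}\int_0^{\pi/2}\sin^n\phi\log\sin\phi\,\mathrm{d}\phi+o(1).
\end{align*}
The first integral is $B_{n+1}$. For the second, differentiate under the integral sign in $n$:
\begin{align*}
\frac{d}{dn}\int_0^{\pi/2}\sin^n\phi\,\mathrm{d}\phi=\int_0^{\pi/2}\sin^n\phi\log\sin\phi\,\mathrm{d}\phi .
\end{align*}
So the second integral equals $\frac{d}{dn}B_{n+1}$. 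This is justified because the $\Gamma$-quotient formula for $B_{n+1}$ holds for real $n$ and $\sin^n\phi\,|\log\sin\phi|$ is integrable. The result is exactly the third limit, and \eqref{eq:est-T1-b0} follows since $\log\beta\to-\infty$.

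The hardest step is the endpoint control in the $\beta\to\infty$ regime. In the small-$\beta$ regime the only subtlety is checking that the error terms are uniformly integrable in $\phi$, which holds since $\sin\phi\le1$.
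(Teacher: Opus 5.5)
Your proof is correct and follows essentially the same route as the paper. For $\beta\to\infty$ you split into a bulk region handled by dominated convergence and an endpoint region killed by the exponential decay of $K_i$; your factor $g$ is just the paper's $\varrho$-integral from the fundamental theorem of calculus, and you cut at $r=\beta$ instead of $r=\sqrt{2}\beta$. For $\beta\to0$ your substitution $r=2\beta\sin\phi$ is the paper's $r=2\beta z$ composed with $z=\sin\phi$, followed by the same small-argument expansions of $K_0,K_1$ and the same identification of $\int_0^{\pi/2}\sin^n\phi\,\log\sin\phi\,\mathrm{d}\phi$ with $\frac{d}{dn}B_{n+1}$; the only differences are cosmetic, e.g.\ explicit $O(\beta|\log\beta|)$ error bounds where the paper invokes dominated convergence.
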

\begin{proof}
According to \eqref{eq:decKnuInf} we have
$ r^n K_i(r) \sim \sqrt{\frac{\pi}{2}} r^{n-\frac{1}{2}} e^{-r}$ as  $r\to \infty$, this implies that  $r\mapsto r^n K_i(r)$ for  $n\geq i$, is integrable on $[1,\infty).$ Now, if $i =0 $ then, $ n\geq 0$ so, the relation \eqref{eq:decKnuInf0} yields $r^nK_0(r) \sim -r^n\log(r)\in L^1(0,1)$ and hence $ r^nK_0(r)\in L^1(0,1)$ for $n\geq 0$. If  $i=1$ we have $n\geq 1$. The relation \eqref{eq:decKnuInf0} implies  $r^nK_1(r) \sim r^{n-1}$, we deduce that $r^n K_1(r)\in L^1(0, 1)$ for $n\geq 1$.   Thus we have shown that $C_n^i= \int_0^{\infty} r^nK_i(r)\mathrm{d}r$ converges for $n\geq i$. Next for the relation \eqref{eq:Expres-Tn} we can write
\begin{align}
\label{eq:decomp-Tin}
T^i_n(\beta) &= 4 \beta^{{-n-1}} T^i_{n,1}(\beta)+ 4\beta^{{-n-1}} T^i_{n,2}(\beta),
\end{align}
where define the terms $T^i_{n,1}(\beta)$ and $ T^i_{n,2}(\beta)$ by
\begin{align*}
T^i_{n,1}(\beta)
&= \int_{0}^{\sqrt{2}\beta} \big(4-\frac{r^2}{\beta^2}\big)^{-1/2} r^nK_i(r)
\mathrm{d}r,
\\
T^i_{n,2}(\beta)
&=\int_{\sqrt{2}\beta}^{2\beta} \big(4-\frac{r^2}{\beta^2}\big)^{-1/2}r^nK_i(r)
\mathrm{d}r.
\end{align*}
First of all, observe that $r\approx\beta$ for $r\in[\sqrt{2}\beta,2\beta]$ and $\beta $ large enough, the equivalence  \eqref{eq:decKnuInf} implies that $K_i(r)\leq c r^{-\frac12} e^{-r}\leq c \beta^{-\frac12} e^{-\sqrt{2}\beta }$  for some generic constant $c>0$. Hence we have
\begin{align*}
T^i_{n,2}(\beta) = \int_{\sqrt{2}\beta}^{2\beta}r^n \big(4-\frac{r^2}{\beta^2}\big)^{-1/2}K_i(r)\mathrm{d}r& \leq c   \beta^{{n-3/2}} \exp(-\sqrt{2}\beta) \int_{2\beta}^{\sqrt{2}\beta}  r\big(4-\frac{r^2}{\beta^2}\big)^{-1/2}\mathrm{d}r\\
&= c\sqrt{2}\beta^{{n+1/2}}\exp(-\sqrt{2} \beta).
\end{align*}
This estimate readily implies that
\begin{align}\label{eq:Expres-Tn2q}
\lim_{\beta\to \infty} \beta^q T^i_{n,2}(\beta)= 0\quad\qquad \textrm{for all $q\in \R$}.
\end{align}
In order to evaluate $T^i_{n,1}(\infty) = \lim\limits_{\beta\to\infty} T^i_{n,1}(\beta)$, we note that the function
\begin{align*}
f(r,\beta ) = \big(4-\frac{r^2}{\beta^2}\big)^{-1/2}r^n K_i(r)\mathds{1}_{[0,\sqrt{2}\beta]}(r),
\end{align*}
satisfies $f( r,\beta) \to
\frac{1}{2} r^n K_i(r)$ as $\beta\to \infty$ for $r>0$ and, since $\frac{1}{2}<\big(4-\frac{r^2}{\beta^2}\big)^{-1/2}<1$ for
$r\in [0,\sqrt{2}\beta]$, we have   $|f(r,\beta) |\leq  r^nK_i(r)\in L^1(0, \infty)$.  Hence we have   $r\mapsto r^nK_i(r)\in L^1(0, \infty)$ and the convergence dominated theorem implies
\begin{align*}
T^i_{n,1}(\infty)
&=\lim_{\beta\to \infty} \int_{0}^{\infty} \hspace*{-1ex}   \frac{r^n K_i(r)}{\big(4-\frac{r^2}{\beta^2}\big)^{1/2}} \mathds{1}_{[0,\sqrt{2}\beta]}(r)\mathrm{d}r=\frac12 \int_{0}^{\infty} \hspace*{-1ex} r^n K_i(r)\mathrm{d}r= \frac{C_n^i}{2}.
\end{align*}
Using this and fundamental theorem of calculus gives
\begin{align*}
T^i_{n,1}(\beta)&-T^i_{n,1}(\infty)
%
%
= \frac{1}{2\beta^2}\int_{0}^{\sqrt{2}\beta } \int_0^1 \big(4-\frac{r^2}{\beta^2}\varrho\big) ^{-3/2} r^{n+2} K_i(r)\mathrm{d}\varrho \mathrm{d}r
-\frac{1}{2}\int^{\infty}_{\sqrt{2}\beta } r^{n} K_i(r) \mathrm{d}r.
\end{align*}
Since $2 < 4 - (r^2/\beta^2)\varrho < 4$ for all $0 < r < \sqrt{2}\beta$ and $\varrho \in [0,1]$, applying, once again the convergence dominated theorem  as for $T^i_{n,1}(\infty)$, we obtain
\begin{align*}
\lim_{\beta\to \infty} \beta^2(T^i_{n,1}(\beta)-T^i_{n,1}(\infty))
&= 2^{-4}  \int^{\infty}_{0 }  r^{n+2} K_i(r)\mathrm{d}r = 2^{-4} C_{n+2}^i.
\end{align*}
Consequently, taking into account   \eqref{eq:Expres-Tn2q}  and the fact that $4T^i_{n,1}(\infty)= 2C^i_n$, we get
\begin{align*}
\lim_{\beta\to \infty} \beta^2(\beta^{n+1}T^i_n(\beta)- 2C^i_n)= \lim_{\beta\to \infty} \Big[4\beta^2(T^i_{n,1}(\beta)-T^i_{n,1}(\infty)) +4\beta^2T^i_{n,2}(\beta)\Big]= 2^{-2}C^i_{n+2}.
\end{align*}
From the foregoing, we deduce that
\begin{align*}
T^i_{n}(\beta)\sim 4\beta^{-n-1}T^i_{n,1}(\beta) \sim  2\beta^{-n-1}C_n^i + 2^{-2}\beta^{-n-3}C_{n+2}^i \quad\textrm{ as $\beta\to \infty$}.
\end{align*}
Now, we want to show \eqref{eq:est-T1-b0}. The change of variable implies  $r=2z\beta$ gives
\begin{align*}
T^i_n(\beta) &=  4 \beta^{-n-1} \int_{0}^{2\beta }\hspace{-1ex} \big(4-\frac{r^2}{\beta^2}\big)^{-1/2} r^n K_i(r) \mathrm{d}r =2^{n+2}\int_0^1 (1-z^2)^{-1/2} z^nK_i(2\beta z)\,\mathrm{d}z.
\end{align*}
By the duplication formula for the Gamma and Beta functions we  obtain
\begin{align*}
B_n=\int_0^1 \frac{z^{n-1}\mathrm{d}z }{ (1-z^2)^{1/2}}= \frac{1}{2} \int_0^1 t^{\frac{n}{2} - 1} (1-t)^{-\frac{1}{2}} \, \mathrm{d}t=\frac{1}{2} B\Big(\frac{n}{2}, \frac{1}{2}\Big) =\frac{\sqrt{\pi}\Gamma\left(\frac{n}{2}\right)}{2  \Gamma\left(\frac{n+1}{2}\right)}.
\end{align*}
The relations \eqref{eq:Bessel-asympK0} and
\eqref{eq:Bessel-asympK1}  together with the dominated convergence theorem imply
\begin{align*}
\lim_{\beta\to 0} (T^1_n(\beta)-2^{n+1}B_n \beta^{-1})=\lim_{\beta\to 0} \int_0^1 (1-z^2)^{-1/2} z^n\big[K_1(2\beta z) - \frac{1}{2\beta z} \big] \mathrm{d}z=0\quad n\geq 1,\\
\lim_{\beta\to 0}\int_0^1 (1-z^2)^{-1/2} z^n\big[K_0(2\beta z) +\log (\beta z)+ \gamma_e\big] \mathrm{d}z= 0\quad n\geq 0.
\end{align*}
In turn, since $\log(\beta z)= \log(\beta )+\log(z)$, the second limit equivalently shows\footnotemark that
\footnotetext{For $a>0$, we have $
\int_0^1 z^n \log(az) (1-z^2)^{-1/2} \mathrm{d}z = \tfrac{\sqrt{\pi} \, \Gamma\left(\frac{n+1}{2}\right)}{2  \Gamma\left(\frac{n+2}{2}\right)}
\left[ \log(a) + \tfrac{1}{2} \left( \psi\big(\tfrac{n+1}{2}\big) - \psi\big(\tfrac{n+2}{2}\big) \right) \right]
$ where $\psi=\frac{\Gamma'}{\Gamma}$ is the Digamma function.}
\begin{align*}
\lim_{\beta\to 0} (T^0_n(\beta)+ 2^{n+2}B_{n+1} (\log(\beta)+&\gamma_e))= -2^{n+2}\int_0^1\frac{z^n\log (z)\mathrm{d}z }{(1-z^2)^{1/2} }
= -2^{n+2}\frac{d}{d n} B_{n+1}.
\end{align*}
In particular, we readily  find that $T^1_n(\beta) \sim 2^{n+1}B_n\beta^{-1}$ and $T^0_n(\beta) \sim
-2^{n+2} B_{n+1}\log(\beta)$.
\end{proof}

\subsection{Eigenvalues and eigenfunctions of \texorpdfstring{$L_\lambda$}{L-lambda}}\label{subsec:spectral-decomp}
 The next lemma aims to investigate the spectrum of the linear operator  $L_\lambda: X\to Y$ given in \eqref{eq:reg-func-cyliner}.
\begin{lemma}\label{lem:def-linearized-eigen}
Let $\lambda > 0$. The functions  $e_k(t) := \cos(kt)$ and $\widetilde{e}_k(t):= \sin(kt)$ for $k \in \mathbb{N}$ are eigenfunctions of $L_\lambda$ To be more precise we have  \begin{align*}
L_\lambda(e_k) = \sigma_{\lambda, \gamma}(k) e_k\quad\text{and}\quad L_\lambda (\widetilde{e}_k)
(t)= \sigma_{\lambda,\gamma}(k) \, \widetilde{e}_k(t)
\end{align*}
where the eigenvalue $\sigma_{\lambda ,\gamma}(k)$ is given by
\begin{align*}
\sigma_{\lambda ,\gamma}(k)
=  & k^2- \frac{1}{\lambda ^2}\\
 & - 2 \gamma \lambda \int_{\mathbb{S}^1}   K_0( \lambda  |e_1-\theta |\kappa )
\mathrm{d} \theta + 2\gamma \lambda\int_{\mathbb{S}^1}
K_0( \lambda  |e_1-\theta |\sqrt{  \kappa^2+  k^2 } )\mathrm{d} \theta
\\
&+\frac{\gamma \lambda }{2}
\int_{\mathbb{S}^1}  |e_1-\theta |^2 K_0( \lambda  |e_1-\theta | \kappa ) \mathrm{d} \theta.
\end{align*}
In short,  with the notation \eqref{eq:derivat-Tin} we have
\begin{align}
\label{eq:new-eigen-val}
\sigma_{\lambda ,\gamma}(k)=k^2- \frac{1}{\lambda ^2}-2\gamma \lambda T^0_0(\lambda  \kappa)+2\gamma\lambda T^0_0(\lambda \sqrt{\kappa^2+k^2}) +\gamma\lambda  T^0_2(\lambda  \kappa).
\end{align}
\end{lemma}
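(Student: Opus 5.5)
The plan is to plug $w=e_k$ (and $w=\widetilde e_k$) into the explicit formula \eqref{eq:reg-func-cyliner} for $L_\lambda$ and evaluate each of its three pieces. We then reduce the $r$-integrals to modified Bessel functions through the integral representation \eqref{eq:Bessel4}.

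The local part is immediate: $-e_k''-\lambda^{-2}e_k=(k^2-\lambda^{-2})e_k$.

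For the last term, the $r$-integral does not involve $w$. Since the integrand is even in $r$, we write it as twice the integral over $(0,\infty)$. Applying \eqref{eq:Bessel4} with $b=0$, $\beta=\kappa$, and $\gamma$ replaced by $\lambda|e_1-\theta|$ gives
\begin{align*}
\int_{\R}G_\kappa\big((r^2+\lambda^2|e_1-\theta|^2)^{1/2}\big)\,\mathrm{d}r=2K_0(\lambda\kappa|e_1-\theta|).
\end{align*}
The last term is therefore $\gamma\lambda\, w(t)\,T^0_2(\lambda\kappa)$, by definition \eqref{eq:def-Tn1-Tn0}.

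For the nonlocal difference term, we expand $w(t-r)$. For $w=e_k$ we have $\cos(k(t-r))=\cos(kt)\cos(kr)+\sin(kt)\sin(kr)$. The $\sin(kr)$ contribution vanishes because the kernel is even in $r$. For $w=\widetilde e_k$ the expansion is $\sin(kt)\cos(kr)-\cos(kt)\sin(kr)$, and the same parity argument removes the $\sin(kr)$ part. In both cases the bracket becomes $w(t)(1-\cos(kr))$. Using \eqref{eq:Bessel4} once with $b=0$ and once with $b=k$, the $r$-integral equals
\begin{align*}
2\big[K_0(\lambda\kappa|e_1-\theta|)-K_0(\lambda|e_1-\theta|\sqrt{\kappa^2+k^2})\big].
\end{align*}
Multiplying by $-\gamma\lambda$ and integrating over $\mathbb{S}^1$ produces $-2\gamma\lambda T^0_0(\lambda\kappa)+2\gamma\lambda T^0_0(\lambda\sqrt{\kappa^2+k^2})$. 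Summing the three pieces gives \eqref{eq:new-eigen-val}.

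The only genuine point of care is justifying Fubini and the splitting of the difference term into two separate integrals. Each piece is individually finite because $T^0_0(\beta)<\infty$ for $\beta>0$: the singularity of $K_0$ at $\theta=e_1$ is only logarithmic, as noted in Lemma \ref{lem:sharp-est-sph-int-Bessel}. The kernel is also positive and integrable in $r$ for $\theta\neq e_1$, so absolute convergence holds and the manipulations are legitimate. The case $k=0$ is consistent with this, since both $T^0_0$ terms then cancel.
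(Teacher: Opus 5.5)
Your proof is correct and follows the paper's overall structure: you substitute $e_k$ and $\widetilde e_k$, discard the $\sin(kr)$ part by parity, and evaluate the $r$-integrals through Bessel representations. The one real difference is how you evaluate $\int_{\R}(1-\cos(kr))\,G_\kappa\big((r^2+\lambda^2p_\theta^2)^{1/2}\big)\,\mathrm{d}r$.

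\textbf{Your route.} You split this integral into two pieces and apply the cosine transform \eqref{eq:Bessel4} at $b=k$ and at $b=0$. The $b=0$ case is the paper's \eqref{eq:Bessel4-bis}, which it obtains by letting $b\to 0$, since \eqref{eq:Bessel4} is stated only for $b>0$.

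\textbf{The paper's route.} It writes $1-\cos(kr)=\int_0^1 kr\sin(\varrho kr)\,\mathrm{d}\varrho$. It then evaluates the resulting sine transform via \eqref{eq:Bessel3}, which produces $K_1$, and integrates back in $\varrho$ using $K_1=-K_0'$.

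\textbf{What each buys.} Your route is shorter and bypasses \eqref{eq:Bessel3} entirely. As printed, \eqref{eq:Bessel3} omits the factor $x$ in the integrand, although the paper applies it in the correct form with $r\sin(\varrho kr)$. The only price of your route is separating the difference into two individually convergent integrals. That is legitimate for exactly the reason you give: $p_\theta>0$ for a.e.\ $\theta$, and $T^0_0(\beta)<\infty$.

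The paper's route keeps $1-\cos(kr)$ together, so the $r$-integrand stays integrable even at $p_\theta=0$. Its $\varrho$-parametrization also matches the representation \eqref{eq:new-exp-eigen} in terms of $T^1_1$. That representation is later used to prove that $k\mapsto\sigma_{\lambda,\gamma}(k)$ is increasing.
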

\begin{proof}
Note that $-e_k''- \frac{1}{\lambda ^2} e_k = \big( k^2 -\frac{1}{\lambda^2}\big)e_k$. Thus, by the expression  \eqref{eq:reg-func-cyliner} we find that,
\begin{align*}
L_\lambda (e_k)(t)
& =\big(k^2 -\frac{1}{\lambda^2}\big)e_k(t)\\
&\quad - \gamma  \lambda   \int_{\mathbb{R}}\int_{\mathbb{S}^1}(e_k(t)-e_k(t-r)) G_\kappa\left((r^2+\lambda ^2|e_1-\theta|^2)^{1/2} \right)\mathrm{d}r \mathrm{d}\theta\\
&\quad +\frac{\gamma \lambda }{2}  e_k(t) \int_{\mathbb{R}}\int_{\mathbb{S}^1}|e_1-\theta|^2 G_\kappa\left((r^2+\lambda ^2|e_1-\theta|^2)^{1/2} \right) \mathrm{d}r \mathrm{d}\theta   .
\end{align*}
The identity $\cos(a-b)=\cos(a)\cos(b) + \sin(a)\sin(b)$ implies,
\begin{align}
e_k(t)-e_k(t-r)
&=e_k(t)\int_0^1 kr\sin(\varrho kr)\mathrm{d}\varrho - \sin(kt)\sin(kr).\label{eq:Eigen-sec-1}
\end{align}
Since, $r\mapsto\sin(kr) G_\kappa\left((r^2+\lambda ^2|e_1-\theta|^2)^{1/2} \right)$ is odd, its integral over $\mathbb{R}$ vanishes. Therefore, the  expression of $L_\lambda(e_k)(t)$ amounts to $L_\lambda(e_k) = \sigma_{\lambda, \gamma}(k) e_k$ where,
\begin{align}
\sigma_{\lambda ,\gamma}(k)
& = k^2- \frac{1}{\lambda ^2} \nonumber
\\
& \quad - \gamma  \lambda  k \int_0^1 \int_{\mathbb{R}}\int_{\mathbb{S}^1}r\sin(\varrho k r) G_\kappa\big((r^2+\lambda ^2|e_1-\theta|^2)^{1/2} \big)\mathrm{d}r \mathrm{d}\theta  \mathrm{d}\varrho\label{eq:I1}\\
&  \quad +\frac{\gamma \lambda  }{2}  \int_{\mathbb{R}}\int_{\mathbb{S}^1}|e_1-\theta|^2 G_\kappa\left((r^2+\lambda ^2|e_1-\theta|^2)^{1/2} \right) \mathrm{d}r \mathrm{d}\theta. \label{eq:I2}
\end{align}
\noindent Now let $p_\theta^2:=|e_1-\theta|^2 = 2(1-\theta.e_1)$ and recall that $G_\kappa(r)= r^{-1}e^{-\kappa r}$.  For the term in  \eqref{eq:I2},  the convergence dominated theorem together with \eqref{eq:Bessel4} and  the continuity of $K_0$ imply
\begin{align} \label{eq:Bessel4-bis}
\int_0^{\infty} \frac{e^{-\beta\sqrt{x^2+\gamma^2}}}{\sqrt{x^2+\gamma^2}}\mathrm{d}x
&=\lim_{b\to 0}\int_0^{\infty} \frac{e^{-\beta\sqrt{x^2+\gamma^2}}}{\sqrt{x^2+\gamma^2}}
\cos(bx)\mathrm{d} x = \lim_{b\to 0} K_0\big(\gamma\sqrt{\beta^2+b^2}\big) = K_0(\gamma\beta),
\end{align}
for all for $\gamma>0$ and $\beta>0$. Applying this, yields that
\begin{align}\label{eq:Bessel-Eiegn9}
\int_{\mathbb{R}}    G_\kappa\left((r^2+(\lambda  p_\theta)^2)^{1/2} \right)\mathrm{d}r =2 \int_{0}^{\infty} \frac{e^{-\kappa\sqrt{r^2+(\lambda p_\theta)^2}}}{\sqrt{r^2+(\lambda p_\theta)^2}}\mathrm{d}r = 2K_0( \lambda  \kappa p_\theta ).
\end{align}
On the other hand, from the relations \eqref{eq:deriv-K10} and  \eqref{eq:Bessel3} we deduce
\begin{align*}
k\int_{\mathbb{R}} &r\sin(\varrho k r)  G_\kappa\left((r^2+(\lambda  p_\theta)^2)^{1/2} \right)\mathrm{d}r
= 2 k\int_0^{\infty} \frac{re^{-\kappa\sqrt{r^2+(\lambda p_\theta)^2}}}{\sqrt{r^2+(\lambda p_\theta)^2}}
\sin(\varrho k r)\mathrm{d}r\\
&=
2 \frac{k^2 \varrho \lambda  p_\theta}{\sqrt{  \kappa^2+\varrho^2 k^2 }} K_1( \lambda  p_\theta\sqrt{  \kappa^2+\varrho^2 k^2 } )
\overset{K_1 =-K'_0}{=} -2\frac{d}{\mathrm{d}\varrho}\big(K_0(\lambda  p_\theta
\sqrt{\kappa^2+\varrho^2 k^2})\big).
\end{align*}
It follows that, the integrand factor under $\mathbb{S}^1=\partial B_1$ in \eqref{eq:I1} is given by
\begin{align}\label{eq:Bessel-Eiegn 8}
k \int_0^1\int_{\mathbb{R}} r\sin(\varrho k r)  G_\kappa((r^2+(\lambda  p_\theta)^2)^{1/2} )\mathrm{d}r\mathrm{d}\varrho = 2K_0(\lambda p_\theta\kappa)-2K_0(\lambda p_\theta\sqrt{\kappa^2+k^2}).
\end{align}
Substituting \eqref{eq:Bessel-Eiegn 8} and \eqref{eq:Bessel-Eiegn9} into \eqref{eq:I1} and \eqref{eq:I2}, respectively, gives the desired expression for $\sigma_{\lambda ,\gamma}(k)$. By the token one obtains $L_\lambda (\widetilde{e}_k) (t)=\sigma_{\lambda ,\gamma}(k) \widetilde{e}_k(t).$
\end{proof}
\noindent In the sequel, we provide qualitative properties for the eigenvalues $\sigma_{\lambda,\gamma}(k)$ for $k\in \mathbb{N}$.
%
\begin{proposition}\label{prop:asym-eigen-val}
Let $\gamma >0$, $\lambda>0$ and $k\in \mathbb{N}$. The following hold
\begin{align}
\lim_{k \to \infty} \frac{\sigma_{\lambda ,\gamma}(k)}{k^2}&= 1.\label{eq:eigen-p3}
\\
\sigma_{0 ,\gamma}(k)=  \lim_{\lambda\to 0} \sigma_{\lambda ,\gamma}(k)&= -\infty.\label{eq:eigen-p1}\\
\sigma_{\infty ,\gamma}(k) = \lim_{\lambda\to \infty} \sigma_{\lambda ,\gamma}(k)&=k^2 -2\pi\gamma \Big(  \frac{1}{\kappa}-\frac{1}{\sqrt{\kappa^2+k^2}}\Big).
\label{eq:eigen-p2}
\end{align}
\end{proposition}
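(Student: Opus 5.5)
The plan is to work directly from the closed form \eqref{eq:new-eigen-val},
\[
\sigma_{\lambda,\gamma}(k)=k^2-\lambda^{-2}-2\gamma\lambda T^0_0(\lambda\kappa)+2\gamma\lambda T^0_0\big(\lambda\sqrt{\kappa^2+k^2}\big)+\gamma\lambda T^0_2(\lambda\kappa).
\]
Each of the three limits is then read off from the asymptotics of $T^i_n$ in Lemma \ref{lem:sharp-est-sph-int-Bessel}.

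\textbf{Limit as $k\to\infty$.} For fixed $\lambda>0$, the terms $-\lambda^{-2}$, $-2\gamma\lambda T^0_0(\lambda\kappa)$ and $\gamma\lambda T^0_2(\lambda\kappa)$ do not depend on $k$, so they are $o(k^2)$. For the remaining term, \eqref{eq:est-T10-b-infty} with $\beta=\lambda\sqrt{\kappa^2+k^2}\to\infty$ gives $T^0_0(\beta)\sim 2C^0_0\beta^{-1}\to0$. Positivity of $K_0$ also gives $0\le T^0_0(\beta)$. Hence $\sigma_{\lambda,\gamma}(k)/k^2\to1$, which is \eqref{eq:eigen-p3}.

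\textbf{Limit as $\lambda\to0$.} Fix $k$ and use \eqref{eq:est-T1-b0}:
\[
T^0_n(\beta)\sim -2^{n+2}B_{n+1}\log\beta \quad\text{as } \beta\to 0.
\]
It follows that $\lambda T^0_0(\lambda\kappa)$, $\lambda T^0_0(\lambda\sqrt{\kappa^2+k^2})$ and $\lambda T^0_2(\lambda\kappa)$ are all $O(\lambda|\log\lambda|)\to0$. (For the difference term one can be even sharper, using the refined limit in Lemma \ref{lem:sharp-est-sph-int-Bessel}.) The $-\lambda^{-2}$ term dominates, so $\sigma_{\lambda,\gamma}(k)\to-\infty$, which is \eqref{eq:eigen-p1}.

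\textbf{Limit as $\lambda\to\infty$.} Here $-\lambda^{-2}\to0$. By \eqref{eq:est-T10-b-infty}, $T^0_n(\beta)=2C^0_n\beta^{-n-1}+O(\beta^{-n-3})$. This gives
\[
\lambda T^0_0(\lambda a)\to \frac{2C^0_0}{a}, \qquad \lambda T^0_2(\lambda\kappa)=O(\lambda^{-2})\to0 .
\]
Therefore
\[
\lim_{\lambda\to\infty}\sigma_{\lambda,\gamma}(k)=k^2-4\gamma C^0_0\Big(\frac1\kappa-\frac1{\sqrt{\kappa^2+k^2}}\Big).
\]
By \eqref{eq:Bessel5} with $b=1$, $C^0_0=\int_0^\infty K_0=\pi/2$, so $4\gamma C^0_0=2\pi\gamma$, which gives \eqref{eq:eigen-p2}.

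The main obstacle is the $\lambda\to\infty$ step. One must make sure the $O(\beta^{-n-3})$ remainders from Lemma \ref{lem:sharp-est-sph-int-Bessel} are uniform enough to be multiplied by $\lambda$. This is fine, because the lemma gives a genuine limit $\beta^2(\beta^{n+1}T^i_n-2C^i_n)\to C^i_{n+2}/4$, which is precisely a bound of that order. One also has to check that the $T^0_2$ term, which carries the extra weight $|e_1-\theta|^2$, indeed decays like $\beta^{-3}$ and contributes nothing in the limit.
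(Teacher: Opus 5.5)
Your proposal is correct and follows essentially the same route as the paper: insert the closed form \eqref{eq:new-eigen-val} into the small- and large-$\beta$ asymptotics of $T^0_n$ from Lemma \ref{lem:sharp-est-sph-int-Bessel}, and use $C^0_0=\pi/2$ from \eqref{eq:Bessel5}. The $\lambda\to\infty$ step needs only the leading-order limit $\beta^{n+1}T^0_n(\beta)\to 2C^0_n$, so your concern about uniformity of the $O(\beta^{-n-3})$ remainders is unnecessary.
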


\begin{proof}
Since  $C^0_0 = \int_{0}^{\infty} K_0(r)\mathrm{d}r = \frac{\pi}{2}$; see the formula \eqref{eq:Bessel5},  Lemma~\ref{lem:sharp-est-sph-int-Bessel} implies that,
\begin{align*}
&\lim_{\lambda\to \infty} 2\gamma \lambda T^0_0(\lambda a)
=  \lim_{\lambda\to \infty} 2\gamma \big(2(\lambda a)^{-1} C^0_0 +2^{-2}(\lambda a)^{-3}C^0_2\big)
= \frac{2\pi \gamma}{a}, \\
&\lim_{\lambda\to \infty} 2\gamma \lambda T^0_2(\lambda a)
=  \lim_{\lambda\to \infty} 2\gamma \big(2(\lambda a)^{-3} C^0_2 +2^{-2}(\lambda a)^{-5}C^0_4\big)=0,
\end{align*}
for any $a>0$.  It follows that
\begin{align*}
\lim_{\lambda\to \infty}
\sigma_{\lambda ,\gamma}(k)&=\lim_{\lambda\to \infty} \Big(k^2- \frac{1}{\lambda ^2}-2\gamma \lambda T^0_0(\lambda  \kappa)+2\gamma\lambda T^0_0(\lambda \sqrt{\kappa^2+k^2}) +\gamma\lambda  T^0_2(\lambda  \kappa)\Big)\\
&= k^2 -  2\pi \gamma\Big(  \frac{1}{\kappa}-\frac{1}{\sqrt{\kappa^2+k^2}}\Big).
\end{align*}
Analogously, Lemma \ref{lem:sharp-est-sph-int-Bessel} implies
\eqref{eq:eigen-p1} as follows
\begin{align*}
\lim_{\lambda\to 0}
\sigma_{\lambda ,\gamma}(k)
&=\lim_{\lambda\to0} \big(k^2- \frac{1}{\lambda ^2}
+O(\lambda \log(\lambda \sqrt{\kappa^2+k^2})) +
O(\lambda  \log(\lambda  \kappa))\big)=-\infty,\\
\lim_{k\to \infty}
\frac{\sigma_{\lambda ,\gamma}(k)}{k^2}
&=1+ \lim_{k\to \infty} \big(- \frac{1}{k^2\lambda ^2}
+O(k^{-2}) +O(k^{-2}  ( \sqrt{\kappa^2+ k^2})^{-1})\big)=1.
\end{align*}
\end{proof}

\begin{lemma}\label{lem:monoton-lambda}
There exists $\gamma_0\in (0, \infty)$ such that
\begin{align*}
\partial_\lambda    \sigma_{\lambda ,\gamma}(1)>0 \qquad \textrm{ for every $\lambda >0$ and $\gamma\in (0,\gamma_0)$.}
\end{align*}
\end{lemma}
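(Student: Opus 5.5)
The plan is to separate the local part of $\sigma_{\lambda,\gamma}(1)$ from the Yukawa part and show that the Yukawa part, after differentiation in $\lambda$, is dominated by $2/\lambda^3$ uniformly in $\lambda>0$ once $\gamma$ is small. By \eqref{eq:new-eigen-val} with $k=1$ and $a:=\sqrt{\kappa^2+1}$ we write
\begin{align*}
\sigma_{\lambda,\gamma}(1)=1-\frac{1}{\lambda^2}+\gamma g(\lambda),\qquad g(\lambda):=-2\lambda T^0_0(\lambda\kappa)+2\lambda T^0_0(\lambda a)+\lambda T^0_2(\lambda\kappa),
\end{align*}
so that $\partial_\lambda\sigma_{\lambda,\gamma}(1)=\frac{2}{\lambda^3}+\gamma g'(\lambda)$. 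It then suffices to prove
\begin{align*}
M:=\sup_{\lambda>0}\lambda^3|g'(\lambda)|<\infty,
\end{align*}
and to take $\gamma_0:=2/M$ (or $\gamma_0=\infty$, replaced by any finite number, if $M=0$).

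\textbf{Step 1: formula for $g'$.} Using \eqref{eq:derivat-Tin}, namely $(T^0_n)'=-T^1_{n+1}$, we get
\begin{align*}
g'(\lambda)=-2T^0_0(\lambda\kappa)+2\lambda\kappa T^1_1(\lambda\kappa)+2T^0_0(\lambda a)-2\lambda a T^1_1(\lambda a)+T^0_2(\lambda\kappa)-\lambda\kappa T^1_3(\lambda\kappa).
\end{align*}
The representation \eqref{eq:Expres-Tn} together with the integrability of $r^nK_i(r)$ shows that each $T^i_n$ is continuous on $(0,\infty)$. Hence $\lambda\mapsto\lambda^3 g'(\lambda)$ is continuous, and it suffices to control the two ends $\lambda\to0$ and $\lambda\to\infty$.

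\textbf{Step 2: $\lambda\to0$.} By \eqref{eq:est-T1-b0} we have $T^0_n(\beta)=O(|\log\beta|)$ and $\beta T^1_n(\beta)=O(1)$. Therefore $g'(\lambda)=O(|\log\lambda|)$, so $\lambda^3 g'(\lambda)\to0$ as $\lambda\to0$.

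\textbf{Step 3: $\lambda\to\infty$, the main obstacle.} Here each term of $g'$ is only $O(\lambda^{-1})$, so a cancellation of the leading order is needed. Lemma \ref{lem:sharp-est-sph-int-Bessel} gives the quantitative expansion
\begin{align*}
T^i_n(\beta)=2C^i_n\beta^{-n-1}+O(\beta^{-n-3}) \qquad \text{as } \beta\to\infty.
\end{align*}
Applying it term by term:
\begin{itemize}
\item $-2T^0_0(\beta)+2\beta T^1_1(\beta)=4(C^1_1-C^0_0)\beta^{-1}+O(\beta^{-3})$;
\item $T^0_2(\beta)$ and $\beta T^1_3(\beta)$ are both $O(\beta^{-3})$.
\end{itemize}
The key input is \eqref{eq:Bessel5} with $b=1$, which gives $C^0_0=\int_0^\infty K_0=\frac{\pi}{2}=\int_0^\infty rK_1=C^1_1$. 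Hence the $\beta^{-1}$ terms cancel separately for $\beta=\lambda\kappa$ and for $\beta=\lambda a$, and $g'(\lambda)=O(\lambda^{-3})$ as $\lambda\to\infty$.

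This is consistent with Proposition \ref{prop:asym-eigen-val}, where $g$ tends to the constant $-2\pi(\frac1\kappa-\frac1{a})$ with a correction of order $\lambda^{-2}$.

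\textbf{Conclusion.} Combining Steps 1–3, $\lambda^3 g'(\lambda)$ is continuous on $(0,\infty)$, tends to $0$ at $0$, and is bounded near $\infty$. Thus $M<\infty$, and for $\gamma<\gamma_0=2/M$ we obtain
\begin{align*}
\partial_\lambda\sigma_{\lambda,\gamma}(1)\ge\frac{2-\gamma M}{\lambda^3}>0 \qquad \text{for all } \lambda>0.
\end{align*}
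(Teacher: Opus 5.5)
Your proposal is correct and follows essentially the same route as the paper: there one writes $\lambda^3\partial_\lambda\sigma_{\lambda,\gamma}(1)=2-\gamma A(\lambda)$, with $A=-\lambda^3 g'$ in your notation. The paper then shows $A(\lambda)\to 0$ as $\lambda\to 0$ via \eqref{eq:est-T1-b0} and that $A$ has a finite limit as $\lambda\to\infty$ thanks to the cancellation $C^0_0=C^1_1$, and sets $\gamma_0=2/\sup A$. The differences are only cosmetic: the paper also uses $C^1_3=3C^0_2$ to compute the exact value of $A(\infty)$, which you rightly note is unnecessary since an $O(\lambda^{-3})$ bound on $g'$ suffices, and you bound $|\lambda^3 g'|$ rather than the one-sided supremum of $A$, giving a possibly smaller but equally valid $\gamma_0$.
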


\begin{proof}
From \eqref{eq:new-eigen-val} we know that,
\begin{align*}
\sigma_{\lambda, \gamma }(1)&=1-\frac{1}{\lambda^2}-\gamma\big( 2\lambda T^0_0(\lambda  \kappa)-2\lambda T^0_0(\lambda \sqrt{\kappa^2+1}) -\lambda  T^0_2(\lambda  \kappa) \big).
\end{align*}
By direct computation, using the relations \eqref{eq:derivat-Tin}, we obtain
\begin{align*}
\partial_{\lambda }\sigma_{\lambda ,\gamma}(1) = \frac{2}{\lambda^3} &- \gamma \Big( 2 T^0_0(\lambda \kappa) - 2 T^0_0(\lambda \sqrt{\kappa^2+1}) - T^0_2(\lambda \kappa) \\
&\qquad- 2\lambda \kappa T^1_1(\lambda\kappa) + 2\lambda \sqrt{\kappa^2+1} T^1_1(\lambda \sqrt{\kappa^2+1}) + \lambda \kappa T^1_3(\lambda \kappa) \Big).
\end{align*}
We therefore have
\begin{align}\label{Deriv-Sig-la-cub}
\lambda  ^3 \partial_{\lambda }\sigma_{\lambda ,\gamma}(1)=2- \gamma A(\lambda ) ,
\end{align}
where  $A: (0,\infty)\to \mathbb{R}$ is defined by
\begin{align*}
A(\lambda )&:=
 2\lambda^3 T^0_0(\lambda  \kappa)-2\lambda^3 T^0_0(\lambda \sqrt{\kappa^2+1}) -\lambda ^3 T^0_2(\lambda  \kappa) \\
&\quad- 2\lambda^4 \kappa  T^1_1(\lambda  \kappa)+2\lambda^4\sqrt{\kappa^2+1} T^1_1(\lambda \sqrt{\kappa^2+1}) +\lambda ^4 \kappa T^1_3(\lambda  \kappa).
\end{align*}
We have $\lim_{\lambda \to 0}A(\lambda )=0$ since using the  relation \eqref{eq:est-T1-b0} of Lemma \ref{lem:sharp-est-sph-int-Bessel}, as $\lambda\to 0$ implies
\begin{align*}
A(\lambda)& \approx 2^4\lambda^3( -\log(\lambda  \kappa) +\log(\lambda \sqrt{\kappa^2+1}) +\frac{2}{3} \log(\lambda  \kappa)   ) \\
&+2^3\lambda^4( -(\lambda  \kappa)^{-1} +(\lambda \sqrt{\kappa^2+1})^{-1} +\frac{2}{3}\kappa (\lambda  \kappa)^{-1} ) \to 0\quad \text{as $\lambda\to 0$}.
\end{align*}
Now rewriting the expression of $A(\lambda)$ and using  the relation \eqref{eq:est-T10-b-infty} as $\lambda \to \infty$ yields
\begin{align*}
A(\lambda )
&=  2\lambda^3 T^0_0(\lambda  \kappa)-2\lambda^3 (\lambda\kappa)  T^1_1(\lambda  \kappa)\\
&\quad +   2\lambda^3 (\lambda\sqrt{\kappa^2+1}) T^1_1(\lambda \sqrt{\kappa^2+1})-2\lambda^3 T^0_0(\lambda \sqrt{\kappa^2+1})\\
&\quad +\lambda^3  (\lambda\kappa)T^1_3(\lambda  \kappa)-\lambda ^3 T^0_2(\lambda  \kappa)\\
&\approx 4\lambda ^3(\lambda \kappa)^{-1}\left(C_0^0-C_1^1 \right) +2^{-1}\lambda^3(\lambda\kappa)^{-3} (C_2^0-C_3^1) \\
&\quad +4  \lambda ^3(\lambda \sqrt{\kappa^2 + 1})^{-1}\left(C_1^1-C_0^0\right)+2^{-1}\lambda ^3(\lambda \sqrt{\kappa^2 + 1})^{-3}\left(C_3^1-C_2^0\right)\\
& \quad +2\lambda ^3(\lambda \kappa)^{-3}\left(C_3^1-C_2^0 \right) +2^{-2}\lambda^3(\lambda\kappa)^{-5} (C_5^1-C_4^0) ,
\end{align*}
where we recall that $C_n^i= \int_0^{\infty } r^nK_i(r) \mathrm{d}r$, $n\geq i$. From \eqref{eq:deriv-K10}, \eqref{eq:decKnuInf0} and \eqref{eq:decKnuInf} we have $K_0'(r) = -K_1(r)$, $K_0(r)\sim -\log(r)$ as $r\to 0$
and $K_0(r)\sim \sqrt{\frac{\pi}{2r}}e^{-r} $ as $r\to \infty$. Thus, the integration by part together with the formulas in relation \eqref{eq:Bessel5}  imply
\begin{align*}
C_0^0&=\int_0^{\infty} K_0(r)\mathrm{d}r = -\int_0^{\infty} rK'_0(r)\mathrm{d}r=  \int_0^{\infty} rK_1(r)\mathrm{d}r= C_1^1= \frac{\pi}{2}, \\
C^1_3&=\int_0^{\infty} r^3K_1(r)\mathrm{d}r= -\int_0^\infty r^3K'_0(r)\mathrm{d}r= 3 \int_0^{\infty} r^2K_0(r)\mathrm{d}r = \frac{3\pi}{2} = 3C_2^0.
\end{align*}
That is, $C_1^1=C_0^0$ and $C_3^1 = 3C_2^0$.
From the foregoing we  have
\begin{align*}
\lim_{\lambda \to \infty } A(\lambda)
&= -\kappa^{-3}C_2^0 + (\sqrt{\kappa^2 +1 })^{-3}C_2^0 + 4\kappa^{-3}C_2^0 = \frac{\pi}{2}\big(3\kappa^{-3} +(\sqrt{\kappa^2 +1 })^{-3} \big).
\end{align*}
Note that, the functions $T^i_n $ for $ i=0,1,~n\geq i$ are continuous, on $(0,\infty)$.  In conclusion, $\lambda \mapsto A(\lambda)$ continuous on $[0,\infty)$ since $A(0)=\lim_{\lambda \to 0}A(\lambda )=0$; and we have
\begin{align*}
A(\infty):= \lim_{\lambda \to \infty }A(\lambda )=   \frac{\pi}{2}\big(3\kappa^{-3} +(\sqrt{\kappa^2 +1 })^{-3} \big)>0.
\end{align*}
It follows that $A$ is bounded on $[0,\infty)$. Thus $\sup_{\lambda \in (0,\infty)} A(\lambda) $ exists and we have
\begin{align*}
A_0 := \sup_{\lambda \in (0,\infty)} A(\lambda) \geq A(\infty)> 0.
\end{align*}
Recalling  the relation \eqref{Deriv-Sig-la-cub}, it follows that for $\gamma <  \frac{2}{A_0}$ we have
 $$ {\lambda^3 }  \partial_\lambda   \sigma_{\lambda ,\gamma}(1)=2- \gamma  A(\lambda )\geq \inf_{\lambda \in (0,\infty)} 2-\gamma A(\lambda ) =  2-\gamma A_0>0.$$
 This is equivalent to stating that $\partial_\lambda \sigma_{\lambda ,\gamma}(1) > 0$ for all $\gamma < \frac{2}{A_0}$. Consequently, we set
\begin{align*}
\gamma_0:=\frac{2}{A_0}= \frac{2}{\sup_{\lambda \in (0,\infty)} A(\lambda)}>0.
\end{align*}
\end{proof}

\noindent The following theorem provides the key result for the eigenvalues $\sigma_{\lambda,\gamma}(k)$.
\begin{theorem}\label{thm:Lambda-star}
There exists $\gamma_* \in \big(0, \, \frac{1}{2\pi} \tfrac{1}{\frac{1}{\kappa} - \frac{1}{\sqrt{\kappa^2+1}}}\big)$
such that the following assertions hold.
\begin{itemize}
\item For every $\gamma\in (0,\gamma_*)$  and $\lambda>0$ the sequence $k\mapsto \sigma_{\lambda,\gamma}(k)$ is strictly increasing.
\item
For each $\gamma\in (0,\gamma_*)$ there exists a unique parameter $\lambda _*=\lambda _*(\gamma)>0$  satisfying
\begin{align}
\sigma_{\lambda _*(\gamma),\gamma}(1)=0.
\end{align}
In particular, it satisfies
\begin{align}\label{eq:sk-neq-0}
\sigma_{\lambda _*(\gamma),\gamma}(0)<0=\sigma_{\lambda _*(\gamma),\gamma}(1)<\sigma_{\lambda _*(\gamma),\gamma}(k)  \quad \textrm{ for all $k\geq 2$}
\end{align}
\end{itemize}
\end{theorem}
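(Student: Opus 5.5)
Monotonicity in $k$ comes from treating $k$ as a continuous variable in \eqref{eq:new-eigen-val}. Only two terms depend on $k$:
\begin{align*}
\sigma_{\lambda,\gamma}(k) = k^2 + 2\gamma\lambda\, T^0_0\bigl(\lambda\sqrt{\kappa^2+k^2}\bigr) + c(\lambda,\gamma),
\end{align*}
where $c(\lambda,\gamma)$ collects the $k$-independent terms. Writing $g(x)=x^2+2\gamma\lambda T^0_0(\lambda\sqrt{\kappa^2+x^2})$ and using \eqref{eq:derivat-Tin}, we get
\begin{align*}
g'(x)=2x\Big(1-\gamma\frac{\lambda^2}{\sqrt{\kappa^2+x^2}}\,T^1_1\bigl(\lambda\sqrt{\kappa^2+x^2}\bigr)\Big).
\end{align*}
Set $h(\beta):=\beta T^1_1(\beta)$. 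Then $\frac{\lambda^2}{\sqrt{\kappa^2+x^2}}T^1_1(\lambda\sqrt{\kappa^2+x^2}) = \frac{h(\beta)}{\kappa^2+x^2}$ with $\beta=\lambda\sqrt{\kappa^2+x^2}$, and this is at most $h(\beta)/\kappa^2$. Lemma~\ref{lem:sharp-est-sph-int-Bessel} shows that $h$ is continuous and bounded on $(0,\infty)$:
\begin{itemize}
\item as $\beta\to0$, $h(\beta)\to 4B_1=2\pi$;
\item as $\beta\to\infty$, $h(\beta)\to 0$, with $h\approx 2C^1_1\beta^{-1}$.
\end{itemize}
A sharper route is available: since $K_1(r)\le 1/r$ (true because $rK_1$ is decreasing by \eqref{eq:deriv-K10} and $rK_1(r)\to1$), we get $h(\beta)\le \int_{\mathbb S^1}d\theta=2\pi$. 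Hence $g'(x)\ge 2x(1-2\pi\gamma/\kappa^2)>0$ for $x>0$ whenever $\gamma<\kappa^2/(2\pi)$, so $k\mapsto\sigma_{\lambda,\gamma}(k)$ is strictly increasing on $\mathbb N$, uniformly in $\lambda>0$. The threshold here is $\gamma_1:=\kappa^2/(2\pi)$.

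For the second assertion, take $k=1$. By Proposition~\ref{prop:asym-eigen-val}, $\sigma_{\lambda,\gamma}(1)\to-\infty$ as $\lambda\to0$, and
\begin{align*}
\sigma_{\lambda,\gamma}(1)\to 1-2\pi\gamma\Big(\tfrac1\kappa-\tfrac1{\sqrt{\kappa^2+1}}\Big) \quad\text{as } \lambda\to\infty,
\end{align*}
which is positive exactly when $\gamma<\frac{1}{2\pi}\big(\frac1\kappa-\frac1{\sqrt{\kappa^2+1}}\big)^{-1}=:\gamma_2$. Continuity in $\lambda$ (each $T^i_n$ is continuous) and the intermediate value theorem then give a zero $\lambda_*$. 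Lemma~\ref{lem:monoton-lambda} gives $\partial_\lambda\sigma_{\lambda,\gamma}(1)>0$ for $\gamma<\gamma_0$, so this zero is unique.

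We therefore set $\gamma_*:=\tfrac12\min\{\gamma_0,\gamma_1,\gamma_2\}$. The halving keeps $\gamma_*$ strictly inside the interval $(0,\gamma_2)$ claimed in the statement. The chain \eqref{eq:sk-neq-0} then follows immediately from strict monotonicity in $k$, evaluated at $\lambda=\lambda_*(\gamma)$: $\sigma(0)<\sigma(1)=0<\sigma(k)$ for $k\ge2$.

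The main obstacle is the uniform-in-$\lambda$ control of $\lambda^2 T^1_1(\lambda\sqrt{\kappa^2+x^2})/\sqrt{\kappa^2+x^2}$. The pointwise bound $K_1(r)\le1/r$ is the cleanest way to get it, and one must justify it from the facts collected in Section~\ref{sec:bessel-funct}, namely $(rK_1)'=-rK_0<0$ and $rK_1(r)\to1$ as $r\to0$. If that bound were unavailable, the fallback is the boundedness of $h$ on $(0,\infty)$, deduced from its two limits in Lemma~\ref{lem:sharp-est-sph-int-Bessel} together with continuity.
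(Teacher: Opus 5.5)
Your derivative $g'(x)$ is correct, but the rewriting in terms of $h(\beta)=\beta T^1_1(\beta)$ drops a factor of $\lambda$. With $\beta=\lambda\sqrt{\kappa^2+x^2}$ one actually has
\[
\frac{\lambda^2}{\sqrt{\kappa^2+x^2}}\,T^1_1(\beta)=\frac{\lambda\,h(\beta)}{\kappa^2+x^2}=\frac{\beta^2\,T^1_1(\beta)}{(\kappa^2+x^2)^{3/2}}.
\]
The bound $K_1(r)\le 1/r$ is true, but it only gives $2\pi\lambda/\kappa^2$ here, which blows up as $\lambda\to\infty$. Your fallback via boundedness of $h$ fails for the same reason. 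So the first bullet, which must hold uniformly in $\lambda>0$, is not proved.

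The threshold $\kappa^2/(2\pi)$ is also not merely loose; it is wrong for the derivative argument. By Lemma~\ref{lem:sharp-est-sph-int-Bessel}, $\beta^2T^1_1(\beta)\to 2C^1_1=\pi$ as $\beta\to\infty$. Letting $\lambda\to\infty$ and then $x\to0^+$, the bracket in $g'(x)$ therefore tends to $1-\pi\gamma/\kappa^3$. This is negative for $\gamma\in(\kappa^3/\pi,\kappa^2/(2\pi))$, an interval that is nonempty whenever $\kappa<1/2$.

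The missing ingredient is control of $q(\beta):=\beta^2T^1_1(\beta)$ rather than $\beta T^1_1(\beta)$. This requires the decay $T^1_1(\beta)=O(\beta^{-2})$ at infinity, i.e.\ the exponential decay of $K_1$, which the pointwise bound $K_1(r)\le 1/r$ cannot supply. By Lemma~\ref{lem:sharp-est-sph-int-Bessel}, $q(\beta)\sim 2\pi\beta\to0$ as $\beta\to0$ and $q(\beta)\to\pi$ as $\beta\to\infty$. By continuity $M:=\sup_{\beta>0}q(\beta)<\infty$, and then $g'(x)\ge 2x(1-\gamma M\kappa^{-3})>0$ for $\gamma<\kappa^3/M$.

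This is exactly the paper's threshold $1/A_1$, since $A_1=\kappa^{-1}\sup_{\lambda>0}\lambda^2T^1_1(\lambda\kappa)=\kappa^{-3}M$. Once corrected, your direct differentiation of $g$ is a slightly shorter route than the paper's passage through $f_{\lambda,\gamma}$ and $f'_{\lambda,\gamma}\ge0$, because $(\kappa^2+x^2)^{-3/2}\le\kappa^{-3}$ handles the $x$-dependence at once.

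The rest is argued as in the paper and is fine: the second bullet (limits of $\sigma_{\lambda,\gamma}(1)$, intermediate value theorem, uniqueness from Lemma~\ref{lem:monoton-lambda}) and the final chain of inequalities. Halving the minimum is also a useful touch, since it guarantees the strict upper bound on $\gamma_*$ that the paper's plain minimum does not.
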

\begin{proof}
It follows  from  the relation \eqref{eq:eigen-p1}  that
\begin{align*}
\sigma_{\infty ,\gamma}(1) = 1- 2\pi\gamma\Big(  \frac{1}{\kappa}-\frac{1}{\sqrt{\kappa^2+1}}\Big)>0 \quad\textrm{if and only if }\quad \gamma< \frac{1}{2\pi} \frac{1}{ \frac{1}{\kappa}-\frac{1}{\sqrt{\kappa^2+1}}}.
\end{align*}
Let $\gamma_0\in (0, \infty)$ be  given by  Lemma \ref{lem:monoton-lambda} and let us fix $\gamma\in (0,\gamma_1)$ where we define
\begin{align}\label{eq:gamma1}
\gamma_1:= \min\Big(\gamma_0,  \frac{1}{2\pi} \frac{1}{ \frac{1}{\kappa}-\frac{1}{\sqrt{\kappa^2+1}}  } \Big).
\end{align}
However, since $\gamma<\gamma_1\leq \gamma_0$, by Lemma \ref{lem:monoton-lambda} we have
\begin{align*}
\partial_\lambda\sigma_{\lambda, \gamma}(1)>0 \qquad\textrm{for every $\lambda>0$}.
\end{align*}
In other words, the map $\lambda\mapsto \sigma_{\lambda, \gamma}(1)$ is strictly increasing over $(0,\infty)$
and $\sigma_{\infty, \gamma}(1)\cdot \sigma_{0, \gamma}(1)<0.$ since $\sigma_{\infty, \gamma}(1) >0 $, $\gamma\in (0,\gamma_1)$, while from \eqref{eq:eigen-p1} we have $\sigma_{0 ,\gamma}(1) = -\infty$. The intermediate value theorem implies that, there is a  unique parameter $\lambda_*=\lambda_*(\gamma)>0$ such that,
\begin{align*}
\sigma_{\lambda _*(\gamma),\gamma}(1)=0 .
\end{align*}
Next, we want to study the monotonicity of $k\mapsto \sigma_{\lambda ,\gamma}(k)$. Given that $(T^0_0)' = -T^1_1$ (see \eqref{eq:derivat-Tin}), the fundamental theorem of calculus implies
\begin{align*}
-2\gamma \lambda T^0_0(\lambda  \kappa)+2\gamma\lambda T^0_0(\lambda \sqrt{\kappa^2+k^2})
&=2\gamma\lambda\int_0^{1} \frac{d}{\mathrm{d}\varrho}\Big(  T^0_0(\lambda \sqrt{\kappa^2+k^2\varrho}) \Big)\mathrm{d}\varrho\\
%
%
&= -\gamma \int_0^1\frac{ \lambda^2 k^2}{\sqrt{  \kappa^2+k^2\varrho }}T^1_1(\lambda  \sqrt{  \kappa^2+k^2\varrho  })\mathrm{d}\varrho.
\end{align*}
Inserting this in the  relation \eqref{eq:new-eigen-val} we have,
\begin{align}\label{eq:new-exp-eigen}
\sigma_{\lambda,\gamma}(k)=k^2- \frac{1}{\lambda ^2}- \gamma \int_0^1\frac{ \lambda^2 k^2}{\sqrt{  \kappa^2+k^2\varrho  }}T^1_1\big(\lambda  \sqrt{  \kappa^2+k^2\varrho}\big) \mathrm{d}\varrho+\gamma\lambda  T^0_2(\lambda  \kappa).
\end{align}
Now, we introduce the functions
$f_{\lambda ,\gamma}, \, g_{\lambda ,\gamma}: [0,\infty)\to \mathbb{R}$ with
\begin{align*}
f_{\lambda ,\gamma}(x)&= 1-\gamma \int_0^1\frac{ \lambda^2}{\sqrt{  \kappa^2+x^2\varrho  }}T^1_1\big(\lambda  \sqrt{  \kappa^2+x^2\varrho}\big) \mathrm{d}\varrho,
\\
g_{\lambda ,\gamma}(x)&= x^2 f_{\lambda,\gamma}(x)-\frac{1}{\lambda ^2} +\gamma\lambda  T^0_2(\lambda  \kappa).
\end{align*}
It is clear that $\sigma_{\lambda ,\gamma}(k)=g_{\lambda ,\gamma}(k)$ for all $k \in \mathbb{N}$.
A straightforward computation gives
\begin{align*}
f'_{\lambda ,\gamma}(x) &= \int_0^1\hspace{-1ex} \frac{\gamma x\lambda^2 \varrho}{(\kappa^2 + x^2\varrho)^{3/2}} T_1^1(\lambda\sqrt{\kappa^2+x^2\varrho})
+ \frac{ \gamma x\lambda^3}{\kappa^2 + x^2\varrho} (-T_1^1)'(\lambda\sqrt{\kappa^2+x^2\varrho})\mathrm{d}\varrho.
\end{align*}
Since $T_n^i $ are non-negative functions because $K_i>0$, using
\eqref{eq:derivat-Tin} we get
\begin{align*}
(- T_1^1)'(\beta) = T_2^0(\beta) +\beta^{-1}T_1^1(\beta)>0   \qquad\textrm{for every $\beta>0$}.
\end{align*}
Therefore, for every $\gamma > 0$, the expression for $f'_{\lambda,\gamma}(x)$ shows that $f'_{\lambda ,\gamma}(x) \ge 0$ for all $x \ge 0$. Consequently, the mapping $x \mapsto f_{\lambda ,\gamma}(x)$ is non-decreasing on $[0,\infty)$.
Using in particular that $f'_{\lambda,\gamma}(x) \ge 0$ and consequently the fact that $f_{\lambda,\gamma}(x) \geq f_{\lambda,\gamma}(0)$ we obtain
\begin{align}\label{eq:deriv-g}
g'_{\lambda ,\gamma}(x)&=2x f_{\lambda,\gamma}(x)+x^2f'_{\lambda ,\gamma}(x)
\geq 2x f_{\lambda ,\gamma}(0)\quad \text{for all $x>0$},
\end{align}
where $f_{\lambda ,\gamma}(0)=1- \gamma    \frac{ \lambda ^2}{\kappa}    T_1^1(\lambda\kappa).$ However, Lemma \ref{lem:sharp-est-sph-int-Bessel} together with \eqref{eq:Bessel5} imply
\begin{align*}
&\lim_{\lambda\to 0} \frac{ \lambda ^2 }{\kappa } T_1^1(\lambda\kappa) = \lim_{\lambda\to 0} -\frac{ \lambda ^2 }{\kappa } \log(\lambda\kappa) = 0,\\
&\lim_{\lambda\to \infty} \frac{ \lambda ^2}{\kappa} T_1^1(\lambda\kappa) = \lim_{\lambda\to \infty} 2\frac{ \lambda ^2 }{\kappa} (\lambda\kappa)^{-2}\int_0^{\infty} rK_1(r)\mathrm{d}r   =  \pi\kappa^{-3}.
\end{align*}
Thus, the continuous $\lambda\mapsto \frac{ \lambda ^2 }{\kappa } T_1^1(\lambda\kappa) $ is also  bounded on $[0, \infty)$; enabling us to consider
\begin{align*}
A_1:= \kappa^{-1}\sup\limits_{\lambda>0}\left\lbrace\lambda ^2 T_1^1(\lambda\kappa)\right\rbrace\geq \lim_{\lambda\to \infty}\kappa^{-1}\lambda ^2 T_1^1(\lambda\kappa)= \pi\kappa^{-3}>0.
\end{align*}
So that for $0<\gamma <\frac{1}{A_1}$ and all for all $x>0$  we have
\begin{align*}
g'_{\lambda,\gamma}(x)\geq 2x f_{\lambda ,\gamma}(0)= 2x\big(1- \gamma    \frac{ \lambda ^2}{\kappa}   T_1^1(\lambda\kappa)\big)\geq 2x(1-\gamma A_1)>0.
\end{align*}
In other words,  for all $\lambda > 0$ and $0<\gamma < \frac{1}{A_1}$,  $x \mapsto g_{\lambda,\gamma}(x)$ is strictly increasing. Hence $k \mapsto \sigma_{\lambda,\gamma}(k) = g_{\lambda,\gamma}(k)$ is strictly increasing.
In conclusion, it is sufficient to define
\begin{align*}
\gamma_*:=\min (\gamma_1,\frac{1}{A_1})=  \min\Big(\gamma_0, \frac{1}{A_1}, \frac{1}{2\pi} \frac{1}{ \frac{1}{\kappa}-\frac{1}{\sqrt{\kappa^2+1}}  } \Big).
\end{align*}
Let us  fix $\gamma\in (0,\gamma_*)$ then, $\gamma< \gamma_1$ and  consider $\lambda_*=\lambda_*(\gamma)$ which is the unique so that  $g_{\lambda _*,\gamma}(1)=\sigma_{\lambda _*,\gamma}(1)=0.$ As the sequence  $k\mapsto g_{\lambda _*,\gamma}(k)=\sigma_{\lambda _*,\gamma}(k) $ is strictly  increasing and $g_{\lambda _*,\gamma}(1)=0$, we find that $g_{\lambda _*,\gamma}(0)<0$ and hence
\begin{align*}
\sigma_{\lambda _*,\gamma}(0)< 0= \sigma_{\lambda _*,\gamma}(1)< \sigma_{\lambda _*,\gamma}(k)\qquad\textrm{for every $k\geq 2$}.
\end{align*}
\end{proof}
\begin{corollary}
\label{cor:eigenfunct-of-Llam}
Let $\gamma_* > 0$ be given as in Theorem~\ref{thm:Lambda-star}, and fix $\lambda > 0$ and $\gamma \in (0, \gamma_*)$.
\begin{itemize}
\item All eigenfunctions of the operator $L_\lambda: X \to Y$ are of the form
\begin{align*}
\varphi_k(t) = c e_k(t), \quad c \in \R, \qquad \text{where } e_k(t) = \cos(kt) \text{ for } k \in \mathbb{N}.
\end{align*}
\item The sequence of eigenvalues $ \{\sigma_{\lambda,\gamma}(k)\}_{k \in \mathbb{N}}$ is strictly increasing with
$ \sigma_{\lambda,\gamma}(k)\to  \infty $ as $k\to\infty$. In particular, each  $\sigma_{\lambda,\gamma}(k)$  is a simple eigenvalue.
\item Moreover, the kernel of $L_{\lambda_{*}}$ is given by $N (L_{\lambda_{*}})= \R\cos(\cdot)$.
\end{itemize}
\end{corollary}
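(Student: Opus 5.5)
The plan is to reduce everything to Fourier analysis on $2\pi$-periodic even functions, using that $L_\lambda$ is a Fourier multiplier. First I will observe from \eqref{eq:reg-func-cyliner} that $L_\lambda$ commutes with translations $t\mapsto t-r$. Its nonlocal part is a convolution of $w$ with the integrable even kernel
\[
r\mapsto \lambda\int_{\mathbb{S}^1}G_\kappa\big((r^2+\lambda^2|e_1-\theta|^2)^{1/2}\big)\mathrm{d}\theta ,
\]
which is integrable by \eqref{eq:Bessel-Eiegn9} and the finiteness of $T^0_0$, plus a multiple of the identity. Hence for $w\in X$ with Fourier cosine expansion $w(t)=\sum_{k\ge0} a_k\cos(kt)$, where $a_k=\frac{1}{\pi}\int_0^{2\pi}w(t)\cos(kt)\,\mathrm{d}t$ (halved for $k=0$), the computation of Lemma~\ref{lem:def-linearized-eigen} applied coefficientwise gives
\[
\widehat{L_\lambda w}(k)=\sigma_{\lambda,\gamma}(k)\,a_k ,\qquad k\in\N .
\]
To justify this, I will pair $L_\lambda w$ with $\cos(k\cdot)$ on $[0,2\pi]$. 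I then integrate by parts in the $-w''$ term and use Fubini together with periodicity in the convolution term. This transfers the operator onto $\cos(k\cdot)$, and Lemma~\ref{lem:def-linearized-eigen} applies; no convergence of the Fourier series is needed.

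Next, let $\varphi\in X$, $\varphi\neq0$, satisfy $L_\lambda\varphi=\sigma\varphi$. Taking coefficients gives $(\sigma_{\lambda,\gamma}(k)-\sigma)a_k=0$ for every $k$. By Theorem~\ref{thm:Lambda-star}, for $\gamma\in(0,\gamma_*)$ the map $k\mapsto\sigma_{\lambda,\gamma}(k)$ is strictly increasing, hence injective. So at most one index $k_0$ has $\sigma_{\lambda,\gamma}(k_0)=\sigma$, and $a_k=0$ for all $k\neq k_0$. Since a continuous periodic function is determined by its Fourier coefficients (uniqueness theorem, e.g.\ via Fejér means), $\varphi=a_{k_0}\cos(k_0\cdot)$. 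Evenness has already removed the sine modes, since $X$ consists of even functions. This proves the first bullet, and also the simplicity assertion: each eigenspace is $\R e_k$.

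For the second bullet, strict monotonicity is exactly Theorem~\ref{thm:Lambda-star}, and $\sigma_{\lambda,\gamma}(k)\to\infty$ follows from \eqref{eq:eigen-p3} in Proposition~\ref{prop:asym-eigen-val}. For the third bullet, I take $\lambda=\lambda_*$. The kernel is the eigenspace for $\sigma=0$, and by \eqref{eq:sk-neq-0} the only index with $\sigma_{\lambda_*,\gamma}(k)=0$ is $k=1$, so $N(L_{\lambda_*})=\R\cos(\cdot)$.

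The main obstacle I expect is the rigorous Fourier-coefficient identity for a general $w\in X$ rather than for a single $\cos(k\cdot)$. This requires the kernel to be in $L^1(\R)$, so that Fubini is legitimate. The singularity of $G_\kappa$ at $r=0$ when $\theta=e_1$ is integrable because the $(r,\theta)$-integral reduces to $T^0_0(\lambda\kappa)<\infty$, and $|e_1-\theta|^2$ only improves this. Everything else follows from results already established.
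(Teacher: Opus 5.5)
Your proposal is correct and follows essentially the same route as the paper: expand the eigenfunction in its cosine series, show that $L_\lambda$ acts on the $k$-th coefficient by multiplication with $\sigma_{\lambda,\gamma}(k)$, use the strict monotonicity from Theorem~\ref{thm:Lambda-star} to force a single nonzero coefficient, and read off the kernel at $\lambda_*$ from \eqref{eq:sk-neq-0}. The only difference is how the coefficient identity is justified: you pair with $\cos(k\cdot)$ and apply Fubini with the $L^1$ kernel, whereas the paper passes $L_\lambda$ through the Fourier series by ``linearity and continuity''. Your version is the more careful one, since the Fourier series of a $C^{2,\alpha}$ function need not converge in the $C^{2,\alpha}$ norm.
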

\begin{proof}
Let $\varphi\in X\setminus\{0\}$ be an eigenfunction of $L_\lambda $ with respect an eigenvalue $b_{\lambda,\gamma}$. The function $\varphi\in X$  is periodic, $C^2$ and even so, by Fourier decomposition it can written as
\begin{align}\label{eq:eigen-fourier-decom}
\varphi(t) =\sum_{k=0}^{\infty} c_k e_k(t) \quad
\quad c_k\in\R.
\end{align}
Necessarily,   there is $k_0\in\mathbb{N}$ such that, $c_{k_0}\neq 0$ since  $\varphi\neq 0$. Next, by Lemma \ref{lem:def-linearized-eigen}, $L_\lambda(e_k) = \sigma_{\lambda,\gamma}(k)e_k$. By the linearity and continuity of $L_\lambda$, we get $L_\lambda(\varphi) = b_{\lambda,\gamma}\varphi$ if and only if
\begin{align}
\sum_{k=0}^{\infty}c_k\left( b_{\lambda,\gamma}-\sigma_{\lambda,\gamma}(k)\right) e_k = 0.
\end{align}
The uniqueness of Fourier decomposition, leads to the system of equation
\begin{align}
c_k\left( b_{\lambda,\gamma}-\sigma_{\lambda,\gamma}(k)\right)= 0\qquad\textrm{ for every $k\in\mathbb{N}.$}
\end{align}
  Since $c_{k_0}\neq 0$ we get  $b_{\lambda,\gamma}=\sigma_{\lambda,\gamma}(k_0). $ Meanwhile, Proposition \eqref{prop:asym-eigen-val} implies $\sigma_{\lambda,\gamma}(k)\to \infty$
as $k\to \infty$, while from Theorem \ref{thm:Lambda-star} the map $k\mapsto \sigma_{\lambda,\gamma}(k)$ is strictly increasing. Hence,
\begin{align*}
b_{\lambda,\gamma}:=\sigma_{\lambda,\gamma}(k_0) \not=\sigma_{\lambda,\gamma}(k)\quad\textrm{ for every $k\not= k_0$}.
\end{align*}
It follows that  $c_k  = 0$ when $k\not=k_0$.
Inserting all in \eqref{eq:eigen-fourier-decom}  we deduce that $ \varphi(t) = c_{k_0}e_{k_0}(t)$  and $b_{\lambda,\gamma}
=\sigma_{\lambda,\gamma}(k_0)$. The latter also implies that each $\sigma_{\lambda, \gamma}(k)$ eigenvalue is simple.
In particular, since $ L_{\lambda_*}(\cos(\cdot)) = \sigma_{\lambda_*, \gamma}(1)\cos(\cdot) = 0$ by Theorem~\ref{thm:Lambda-star}, the simplicity of$\sigma_{\lambda_*, \gamma}(1)$ implies that  we have  $\varphi \in N(L_{\lambda_*})= \R\cos(\cdot)$.
\end{proof}

\section{Proof of Theorem \ref{thm:main-thm}: Existence of periodic interface}\label{sec:main-result}
Recall that the spaces $X$ and $Y$ are subspaces of even and periodic functions of H\"older's spaces $C^{2,\alpha}(\mathbb{R})$ and $C^{0,\alpha}(\mathbb{R}),$ respectively. We introduce the closed hyperspaces
\begin{align*}
X^\perp &:= \Big\{u\in X \,:\, \int_0^{2\pi}u(t)\cos(t) \mathrm{d}t = 0\Big\},
\\
Y^\perp &:= \Big\{u\in Y \,:\, \int_0^{2\pi}u(t)\cos(t)\mathrm{d}t = 0\Big\}.
\end{align*}
Note that $X^\perp$ and $Y^\perp$ represent the kernels of the continuous linear form
$u \mapsto \int_0^{2\pi} u(t)\cos(t)\mathrm{d}t$
on $X$ and $Y$, respectively.
Moreover, $\cos(\cdot)$ belongs to neither $X^\perp$ nor $Y^\perp$. In other words, $X^\perp$ and $Y^\perp$ are hyperplanes in $X$ and $Y$, with common complement $\mathbb{R}\cos(\cdot)$. Thence,
\begin{align*}
X = X^\perp \oplus \mathbb{R}\cos(\cdot) \quad \text{and} \quad Y = Y^\perp \oplus \mathbb{R}\cos(\cdot).
\end{align*}
\subsection{Kernel and range of the linear operator \texorpdfstring{$L_{\lambda_*}$}{L-lambda*}}
In the sequel, $\gamma\in(0,\gamma_*)$ is fixed and $\lambda_*= \lambda_*(\gamma)$ is the unique $\lambda>0$ as in Theorem \ref{thm:Lambda-star}. We want to characterize of the kernel and the range of the operator $L_{\lambda_*}$.
We will need the following proposition.

\begin{proposition}\label{pro:F-in-Holder}
Let $w\in C^{1,0}(\mathbb{R})$ and consider the  function $F:\R\to\R$ with
\begin{align*}
F(t):=\int_{\mathbb{R}}\int_{\mathbb{S}^1}(w(t)-w(t-r)) G_\kappa\big((r^2+\lambda ^2_*|e_1-\theta|^2)^{1/2} \big)\mathrm{d}r \mathrm{d}\theta.
\end{align*}
Then  $F$ belongs to the log-H\"older space $C^{0, \log}(\R)$, that is, $F$ is bounded and satisfies the following log-H\"older condition:
\begin{align*}
|F(t_1)-F(t_2)|\leq C| t_1-t_2|\big|\log(| t_1-t_2|) \big|\,  \|w'\|_{L^\infty(\R)} \qquad\textrm{for \,\,$|t_1-t_2| \leq e^{-1}$},
\end{align*}
for some generic constant $C$. Moreover,  for every $\alpha\in[0,1)$  we have $F\in C^{0,\alpha}(\R)$ and
\begin{align*}
\|F\|_{C^{0,\alpha}(\R)}\leq C  \|w'\|_{L^\infty(\R)}.
\end{align*}
\end{proposition}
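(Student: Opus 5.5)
First I would collapse the $\theta$-integral into a one-dimensional kernel. Set $p_\theta=|e_1-\theta|$ and
\begin{align*}
k(r):=\int_{\mathbb{S}^1} G_\kappa\big((r^2+\lambda_*^2p_\theta^2)^{1/2}\big)\,\mathrm{d}\theta ,
\end{align*}
which is even and nonnegative, so that $F(t)=\int_{\R}(w(t)-w(t-r))k(r)\,\mathrm{d}r$.

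The key facts about $k$ are these.
\begin{itemize}
\item $k(r)\lesssim e^{-\kappa|r|}/|r|$ for $|r|\ge 1$, since $G_\kappa(\rho)\le G_\kappa(|r|)$.
\item $k(r)\lesssim 1+|\log|r||$ for $|r|\le 1$. Indeed, near $\theta=e_1$ one has $p_\theta\approx|\theta-e_1|$, so $k(r)\approx\int_{-1}^{1}(r^2+\lambda_*^2 s^2)^{-1/2}\mathrm{d}s\approx\log(1/|r|)$.
\item Both $k$ and $r\,k(r)$ are integrable on $\R$, as follows from the two bounds above. Equivalently, by \eqref{eq:Bessel-Eiegn9}, $\int_\R k=2T^0_0(\lambda_*\kappa)<\infty$.
\end{itemize}

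Boundedness is then immediate: $|w(t)-w(t-r)|\le \|w'\|_\infty |r|$, so $|F(t)|\le \|w'\|_\infty\int_\R |r|k(r)\,\mathrm{d}r$.

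For the log-Hölder estimate, fix $h=|t_1-t_2|\le e^{-1}$ and split the $r$-integral into $|r|\le h$ and $|r|>h$.

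On $|r|\le h$, I bound each of the two terms by $\|w'\|_\infty|r|k(r)$. This gives a contribution of at most
\begin{align*}
2\|w'\|_\infty\int_{|r|\le h}|r|\big(1+|\log|r||\big)\,\mathrm{d}r\lesssim \|w'\|_\infty\, h^2|\log h|,
\end{align*}
which is better than needed.

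On $|r|>h$, I use $|(w(t_1)-w(t_1-r))-(w(t_2)-w(t_2-r))|\le |w(t_1)-w(t_2)|+|w(t_1-r)-w(t_2-r)|\le 2\|w'\|_\infty h$. The contribution is then bounded by $2\|w'\|_\infty h\int_{|r|>h}k(r)\,\mathrm{d}r$. That alone gives a Lipschitz bound, since $k\in L^1$. It is the sharper log-Hölder rate in the statement, which allows the crude estimate $\int_{h<|r|<1}k\lesssim |\log h|+1$ if one only uses $k(r)\lesssim 1/|r|$ near zero (as the pointwise bound $G_\kappa\le 1/|r|$ would suggest). Either way the result is $|F(t_1)-F(t_2)|\le C\|w'\|_\infty h|\log h|$ for $h\le e^{-1}$.

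Finally, for $\alpha\in[0,1)$ the Hölder seminorm follows from $h|\log h|\le C_\alpha h^\alpha$ for $h\le e^{-1}$. For $h>e^{-1}$ one instead uses $|F(t_1)-F(t_2)|\le 2\|F\|_\infty\le C\|w'\|_\infty h^\alpha e^{\alpha}$. Together with boundedness this yields $\|F\|_{C^{0,\alpha}}\le C\|w'\|_{L^\infty}$.

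The main obstacle is the local integrability of the kernel $k$ near $r=0$. There $G_\kappa$ is singular as $(r,\theta)\to(0,e_1)$, and one must check that the $\theta$-integration indeed produces only a logarithmic singularity. That is what makes both $\int |r|k$ finite and the near-diagonal piece small. Handling the $\theta$-integral via the parametrization $\theta_1=1-s^2/2$, as in \eqref{eq:Expres-Tn}, should settle this.
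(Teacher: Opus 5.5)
Your proof is correct, but it takes a different route from the paper's.

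\textbf{The paper's route.} The paper never integrates out $\theta$. It uses the monotonicity of $G_\kappa$ to replace the kernel, pointwise in $\theta$, by $G_\kappa(|r|)=e^{-\kappa|r|}/|r|$. It then combines this with $|[w(t_1)-w(t_1-r)]-[w(t_2)-w(t_2-r)]|\le 2\|w'\|_{L^\infty}\min(h,|r|)$. The $|\log h|$ in the statement is exactly the cost of the non-integrable singularity, through $h\int_h^1 \mathrm{d}r/r$.

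\textbf{Your route.} You keep the angular integration and observe that the collapsed kernel $k$ has only a logarithmic singularity at $r=0$, so $k\in L^1(\R)$. This already follows from Tonelli and \eqref{eq:Bessel-Eiegn9}: $\|k\|_{L^1}=2T^0_0(\lambda_*\kappa)<\infty$. The pointwise analysis of $k$ near $0$ is then not really needed. For $\int|r|k$ the paper's crude bound $|r|k(r)\le 2\pi e^{-\kappa|r|}$ suffices.

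Once $k\in L^1$, the bound $2\|w'\|_{L^\infty}h$ can be used for every $r$, so your split at $|r|=h$ is unnecessary. You get a genuine Lipschitz estimate $|F(t_1)-F(t_2)|\le 4T^0_0(\lambda_*\kappa)\|w'\|_{L^\infty}|t_1-t_2|$. This implies the stated log-H\"older bound, since $|\log h|\ge 1$ for $h\le e^{-1}$. It also gives the $C^{0,\alpha}$ bound, even for $\alpha=1$. Your closing ``either way'' undersells this. In fact $F=\|k\|_{L^1}w-k*w$ is $C^1$ whenever $w\in C^1$, with $\|F'\|_{L^\infty}\le 2\|k\|_{L^1}\|w'\|_{L^\infty}$.

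\textbf{Trade-off.} The paper's constants are independent of $\lambda_*$ and need no angular analysis. Yours depend on $\lambda_*$ through $T^0_0(\lambda_*\kappa)\sim -2\pi\log(\lambda_*\kappa)$ as $\lambda_*\to 0$. That logarithmic blow-up is precisely the log loss the paper pays uniformly. Since $\lambda_*$ is fixed here, your sharper estimate costs nothing.
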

\begin{proof}
 First of all, since  $r\mapsto G_\kappa (r)=r^{-1}e^{-\kappa r}$
is  decreasing on $(0,\infty)$ we get
\begin{align*}
 G_\kappa\left((r^2+\lambda ^2_*|e_1-\theta|^2)^{1/2}\right) \leq G_\kappa(|r|).
\end{align*}
Meanwhile, since  $w\in C^{1,0}(\mathbb{R})$, we find that
\begin{align}\label{eq:estima-in-Holder1}
|w(t)-w(t-r)|=\Big|r \int_{0}^1  w'(  t-\varrho r) d\varrho\Big|\leq  |r| \|w'\|_{L^\infty(\R)}.
\end{align}
This together with the previous estimate imply that $F$ is bounded  with
\begin{align}\label{eq:F-in-L-infty}
|F(t)|
&\leq |\mathbb{S}^1|\|w'\|_{L^\infty(\R)} \int_{\mathbb{R}} \exp(-\kappa|r|)\mathrm{d}r =  4\pi \kappa^{-1}\|w'\|_{L^\infty(\R)}.
\end{align}
In particular this implies
\begin{align}\label{eq:F-holder-infty}
|F(t_1)-F(t_2)|\leq 8\pi  e^\alpha\|w'\|_{L^\infty(\R)} |t_1-t_2|^\alpha\qquad\textrm{for $|t_1-t_2|\geq e^{-1}$}.
\end{align}
Now, assume  $a:=|t_1-t_2|\leq e^{-1}$ so that $-\log(a)\geq 1$.  For $t_1,t_2\in \R$, we get
\begin{align*}
|(w(t_1)-w(t_1-r))- (w(t_2)-w(t_2-r) )|
&=  \Big|\Big(\int_{t_2}^{t_1} \hspace*{-1ex}- \int_{t_2-r}^{t_1-r}\hspace*{-0.5ex}\Big) w'(t)\mathrm{d}t\Big|\\
&\leq   2 |t_1-t_2|  \|w'\|_{L^\infty(\R)}.\nonumber
\end{align*}
Combining this with the estimate \eqref{eq:estima-in-Holder1},  and $a=|t_1-t_2|$ we obtain
\begin{align*}
|[w(t_1)-w(t_1-r)]- [w(t_2)-w(t_2-r) ] | \leq  2 \|w'\|_{L^\infty(\R)} \min (a,|r|).
\end{align*}
From this and the fact that $-\log(a)\geq 1$, we get
\begin{align*}
|F(t_1)-F(t_2)|&\leq 4\pi  \|w'\|_{L^\infty(\R)} \int_{\mathbb{R}} \min (a,|r|)    |r|^{-1} e^{-\kappa |r|}\mathrm{d}r \\
&=8\pi\|w'\|_{L^\infty(\R)}  \left( \int_{0}^{a}  e^{-\kappa r} \mathrm{d}r + a\int_{a}^{1} r^{-1}   e^{-\kappa r}\mathrm{d}r +a\int_{ 1}^{\infty}\hspace{-1ex} r^{-1} e^{-\kappa r}\mathrm{d}r\right)\\
& \leq 8\pi\|w'\|_{L^\infty(\R)}  \left( \int_{0}^{a}  \mathrm{d}r + a\int_{a}^{1} r^{-1}   \mathrm{d}r +a\int_{ 1}^{\infty} e^{-\kappa r} \mathrm{d}r\right)\\
&= 8\pi\|w'\|_{L^\infty(\R)} (-a\log(a)+ a(1+\kappa^{-1}e^{-\kappa})) \\
&\leq -a\log(a) 8\pi(2+\kappa^{-1}e^{-\kappa}) \|w'\|_{L^\infty(\R)}.
\end{align*}
That is $F$ satisfies the log-H\"older condition
\begin{align*}
|F(t_1)-F(t_2)|\leq C| t_1-t_2|\big|\log(| t_1-t_2|) \big|\,  \|w'\|_{L^\infty(\R)} \qquad\textrm{for \,\,$|t_1-t_2| \leq e^{-1}$}
\end{align*}
with $C=8\pi(2+\kappa^{-1}e^{-\kappa})$. Since $\max_{x\in [0,1]}(-x \log x) = e^{-1}$, substituting $x = a^{1-\alpha}$ for $\alpha \in [0,1)$ and $a \in [0,1]$ gives $-a\log a \leq \frac{a^{\alpha}}{(1-\alpha)e}$. Accordingly,  we deduce that
\begin{align*}
|F(t_1)-F(t_2)|\leq \frac{8\pi(2+\kappa^{-1}e^{-\kappa})}{(1-\alpha)e}| t_1-t_2|^\alpha\,  \|w'\|_{L^\infty(\R)} \qquad\textrm{for \,\,$|t_1-t_2| \leq e^{-1}$}.
\end{align*}
Altogether with the estimates \eqref{eq:F-in-L-infty} and  \eqref{eq:F-holder-infty}, we obtain $F\in C^{0,\alpha}(\R)$ with
\begin{align*}
|F(t_1)-F(t_2)|&\leq C_\alpha\|w'\|_{L^\infty(\R)} |t_1-t_2|^\alpha\quad \text{and}\quad
|F(t_1)|\leq C_\alpha\|w'\|_{L^\infty(\R)} \quad\quad\textrm{for\,\,  $t_1,t_2\in \R$},
\end{align*}
with $C_\alpha=\max\big(8\pi  e^\alpha, \tfrac{8\pi(2+\kappa^{-1}e^{-\kappa})}{(1-\alpha)e}\big)$. Hence we have $\|F\|_{C^{0,\alpha}} \leq 2C_\alpha\|w'\|_{L^\infty(\R)}$.
\end{proof}

\noindent The most important feature  of spaces $X^\perp$ and $Y^\perp$ is given by  the following theorem.
\begin{theorem}\label{thm:rang-L-star}
Let $\gamma\in (0,\gamma_*)$ and $\lambda_{*}=\lambda_*(\gamma)$ be as in Theorem \ref{thm:Lambda-star}.  For every $f\in Y^\perp$ there exists a unique $w\in X^\perp$ such that
\begin{align}\label{eq:L-star-w-eq-f-cyl}
L_{\lambda_*} (w)=f \quad \text{ on  $~\mathbb{R}$}.
\end{align}
In addition, the restriction $L_{\lambda_*} : X^\perp \to Y^\perp$ is bijective with range $R(L_{\lambda_*}) = Y^\perp$.
\end{theorem}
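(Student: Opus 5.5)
The plan is to diagonalize $L_{\lambda_*}$ in the cosine basis and invert it coefficient by coefficient, then use elliptic bootstrapping (together with Proposition~\ref{pro:F-in-Holder}) to show that the inverse lands in $C^{2,\alpha}$. Write
\[
L_{\lambda_*}w=-w''+c_*w-\gamma\lambda_*F_w ,\qquad
c_*:=-\frac{1}{\lambda_*^2}+\gamma\lambda_* T^0_2(\lambda_*\kappa),
\]
where $F_w$ denotes the function $F$ of Proposition~\ref{pro:F-in-Holder} built from $w$. The nonlocal part is a convolution in $t$ with the integrable kernel $r\mapsto\int_{\mathbb{S}^1}G_\kappa\big((r^2+\lambda_*^2|e_1-\theta|^2)^{1/2}\big)\,\mathrm{d}\theta$, whose $L^1$ norm is at most $4\pi\kappa^{-1}$. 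Hence $L_{\lambda_*}$ commutes with translations.

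\textbf{Step 1 (Fourier multiplier).} For $w\in X$ with cosine coefficients $w_k=\frac{1}{\pi}\int_0^{2\pi}w(t)\cos(kt)\,\mathrm{d}t$ (with the usual factor $\frac{1}{2\pi}$ for $k=0$), I will show that the $k$-th coefficient of $L_{\lambda_*}w$ equals $\sigma_{\lambda_*,\gamma}(k)\,w_k$. This follows by Fubini, which is justified by the bound $|w(t)-w(t-r)|\le |r|\,\|w'\|_\infty$ and the integrability of $e^{-\kappa|r|}$, combined with the computation of Lemma~\ref{lem:def-linearized-eigen}. In particular the first coefficient of $L_{\lambda_*}w$ is $\sigma_{\lambda_*,\gamma}(1)\,w_1=0$, so $L_{\lambda_*}(X)\subset Y^\perp$ and a fortiori $L_{\lambda_*}(X^\perp)\subset Y^\perp$.

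\textbf{Step 2 (injectivity).} If $w\in X^\perp$ and $L_{\lambda_*}w=0$, then $\sigma_{\lambda_*,\gamma}(k)\,w_k=0$ for all $k$. By \eqref{eq:sk-neq-0}, $\sigma_{\lambda_*,\gamma}(k)\neq0$ for every $k\neq1$, so $w_k=0$ for $k\ne1$; and $w_1=0$ because $w\in X^\perp$. Hence $w=0$. This is also a direct consequence of Corollary~\ref{cor:eigenfunct-of-Llam}.

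\textbf{Step 3 (construction of a weak solution).} Given $f\in Y^\perp$, so that $f_1=0$, set
\[
w:=\sum_{k\neq1}\frac{f_k}{\sigma_{\lambda_*,\gamma}(k)}\cos(kt).
\]
By Proposition~\ref{prop:asym-eigen-val} and the monotonicity in Theorem~\ref{thm:Lambda-star}, there is $c>0$ with $|\sigma_{\lambda_*,\gamma}(k)|\ge c(1+k^2)$ for all $k\ne1$. Since $f\in L^2(0,2\pi)$, this gives $w\in H^2_{\rm per}$, hence $w\in C^{1,1/2}$. The function $w$ is even, $2\pi$-periodic and has $w_1=0$. The identity $L_{\lambda_*}w=f$ then holds coefficientwise, and by Step 1 (which only needs $w\in C^1$ with $w''\in L^2$) it holds in the distributional sense.

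\textbf{Step 4 (bootstrap).} Since $w\in C^{1,0}$, Proposition~\ref{pro:F-in-Holder} gives $F_w\in C^{0,\alpha}$. The equation then yields
\[
w''=c_*w-\gamma\lambda_*F_w-f\in C^{0,\alpha},
\]
so $w\in C^{2,\alpha}$ and therefore $w\in X^\perp$. Combined with Step 2, this shows that $L_{\lambda_*}:X^\perp\to Y^\perp$ is a bijection with range $R(L_{\lambda_*})=Y^\perp$.

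\textbf{Main obstacle.} The delicate point is the justification in Steps 1 and 3 that $L_{\lambda_*}$ acts as the multiplier $\sigma_{\lambda_*,\gamma}(k)$ on general functions, and not only on the single modes $\cos(kt)$; this is also what allows the passage from the $H^2$ solution to a pointwise solution. I would handle it through the convolution structure and dominated convergence on the partial Fourier sums $S_Nw$. Because $S_Nw\to w$ in $H^2$, and hence in $C^1$, Proposition~\ref{pro:F-in-Holder} gives $F_{S_Nw}\to F_w$ uniformly. Therefore $L_{\lambda_*}S_Nw\to L_{\lambda_*}w$ in $L^2$, while by Lemma~\ref{lem:def-linearized-eigen} we have $L_{\lambda_*}S_Nw=S_Nf\to f$ in $L^2$. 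This identifies $L_{\lambda_*}w=f$.
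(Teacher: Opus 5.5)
Your proposal is correct and follows the paper's route. You invert coefficientwise using $\sigma_{\lambda_*,\gamma}(k)\neq 0$ for $k\neq 1$, get $H^2$ regularity from $|\sigma_{\lambda_*,\gamma}(k)|\ge c(1+k^2)$, and bootstrap to $C^{2,\alpha}$ via Proposition~\ref{pro:F-in-Holder}. Two of your steps are improvements: reading off $w''=c_*w-\gamma\lambda_*F_w-f\in C^{0,\alpha}$ directly is cleaner than the paper's Schauder/Dirichlet detour, and your Fubini/partial-sum argument actually justifies the multiplier identity that the paper attributes only to ``continuity of $L_{\lambda_*}$'' (Fourier series need not converge in $C^{2,\alpha}$). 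One harmless slip: $4\pi\kappa^{-1}$ bounds the first moment (the integral against $|r|$) of your convolution kernel, not its $L^1$ norm, which is $2T^0_0(\lambda_*\kappa)$ and can be large when $\lambda_*\kappa$ is small; nothing in your argument uses that bound.
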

\begin{proof}
This proof is subdivided into three main steps.

\textit{Step 1. Existence and uniqueness}.
Note that for $w\in X^\perp$ and  $f\in Y^\perp$, $w$ and $f$ are periodic and even with  $ f_1= \int_0^{2\pi}f(t)\cos(t)\, dt=0 $ and $w_1 =\int_0^{2\pi}w(t)\cos(t)\mathrm{d}t= 0.$
The decomposition in  Fourier series, see for instance \cite{Duo01},  infers that
\begin{align*}
f(t)&=f_0+ \sum_{k=2}^\infty f_k \cos( k t),
\quad \text{$f_k =\frac{1}{\pi}\int_0^{2\pi} \hspace*{-1ex}f(t)\cos(kt)\mathrm{d}t$ and $f_0 =\frac{1}{2\pi}\int_0^{2\pi}\hspace*{-1ex}f(t) \mathrm{d}t$},\\
w(t)&=w_0+ \sum_{k=2}^\infty w_k \cos( k t), \quad\text{$w_k =\frac{1}{\pi}\int_0^{2\pi} \hspace*{-1ex}w(t)\cos(kt) \mathrm{d}t$ and $w_0 =\frac{1}{2\pi}\int_0^{2\pi} \hspace*{-1ex}w(t)\mathrm{d}t$}.
\end{align*}
 Given $f$, we seek $w$ such that $L_{\lambda_*}(w) = f$. However, by Lemma \ref{lem:def-linearized-eigen} we  have, $ L_{\lambda_*}( \cos( k t))=  \sigma_{\lambda _*,\gamma}(k) \cos( k t)$, thus the continuity of the linear operator $L_{\lambda_*}$, implies that
\begin{align}\label{eq:Llambda-fourier}
L_{\lambda_*}(w)(t)=\sigma_{\lambda _*,\gamma}(0)w_0+ \sum_{k=2}^\infty w_k \sigma_{\lambda _*,\gamma}(k)\cos(k t),
\end{align}
It follows that $L_{\lambda_*} (w)=f$ if and only if
\begin{align*}
(\sigma_{\lambda _*,\gamma}(0)w_0-f_0)+ \sum_{k=2}^\infty\left( w_k \sigma_{\lambda _*,\gamma}(k)-f_k \right)\cos( k t) =0,
\end{align*}
with $w_1 =\sigma_{\lambda _*,\gamma}(1)=f_1 =  0$. Because $\sigma_{\lambda_*,\gamma}(k) \neq 0$ for all $k \neq 1$ (by \eqref{eq:sk-neq-0}), the uniqueness of the Fourier decomposition implies that
\begin{align*}
w_k= \frac{1}{\sigma_{\lambda _*,\gamma}(k)} f_k \qquad \textrm{ for every $k\neq 1$ and $w_1 = 0$}.
\end{align*}
Finally, $w$ is even, $2\pi$-periodic, $  \int_0^{2\pi}w(t)\cos(t)\, dt=0 $. Moreover, the Fourier decomposition of $f$ guarantees the uniqueness of $w$ satisfying $L_{\lambda_*} (w)=f $ with
\begin{align}\label{eq:express-w}
w(t)=\frac{ f_0}{\sigma_{\lambda _*,\gamma}(0)} + \sum_{k=2}^\infty \frac{ f_k}{\sigma_{\lambda _*,\gamma}(k)} \cos( k t).
\end{align}

\textit{Step 2.  Sobolev regularity: $w\in H^2_{\mathrm{loc}}(\R)\cap  W^{1,\infty}(\R)$}. Since the functions $t\mapsto \cos(kt )$ are smooth, the weak derivatives of $w$ are given as follows
\begin{align*}
w'(t) &= -\sum_{k=2}^\infty \frac{ kf_k}{\sigma_{\lambda _*,\gamma}(k)} \sin( k t)
\quad \text{and}\quad
w''(t)=- \sum_{k=2}^\infty \frac{k^2  f_k}{\sigma_{\lambda _*,\gamma}(k)} \cos( k t).
\end{align*}
Since $\sigma_{\lambda _*,\gamma}(k)\sim \frac{1}{k^2}$ as $k\to \infty$,  by   \eqref{eq:eigen-p3}, it follows that the sequences$ \frac{ 1}{\sigma_{\lambda _*,\gamma}(k)},~~\frac{ k}{\sigma_{\lambda _*,\gamma}(k)}$ and  $\frac{ k^2}{\sigma_{\lambda _*,\gamma}(k)} $ are
bounded,  while the following series converge
\begin{align*}
\sum_{k=2}^\infty \frac{1}{\sigma^2_{\lambda _*,\gamma}(k)} \qquad\textrm{and}\qquad\sum_{k=2}^\infty \frac{ k^2}{\sigma^2_{\lambda _*,\gamma}(k)}.
\end{align*}
Note that $f\in Y^\perp\subset C^{ 0,\alpha}(\R)$ so that $f\in L^2(0,2\pi)$. By Parseval's identity  we get
\begin{align*}
&\|w^{(j)}\|^2_{L^2(0,2\pi)} =\sum_{k=0, \,k\neq 1}^{\infty} \frac{ k^{2j}f_k^2 }{\sigma^2_{\lambda _*,\gamma}(k)} \leq  C\sum_{k=0}^{\infty} f_k^2 =  C\|f\|^2_{L^2(0,2\pi)}\qquad\, j=0,1,2.
\end{align*}
 Accordingly, $w^{(j)} \in L^2(0, 2\pi)$ and hence $w \in H^2(0, 2\pi)$. By periodicity, we have $w \in H^2_{\mathrm{loc}}(\R)$. Meanwhile, Cauchy-Schwarz inequality implies that $w \in W^{1,\infty}(\R)$ since
\begin{align*}
\|w\|^2_{L^\infty(\R)}+\|w'\|^2_{L^\infty(\R)}
&\leq 2\|f\|^2_{L^2(0,2\pi)}\, \sum_{k=2}^\infty \frac{k^2+1}{\sigma^2_{\lambda _*,\gamma}(k)} <\infty.
\end{align*}

 \textit{Step 3. H{\"o}lder regularity: $w\in C^{2,\alpha} (\R)$}. Morrey's embedding, see, e.g., \cite[Theorem 12.55]{Leo17} or \cite[Corollary 9.13]{Bre11}, yields $H^2_{\mathrm{loc}}(\R) \subset C^{1, 1/2}_{\mathrm{loc}}(\R)$. Using H\"{o}lder injection   $C^{1,1/2}_{\mathrm{loc}}(\R)\subset C^{1,0}_{\mathrm{loc}}(\R)$  see for instance \cite[chapter 1]{AdFo03}.
Therefore $w\in C^{1,0}_{\mathrm{loc}}(\R)$ and hence  $w'$ is the derivative in the classical sense.
Moreover, since $w\in W^{1,\infty}(\R)$, i.e., $ w,w'\in L^\infty(\R^d)$ it follows that $w\in C^{1,0}(\R)$.  Let us recall that from the expression \eqref{eq:reg-func-cyliner} we have
\begin{align}\label{eq:lambdastart-F}
L_{\lambda_*}(w)& =-w''- \frac{1}{\lambda_*^2} w+ \gamma  \lambda_*  T_2^0(\lambda_*\kappa)w-
\gamma  \lambda_*  F,
\end{align}
where  $F$ given in Proposition \ref{pro:F-in-Holder}  while $T_2^0(\lambda_*\kappa)$ is  given through  \eqref{eq:I2} and
\eqref{eq:Bessel-Eiegn9}, that is,
\begin{align*}
F(t)&=\int_{\mathbb{R}}\int_{\mathbb{S}^1}(w(t)-w(t-r)) G_\kappa\big((r^2+\lambda ^2_*|e_1-\theta|^2)^{1/2} \big)\mathrm{d}r \mathrm{d}\theta,\\
T_2^0(\lambda_*\kappa)&=\frac{1}{2}\int_{\mathbb{R}}\int_{\mathbb{S}^1}|\theta- e_1|^2  G_\kappa\left((r^2+\lambda_*^2|e_1-\theta|^2)^{1/2} \right)\mathrm{d}r \mathrm{d} \theta.
\end{align*}
Therefore, letting $W= w-w(0)$,  the equation $L_{\lambda_*}(w) = f$ with $C^{0,\alpha}(\R)$ infers
\begin{align*}
\begin{cases}
- W''+ \big( \gamma \lambda_*   T_2^0(\lambda_*\kappa)- \frac{1}{\lambda_*^2}\big)W = \gamma\lambda_* F+ f+c &\text{ in } \,\,(0,2\ell\pi),\\
\quad W(0)= W(2\ell\pi)= 0,&
\end{cases}
\end{align*}
with $\ell\in \mathbb{N}$,  $c= -( \gamma \lambda_*   T_2^0(\lambda_*\kappa)- \frac{1}{\lambda_*^2})w(0)$, $w\in C^{1,0}(\R)$, $f\in C^{0,\alpha}(\R)$ and  $F\in C^{0,\alpha}(\R)$ by  Proposition \ref{pro:F-in-Holder}. That is $\gamma\lambda_* F+ f\in C^{0,\alpha}(\R)$.
 From standard Schauder estimates in elliptic regularity theory, see \cite[Theorem 9.33]{Bre11}, we obtain $W \in C^{2,\alpha}(0,2\ell\pi)$. Hence, by periodicity, $w \in C^{2,\alpha}_{\mathrm{loc}}(\mathbb{R})$. However, using \eqref{eq:lambdastart-F}, we find that $w''$ is also bounded on $\R$, since $f$, $F$, and $w$ are bounded. This boundedness, combined with the fact that $w'' \in C^{0,\alpha}_{\mathrm{loc}}(\R)$, implies that $w'' \in C^{0,\alpha}(\R)$. This together with the fact that $w\in C^{1,0}(\R)$, we obtain $w\in C^{2,\alpha}(\R)$.  Consequently, because $w$ is $2\pi$-periodic, even, $\int_0^{2\pi} w(t)\cos(t)\,\mathrm{d}t = 0$, we conclude there exists a unique $w \in X^\perp$ given by \eqref{eq:express-w} such that  $L_{\lambda_*}(w) = f$.

From this we conclude  that $Y^\perp \subset R(L_{\lambda_*})$.
Conversely, for $w\in X^\perp\subset C^{2,\alpha}(\R)$ we have $w,w''\in C^{0,\alpha}(\R)$, while
$F\in C^{0,\alpha}(\R)$ by Proposition \ref{pro:F-in-Holder}. Hence, $L_{\lambda_*}(w) =-w''- \frac{1}{\lambda_*^2} w+ \gamma  \lambda_*  T_2^0(\lambda_*\kappa)w-
\gamma  \lambda_*  F$ also belongs to $C^{0,\alpha}(\R)$.
Moreover, since $w$ is even, $2\pi$-periodic and $\int_0^{2\pi} w(t)\cos(t)\mathrm{d}t=0$, the Fourier decomposition of $L_{\lambda_*}(w)$ takes the form \eqref{eq:Llambda-fourier}. The latter expression clearly shows that $L_{\lambda_*}(w)$ is even, $2\pi$-periodic and $\int_0^{2\pi} L_{\lambda_*}(w)(t)\cos(t)\mathrm{d}t=0$. Thus $L_{\lambda_*}(w)\in Y^\perp$. We deduce that $L_{\lambda_*}$ is a bijection and
$R(L_{\lambda_*})=Y^\perp $.
\end{proof}


\subsection{Proof of Theorem \ref{thm:main-thm}}
To solve the equation  \eqref{eq:new-equation-to solve} using Crandall-Rabinowitz Theorem \ref{thm:crandall}, we fix $\gamma\in (0,\gamma_*)$ and $\lambda _*=\lambda _*(\gamma)$ as before and  we define the function
\begin{align*}
\mathcal{G}_\gamma:  (-\lambda_*,\lambda_* )
\times X \to Y, \quad \mathcal{G}_\gamma(\nu ,u) :=\mathcal{F}(\nu +\lambda_*+ u)
-\mathcal{ F}(\nu+\lambda_*).
\end{align*}
To solve the equation $\mathcal{G}_\gamma(\nu, u) = 0$, we first collect the results established thus far.
\begin{itemize}
\item[(a)]  Clearly,  $\mathcal{G}_\gamma(\nu ,0)= \mathcal{G}_\gamma(0 ,0)=0 $,  for every $\nu \in (-\lambda_*,\lambda_* ).$
 The smoothness of  $\cF$, see Theorem \ref{thm:diff-cF},  readily implies the smoothness  of $\cG_\gamma$.
In particular,  we  have
\begin{align*}
\quad \quad  D_u \cG_\gamma(\nu,u)= D\cF(\nu+\lambda_*+ u)
\, \quad  \text{and}\, \quad
\partial_\nu D_u\, \cG_\gamma(\nu,u)(\cdot) =D^2\cF(\nu+
\lambda_* +u)(\cdot,1).
\end{align*}
 \item[(b)] In particular, we find that $D_u \cG_\gamma(\nu,0) = D\cF(\nu+\lambda_*) =L_{\nu+\lambda_*}$  and
\begin{align*}
D_u\mathcal{G}_\gamma(0,0) = D\mathcal{F}(\lambda_*) = L_{\lambda_*}.
\end{align*}
According to Theorem \ref{thm:rang-L-star} we have
$$R(L_{\lambda_*})= R(D_u \mathcal{G}_\gamma(0,0 ))= Y^\perp.$$
We also know that,  $Y= Y^\perp\oplus \R\cos(\cdot)$, that is, $Y^\perp$ is a complement of $ \mathbb{R}\cos(\cdot)$.  Namely $\textrm{codim}R(D_u \mathcal{G}_\gamma(0,0 ))=1 $. Altogether  with Corollary \ref{cor:eigenfunct-of-Llam} we find that
 $$ N(D_u \mathcal{G}_\gamma(0,0 ))=Y\backslash R(D_u \mathcal{G}_\gamma(0,0 ))= \mathbb{R} \cos(\cdot).$$

\item[(c)]
By Lemma \ref{lem:def-linearized-eigen}, $\sigma_{\nu + \lambda_*,\gamma}(1)$ is an eigenvalue associated to eigenfunction $\cos(\cdot)$, that is
$$D_u \mathcal{G}_\gamma(\nu,0 )(\cos(\cdot))= L_{\nu + \lambda_*}(\cos(\cdot))= \sigma_{\nu + \lambda_*,\gamma}(1 )\cos(\cdot).$$
Since $\cG_\gamma$ is $C^\infty$ with respect to the H\"older norm, we can compute the pointwise directional derivative
\begin{align*}
\partial_\nu D_u\, \cG_\gamma(0,0)(\cos(\cdot))
&= \lim_{\nu \to 0}\frac{D_u\, \cG_\gamma(\nu,0)-D_u\, \cG_\gamma(0,0)  }{\nu}(\cos(\cdot))\\
&=  \lim_{\nu \to 0}\frac{L_{\nu+\lambda_*} (\cos(\cdot))-L_{\lambda_*}(\cos(\cdot))  }{\nu}
\\&= \partial_\nu [\sigma_{\nu + \lambda_*,\gamma}(1 )]\cos(\cdot) |_{\nu=0}
= \partial_\nu\sigma_{\lambda_*,\gamma}(1) \cos(\cdot).
\end{align*}
By  Lemma \ref{lem:monoton-lambda}, we have
$d_*: =\partial_\nu \sigma_{\lambda_*,\gamma}(1) >0, $ then
$$ \partial_\nu D_u \mathcal{G}_\gamma(0,0 )[\cos(\cdot)]= d_*  \cos(\cdot) \notin Y^\perp=  R(D_u \mathcal{G}_\gamma
(0 ,0 )).$$
\end{itemize}
By the Crandall-Rabinowitz Theorem \ref{thm:crandall} (see \cite[Theorem 1.7]{CrRa71}), we then find ${\varepsilon_0}>0$ and smooth curves
\begin{align*}
&\nu :(-\varepsilon_0, \varepsilon_0) \to (0,\infty)\quad s \mapsto \nu_s
\\ 
&\omega :(-\varepsilon_0, \varepsilon_0) \to X^\perp \quad s \mapsto \omega_s.
\end{align*}
such that $\nu_0= 0,$ $\omega_0\equiv 0$ and   setting $u_s = s (\cos(\cdot) + \omega_s)$ we have
\begin{align*}
\cG_\gamma(\nu_s, u_s) = \cF(\nu_s+\lambda_*+u_s) - \cF(\nu_s+\lambda_*)=0 \quad\text{for all}\quad   s \in (-{\varepsilon_0},{\varepsilon_0})
\end{align*}
 In addition, the following hold
\begin{enumerate}
\item Since $\omega_s\in X^\perp$, $\omega_s$ is even, $2\pi$-periodic, $\omega_s\in C^{2,\alpha}(\R)$ and
\begin{align*}
\int_0^{2\pi} \omega_s(t)\cos(t)\mathrm{d}t=0.
\end{align*}
\item
Letting $\lambda_s =  \nu_s+\lambda_*$ and  $\varphi_s=\lambda_s + u_s $ one finds that, $\lambda_0=\lambda_*$, $u_0\equiv \lambda_*$, $\varphi_0\equiv \lambda_*$  and
\begin{align*}
\cG_\gamma(\nu_s, u_s) = \cF(\varphi_s) - \cF(\lambda_s)=0 \quad\text{for all}\quad   s \in (-{\varepsilon_0},{\varepsilon_0}).
\end{align*}
\item
By continuity, $\lim_{s \to 0} \varphi_s = \lambda_* > 0$ in $C^{2,\alpha}(\mathbb{R})$, and  choosing $\varepsilon_0 > 0$ sufficiently small up to a relabeling yields
$$\|\varphi_s- \lambda_*\|_{L^\infty(\R)} \leq \|\varphi_s- \lambda_*\|_{C^{2,\alpha}(\R)} < \frac{\lambda_*}{2}$$ for all $s \in (-\varepsilon_0, \varepsilon_0)$, which implies $\varphi_s > \frac{\lambda_*}{2} > 0$ for every $s \in (-\varepsilon_0, \varepsilon_0)$.
\item  Therefore, $\varphi_s\in \mathcal{O}\cap X$ satisfies  $ \cF(\varphi_s) = C_{\lambda_s,\kappa}$, where   the constant $C_{\lambda_s,\kappa}:=\cF(\lambda_s)$ is given as in Proposition \ref{prop:var-functionals} and clearly verifies $C_{\lambda_s,\kappa}\leq C_{\lambda_*,\kappa}$.
\item
From this result we conclude that, the family of unbounded domains $(\Omega_{\varphi_s})_{s\in (-\varepsilon_0,\varepsilon_0)}$ with   $\varphi_s\in \cO\cap X$ are stationary sets for the functional energy \eqref{eq:Geom-pblem-interface-like-s-perim} or solutions of the geometrical  variational problem \eqref{eq:main-problem}.
\end{enumerate}
\begin{remark}
The Fourier series decomposition suggests that each $\omega_s$ is of the form
\begin{align*}
\omega_s(t)= a_{s,0}+ \sum_{k=2}^N a_{s,k} \cos(kt)\qquad a_{s,k}\in \R, \quad N\geq 2.
\end{align*}
\end{remark}
\section{Proof of Theorem \ref{thm:diff-cF}: Differentiability of  \texorpdfstring{$\cF$}{F}}
\label{sec:different-F}
In this section we only prove Theorem \ref{thm:diff-cF}, that is,  smoothness of $\mathcal{F}: \cO\cap X\to Y$ with $\mathcal{F}(\varphi)=\cF_0(\varphi)+\gamma\cF_1(\varphi)$.
Note that for any $\varphi \in \mathcal{O} \cap X$, there exists a $\delta > 0$ such that $\varphi \ge \delta$. In this section, we fix $\delta>0$ and $\varphi \in \mathcal{B}_\delta$, where $\mathcal{B}_\delta := \{\varphi \in C^{2,\alpha}(\R) : \varphi > \delta\}$, since $\mathcal{B}_\delta$ is an open subset of $C^{2,\alpha}$ and is therefore suitable for differentiation.
\subsection{Regularity of \texorpdfstring{$\mathcal{F}_0$}{F0}}
The maps $x \mapsto (1+x^2)^{-a}$, $a\in \{\frac{1}{2}, \frac{3}{2}\}$, and $x \mapsto \frac{1}{x}$ for $x > \delta$ are $C^\infty$ (and even analytic). Meanwhile, the continuous linear\footnote{Every continuous linear map is smooth with higher-order derivatives vanishing identically.}
 maps $L_j: C^{2,\alpha}(\R) \to C^{0,\alpha}(\R)$ defined by $w \mapsto w^{(j)}$ for $j = 0, 1, 2$ are $C^\infty$ and satisfy
$\|L_j(w)\|_{C^{0,\alpha}} \leq \|w\|_{C^{2,\alpha}}.$
 This implies that $\cF_0: \mathcal{B}_\delta \to C^{0,\alpha}$ is $C^\infty$, where we recall
\begin{align*}
\cF_0(\varphi)(t)&:= \frac{-\varphi''}{(1+\varphi'^2)^{\frac{3}{2}}}+\frac{1}{\varphi(1+\varphi'^2)^{\frac{1}{2}}}.
\end{align*}
Now we perform the Taylor expansion. First, we write
\begin{align*}
1+(\varphi'+\varepsilon w')^2 = A + \varepsilon B + \varepsilon^2 C \quad \text{where} \quad A = 1+\varphi'^2,\,\,  \; B = 2\varphi' w', \,\, \,  C = (w')^2.
\end{align*}
Now substituting $\tau= \varepsilon B + \varepsilon^2 C$, with the aid of Taylor expansions
formula \begin{align*}
(1+x)^\beta = 1 + \beta x + \frac{\beta(\beta-1)}{2!} x^2 + \frac{\beta(\beta-1)(\beta-2)}{3!} x^3 + \dots \quad |x|<1,\,\, \beta\in \R,
\end{align*}
we find that
\begin{align*}
(A+\tau)^{-1/2}
&=A^{-1/2} - \varepsilon \frac{\varphi' w'}{A^{3/2}}  + \varepsilon^2 \left( -\frac{(w')^2}{2A^{3/2}} + \frac{3\varphi'^2(w')^2}{2A^{5/2}} \right) + O((\varepsilon w')^3),\\
(A+\tau)^{-3/2}
&= A^{-3/2} - 3\varepsilon \frac{\varphi' w'}{A^{5/2}} + \varepsilon^2 \left( -\frac{3}{2}\frac{(w')^2}{A^{5/2}} + \frac{15}{2}\frac{\varphi'^2(w')^2}{A^{7/2}} \right) + O((\varepsilon w')^3).
\end{align*}
 In addition, we have
\begin{align*}
(\varphi+\varepsilon w)'' &= \varphi'' + \varepsilon w''
\qquad
\text{and}\qquad
 \frac{1}{\varphi+\varepsilon w}
= \frac{1}{\varphi} - \varepsilon\frac{w}{\varphi^2} + \varepsilon^2\frac{w^2}{\varphi^3}
+ O((\varepsilon w)^3).
\end{align*}
Altogether, we find that
\begin{align*}
\cF_0(\varphi+\varepsilon w) &=
-\frac{\varphi''+\varepsilon w''}{(1+(\varphi'+\varepsilon w')^2)^{3/2}} +\frac{1}{\varphi+\varepsilon w} \frac{1}{(1+(\varphi'+\varepsilon w')^2)^{1/2}}\\
&=\cF_0(\varphi) + \varepsilon L_\varphi(w) + \varepsilon^2 B_\varphi(w,w) + \varepsilon^3 O(R_\varphi(w,w',w'')),
\end{align*}
where the linear and the quadratics  $L_\varphi(w),B_\varphi(w,w):C^{2,\alpha}(\R )\to C^{0,\alpha}(\R)$ with

\begin{align}
\label{eq:diff-first-f0}
L_\varphi(w)
&:=  -\frac{w''}{(1+\varphi'^2)^{3/2}} + \frac{3\varphi''\varphi'w'}{(1+\varphi'^2)^{5/2}} -\frac{\varphi'w'}{\varphi (1+\varphi'^2)^{3/2}} -\frac{w}{\varphi^2(1+\varphi'^2)^{1/2}}\\
 \begin{split}\label{eq:diff-second-f0}
B_\varphi(w,w)&:=3\frac{\varphi'w'w''}{A^{5/2}}+
\frac32\frac{\varphi''(w')^2}{A^{5/2}}
- \frac{15}{2}
\frac{\varphi''\varphi'^2(w')^2}{A^{7/2}} \\
&\quad\quad-
\frac{(w')^2}{2\varphi A^{3/2}} +
\frac{3\varphi'^2(w')^2}{2\varphi A^{5/2}}+
\frac{\varphi'ww'}{\varphi^2A^{3/2}}+ \frac{w^2}{\varphi^3A^{1/2}},\qquad (A:= 1+\varphi'^2),
\end{split}
\end{align}
 are  bounded operators, as  it is not difficult to check that
\begin{align*}
\|L_\varphi(w)\|_{C^{2,\alpha}(\R)}\leq C_\varphi\|w\|_{C^{0,\alpha}(\R)}\quad \text{and}\quad \|B_\varphi(w,w)\|_{C^{2,\alpha}(\R)}\leq C_\varphi\|w\|^2_{C^{0,\alpha}(\R)}.
\end{align*}
 Moreover, the remainder $R_\varphi(w, w',w'')$ is a homogeneous  polynomial of degree $3$ in variable $w, w'$ and $w''$.  So that for a generic constant only depending on $C_\varphi>0$ we have
\begin{align*}
\|R_\varphi(w, w',w'')\|_{C^{0,\delta}(\R)}\leq C_\varphi\|w\|^3_{ C^{2,\alpha}(\R)}.
\end{align*}
 This implies that, taking $\varepsilon=1$ we find that
\begin{align*}
\lim_{\|w\|_{C^{2,\alpha}(\mathbb{R})}\to 0} \frac{\|\mathcal{ F}_0(\varphi+ w)-\mathcal{ F}_0(\varphi)- L_\varphi(w) -B_\varphi(w,w) \|_{C^{0,\alpha}(\mathbb{R})}}{\|w\|^2_{C^{2,\alpha}(\mathbb{R})}} = 0.
\end{align*}
 Therefore, it follows that $D\cF_0(\varphi)(w)=L_\varphi(w)$ and $D^2\cF_0(\varphi)(w,w)=2B_\varphi(w,w)$.
From the foregoing we deduce the following result.
\begin{lemma}\label{lem:diff-f0}
The mapping $\cF_0: \mathcal{B}_\delta \to C^{0,\alpha}$ is Fr\'echet $C^\infty$. Moreover, for $\varphi\in  \mathcal{B}_\delta$, the first derivative the linear map  $D\cF_0(\varphi) : C^{2,\alpha}(\R )\to C^{0,\alpha}(\R)$  is given
$$D\cF(\varphi)(w)= L_\varphi(w)  =-\frac{w''}{(1+\varphi'^2)^{3/2}} + \frac{3\varphi''\varphi'w'}{(1+\varphi'^2)^{5/2}} -\frac{\varphi'w'}{\varphi (1+\varphi'^2)^{3/2}} -\frac{w}{\varphi^2(1+\varphi'^2)^{1/2}},$$
while the  Hessian $D^2\cF_0(\varphi):C^{2,\alpha}(\R )\times C^{2,\alpha}(\R )\to C^{0,\alpha}(\R) $ is  given by polarization
\begin{align*}
D^2\cF_0(\varphi)(w,v) =
\left(B_\varphi(w+v,w+v) -B_\varphi(w,w)-B_\varphi(v,v)
\right).
\end{align*}
\end{lemma}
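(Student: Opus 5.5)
The plan is to reduce Lemma~\ref{lem:diff-f0} to the smoothness of Nemytskii-type maps built from smooth scalar functions and the bounded linear maps $w\mapsto w^{(j)}$. Write
\[
\cF_0(\varphi)=-\Phi_{3/2}(\varphi')\,\varphi''+R(\varphi)\,\Phi_{1/2}(\varphi'),
\qquad \Phi_a(x)=(1+x^2)^{-a},\qquad R(x)=\frac1x .
\]
The linear maps $\varphi\mapsto\varphi'$ and $\varphi\mapsto\varphi''$, from $C^{2,\alpha}(\R)$ into $C^{1,\alpha}(\R)$ and $C^{0,\alpha}(\R)$ respectively, are bounded and therefore $C^\infty$. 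Pointwise multiplication $C^{0,\alpha}\times C^{0,\alpha}\to C^{0,\alpha}$ is a bounded bilinear map, hence also $C^\infty$. It therefore suffices to show the following:
\begin{enumerate}[$(i)$]
\item For $g\in C^\infty(\R)$, the superposition $u\mapsto g\circ u$ is $C^\infty$ from $C^{1,\alpha}(\R)$, or even from $C^{0,\alpha}(\R)$, into $C^{0,\alpha}(\R)$.
\item For $g\in C^\infty((\delta/2,\infty))$, the same holds on the open set $\{u\in C^{0,\alpha}: u>\delta\}$.
\end{enumerate}
The chain rule then gives $\cF_0\in C^\infty(\mathcal B_\delta;C^{0,\alpha})$.

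For $(i)$ and $(ii)$, the key estimate is that $g\circ u\in C^{0,\alpha}$ whenever $u\in C^{0,\alpha}$. Indeed, $\|g\circ u\|_{C^{0,\alpha}}\le \sup_K|g|+\sup_K|g'|\,[u]_\alpha$, where $K$ is a compact interval containing the range of $u$; in case $(ii)$, $K\subset[\delta,\|u\|_\infty]$. Differentiability follows from the Taylor formula with integral remainder:
\[
g(u+h)-g(u)-g'(u)h=h^2\int_0^1(1-s)\,g''(u+sh)\,\mathrm ds .
\]
Combining this with the algebra property and the same composition bound applied to $g''(u+sh)$ gives a remainder of size $O(\|h\|_{C^{0,\alpha}}^2)$. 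Hence $D(g\circ\cdot)(u)h=g'(u)h$. Continuity of $u\mapsto g'(u)$ in $C^{0,\alpha}$ follows from the same estimate applied to $g'$, via the mean value formula $g'(u)-g'(v)=(u-v)\int_0^1 g''(v+s(u-v))\,\mathrm ds$. Induction on the order, with $g$ replaced by $g^{(k)}$, yields $C^\infty$.

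With smoothness in hand, the explicit derivatives are read off by expanding $\varepsilon\mapsto\cF_0(\varphi+\varepsilon w)$ to second order in $\varepsilon$. This is exactly the binomial expansion of $(A+\varepsilon B+\varepsilon^2C)^{-1/2}$ and $(A+\varepsilon B+\varepsilon^2C)^{-3/2}$ and of $(\varphi+\varepsilon w)^{-1}$ carried out above. Collecting the $\varepsilon$-coefficient gives $L_\varphi(w)$ in \eqref{eq:diff-first-f0}. Collecting the $\varepsilon^2$-coefficient gives $B_\varphi(w,w)$ in \eqref{eq:diff-second-f0}, so $D^2\cF_0(\varphi)(w,w)=2B_\varphi(w,w)$. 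Polarization of the symmetric bilinear form then gives the stated formula for $D^2\cF_0(\varphi)(w,v)$.

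Once $\cF_0$ is known to be $C^2$, the Gâteaux coefficients coincide with the Fréchet derivatives. Consequently, the loosely written remainder term $O((\varepsilon w')^3)$ never needs separate control in $C^{0,\alpha}$. I expect the only real obstacle to be the composition estimate in $(ii)$, because $x\mapsto 1/x$ is smooth only away from $0$. This is handled by the uniform lower bound: for $\|h\|_\infty<\delta/2$, every $u+sh$ with $s\in[0,1]$ stays $\ge\delta/2$, so all derivatives of $1/x$ are bounded on the relevant range.
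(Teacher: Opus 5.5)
Your proposal is correct and follows essentially the paper's own route: you write $\cF_0$ through the smooth scalar maps $(1+x^2)^{-a}$ and $1/x$ composed with the bounded linear maps $w\mapsto w^{(j)}$ to get smoothness, then read off $L_\varphi$ and $D^2\cF_0(\varphi)(w,w)=2B_\varphi(w,w)$ from the second-order expansion in $\varepsilon$, with polarization giving the bilinear form. Beyond the paper, you supply the superposition-operator estimate on $C^{0,\alpha}(\R)$ that it only asserts, and you avoid its loosely stated cubic-remainder bound by noting that once $\cF_0\in C^2$ the pointwise Taylor coefficients must coincide with the Fr\'echet derivatives; the one quibble, which the paper shares, is that on $\R$ the set $\{u>\delta\}$ is open only when read as $\inf u>\delta$ (or inside the periodic space $X$), and this uniform lower bound is in effect what your $\|h\|_\infty<\delta/2$ argument uses.
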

\subsection{Regularity of \texorpdfstring{$\cF_1$}{F1}}
We will first look for a "candidate" for the derivative of $\cF_1$. Then we prove the smoothness of this formal derivative of $\cF_1$. This is the program that will be carried over in   the renaming of this section.
\subsubsection{Candidate for the derivative of $\cF_1$} \label{ss:can-deriv}
In this paragraph, we heuristically derive the expression for the derivative of $\cF_1$ (see \eqref{eq:DefcF1} below), where
\begin{align*}
\cF_1(\varphi)(t)
&=  \int_{\mathbb{R}}\int_{B_1 }\varphi^2(s)
G_\kappa(|(t, \varphi(t) e_1)-(s,\varphi(s)y)|) \mathrm{d} y \mathrm{d}s.
\end{align*}
We observe that for fixed $t,s$, the function
\begin{align*}
f(y)=G_\kappa(|(t, \varphi(t) e_1)-(s,\varphi(s)y)|) =  G_\kappa\big(\left((t-s)^2 + |\varphi(s)y - \varphi(t)e_1|^2\right)^{1/2}\big)
\end{align*}
satisfies, when heuristically differentiating with respect to $\varphi$ in Fr\'echet,

\begin{align*}
\varphi^2(s) & D_\varphi f(w) (y)
 =  \varphi^2(s)G'_\kappa(\cdots) \times
\left\lbrace\frac{(w(s)y-w(t)e_1)\cdot (\varphi(s)y- \varphi(t) e_1) }{\left((t-s)^2 + |\varphi(s)y - \varphi(t)e_1|^2\right)^{1/2} } \right\rbrace\\
&= \varphi^2(s)G_\kappa'(\cdots)\times  \left\lbrace (w(s)y-w(t)e_1)\cdot \frac{\nabla_y|(t, \varphi(t) e_1)-(s,\varphi(s)y)| }{\varphi(s)} \right\rbrace\\
&=\varphi(s)(w(s)y-w(t)e_1)\cdot \nabla_y \left[G_\kappa (|(t, \varphi(t) e_1)-(s,\varphi(s)y)|)\right]\\
&= \varphi(s)(w(s)y-w(t)e_1)\cdot \nabla f(y).
\end{align*}
Therefore, a formal computation, of $D\cF_1(\varphi)(w)$  via  chain rule reveals that
\begin{align}\label{eq:DefcF1-heur}
\begin{split}
D\cF_1(\varphi)(w)(t)
&=\int_{\R}\int_{B_1} \hspace*{-1ex}2\varphi(s)w(s) f(y)\mathrm{d}y \mathrm{d}s + \varphi(s)  (w(s)y-w(t)e_1)\cdot \nabla f(y)\mathrm{d}y \mathrm{d}s .
\end{split}
\end{align}
Applying integration by parts to the second term gives
\begin{align*}
\int_{B_1}\hspace{-1ex}  (w(s)y-w(t)&e_1)\cdot \nabla f(y)\mathrm{d}y
=-2 \int_{B_1} \hspace{-1ex} w(s) f(y)\mathrm{d}y+ \int_{\mathbb{S}^1}\hspace{-1ex}  (w(s)\theta-w(t)e_1)\cdot  \theta f(\theta)\mathrm{d}\theta\\
&= -2 \int_{B_1} \hspace{-1ex} w(s) f(y)\mathrm{d}y + \int_{\mathbb{S}^1} \Big[(w(s)-w(t)) + \frac{w(t)}{2}|\theta-e_1|^2\Big] f(\theta)\mathrm{d}\theta
\end{align*}
where  $(1-\theta\cdot e_1) =\frac12|\theta-e_1|^2$. Inserting this in \eqref{eq:DefcF1-heur}, we obtain
\begin{align}
\label{eq:DefcF1}
\begin{split}
D\cF_1(\varphi)(w)(t)
&=  \int_\R \int_{\mathbb{S}^1}(w(s)-w(t))   G_\kappa(|(t, \varphi(t) e_1)-(s,\varphi(s)\theta)|) \varphi(s) \mathrm{d}\theta  \mathrm{d}s \\
&\quad +\frac{w(t)}{2}  \int_\R \int_{\mathbb{S}^1} |\theta-e_1|^2   G_\kappa(|(t, \varphi(t) e_1)-(s,\varphi(s)\theta)|) \varphi(s)\mathrm{d}\theta  \mathrm{d}s .
\end{split}
\end{align}
\subsubsection{$D\cF_1(\varphi)$ is the Fr\'echet derivative of $\cF_1$}
We claim that the  linear operator $D\cF_1(\varphi): X\to Y$ is bounded.
Indeed, proceeding analogously as in Proposition \ref{pro:F-in-Holder}, we find that
\begin{align*}
\|F_\varphi\|_{C^{0,\alpha}(\R)}&\leq   C\|w'\|_{L^\infty(\R)} \|\varphi\|_{L^\infty(\R)} \leq C\|w\|_{C^{2,\alpha}(\R)} \|\varphi\|_{L^\infty(\R)}
\end{align*}
where we define
\begin{align*}
F_\varphi(w) &:=\int_{\mathbb{S}^1} \int_\mathbb{R}
(w(t)- w(s)) G_\kappa(|(t, \varphi(t) e_1)-(s,\varphi(s)\theta)|) \varphi(s)ds \mathrm{d} \theta.
\end{align*}
Meanwhile, since $G_k(r)= \frac{ e^{-\kappa r}}{r}$  we observe that
\begin{align*}
G_\kappa(|(t, \varphi(t) e_1)-(s,\varphi(s)\theta)|)
&=G_\kappa \big(\big((t-s)^2+(\varphi(t)-\varphi(s))^2+ \varphi(t)\varphi(s)|\theta-e_1|^2\big)^{1/2}\big)\\
&\leq
|\theta-e_1|^{-1}(\varphi(t)\varphi(s))^{-1/2}\, e^{-\kappa |t-s|}.
\end{align*}
Using this and the fact  $0<\delta = \inf_{t\in\mathbb{R}} \varphi(t)$
we get

\begin{align*}
\Big\|\frac{|w(t)|}{2} \int_{\mathbb{R}}\int_{\mathbb{S}^1} |\theta-e_1|^2& \varphi(s)G_\kappa(|(t, \varphi(t) e_1)-(s,\varphi(s)\theta)|) \mathrm{d} \theta \mathrm{d}s\Big\|_{C^{0,\alpha}(\R) }\\
 &  \leq 3\|w\|_{C^{0,\alpha}(\R)}\int_{\mathbb{R}}\int_{\mathbb{S}^1}\frac{|\theta-e_1|}{2} \Big(\frac{\varphi(s)}{\inf_{t\in\mathbb{R}}\varphi(t)} \Big)^{1/2}
 e^{-\kappa|r|} \mathrm{d} \theta \mathrm{d}r\\
&\leq
 6|\mathbb{S}^1| \kappa^{-1}\delta^{-1/2} \|w\|_{C^{2,\alpha}(\R)} \|\varphi\|^{1/2}_{L^\infty(\R)}.
\end{align*}
Combining both estimates proves the
boundedness of $D\cF_1(\varphi)$
\begin{align*}
\|D\cF_1(\varphi)(w) \|_{C^{0,\alpha}(\R)} \leq C\|w\|_{C^{2,\alpha}(\R)}.
\end{align*}
\noindent Before we complete the proof that $\cF_1$ is $C^\infty$ with Fr\'echet derivative $D\cF_1(\varphi)$ given by \eqref{eq:DefcF1}, we must establish the next Lemma \ref{lem:diff-cJ}. To this end, we define $\Lambda_0: C^{0,\beta}(\R)\times \R^3\to \R$  by
\begin{align*}
\Lambda_0(\varphi,t,r,p)=  (rp)^{-1}(\varphi(t)-\varphi(t-pr))=  \int_0^1\varphi'(t-\varrho pr )\mathrm{d}\varrho .
\end{align*}
So that for $\varphi \in C^{1, \beta}(\R)$ with $\beta\in (0,1]$ we have
\begin{align*}
|\Lambda_0(\varphi,t_1,r,p)-\Lambda_0(\varphi,t_2,r,p)|\leq 4 \|\varphi\|_{C^{1,\beta}(\R)} |t_1-t_2|^\beta.
\end{align*}
We now define
$$
p_\theta=|\theta-e_1|.
$$
Making the change of variable  $r=\frac{t-s}{p_\theta },$ we get the new expression
\begin{align}
D\cF_1(\varphi)(w)(t)
&=-\int_{\mathbb{S}^1}   \int_\R (w(t)- w(t-r  ))  \varphi(t-r) \overline{\cK}(\varphi,t,r,p_\theta) \mathrm{d}r  \mathrm{d}\theta   \nonumber\\
&\quad+\frac{w(t)}{2}\int_{\mathbb{S}^1}p_\theta^2 \int_\R    \varphi(t-rp_\theta) \cK(\varphi,t,r,p_\theta)  \mathrm{d}r \mathrm{d}\theta , \label{eq:cand-deriv-cF}
\end{align}
where $\cK,\overline{\cK}:\mathcal{O}\times \R^3\to \R$, our  kernels $\cK$ and $\overline{\cK}$  take the form
\begin{align*}
\overline{ \cK}(\varphi,t,r,p )
%
&= \frac{   \exp\left\{-{\kappa}\left\{  r^2+   r^2 \Lambda_0(\varphi,t,r,1)^2 + \varphi(t)\varphi(t- r) p^2    \right\}^{1/2} \right\}}{\left\{  r^2+
r^2 \Lambda_0(\varphi,t,r,1)^2 + \varphi(t)\varphi(t-r)p^2  \right\}^{1/2} },
\\
\cK(\varphi,t,r,p )
%
&= \frac{   \exp\left\{-{\kappa}{p}\left\{  r^2+  r^2 \Lambda_0(\varphi,t,r,p)^2+ \varphi(t)\varphi(t-p r)    \right\}^{1/2} \right\}}{\left\{  r^2+
r^2 \Lambda_0(\varphi,t,r,p)^2 + \varphi(t)\varphi(t-pr)  \right\}^{1/2} }.
\end{align*}

We then have to prove the following
\begin{lemma}
\label{lem:diff-cJ}
Let $\delta>0$, $\beta\in (0,1)$ and $\alpha\in (0,\beta)$. Define the set
\begin{align*}
\cB^\beta_\delta:=\{\varphi\in C^{1,\beta}(\R)\,:\, \varphi>\delta\}.
\end{align*}
Then
for every $w\in C^{0, \beta}(\R)$, the map
$\cJ_w:  \cB^\beta_\delta \to C^{0,\alpha}(\R)$
defined by
\begin{align*}
\cJ_w(\varphi)(t)
&=-\int_{\mathbb{S}^1}   \int_\R (w(t)- w(t-r  ))  \varphi(t-r) \overline{\cK}(\varphi,t,r, p_\theta) \mathrm{d}r  \mathrm{d}\theta   \nonumber\\
&\quad+\frac{w(t)}{2}\int_{\mathbb{S}^1}p_\theta^2 \int_\R    \varphi(t-rp_\theta) \cK(\varphi,t,r,p_\theta)  \mathrm{d}r \mathrm{d}\theta
\end{align*}
is of class $C^\infty$, where $p_\theta=|\theta-e_1|$.  Moreover, for every $k\in \N$, there exists a constant $c=c(\kappa,\delta,\alpha,\beta,k)>1$ such that
$$
\|D^k \cJ_w(\varphi)\|\leq c  \|w\|_{C^{0,\beta}(\R)}(1+\|\varphi\|_{C^{1,\beta}(\R)})^c.
$$

\end{lemma}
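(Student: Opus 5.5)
The plan is to write $\cJ_w$ as a composition of maps that are smooth for elementary reasons, together with integral operators whose kernels have an integrable singularity. The singularity is at $r=0$ in the first term and at $p_\theta=0$, i.e. $\theta=e_1$, in the second. Both kernels are built from three pieces:
\begin{align*}
&\text{(i) the map } \varphi\mapsto \Lambda_0(\varphi,t,r,p)=\int_0^1\varphi'(t-\varrho pr)\,\mathrm{d}\varrho, \text{ which is linear and continuous from } C^{1,\beta} \text{ into bounded functions,}\\
&\text{(ii) the map } \varphi\mapsto \varphi(t)\varphi(t-pr), \text{ which is bilinear,}\\
&\text{(iii) the real function } \Phi(q)=q^{-1/2}e^{-\kappa c\sqrt q}, \text{ which is smooth on } (0,\infty).
\end{align*}
I would first rewrite $\overline{\cK}=\Phi(Q_1)$ with $Q_1=r^2(1+\Lambda_0^2)+\varphi(t)\varphi(t-r)p^2$, and $\cK=\Phi_p(Q_2)$ with $Q_2=r^2(1+\Lambda_0^2)+\varphi(t)\varphi(t-pr)$, where $\Phi_p(q)=q^{-1/2}e^{-\kappa p\sqrt q}$.

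Since $\varphi>\delta$, we have the lower bounds $Q_1\ge r^2+\delta^2p^2$ and $Q_2\ge r^2+\delta^2$. Each derivative $\Phi^{(k)}(q)$ is bounded by $C_k q^{-1/2-k}e^{-\kappa c\sqrt q}$. A $k$-th Fréchet derivative of $\Phi(Q)$ in the directions $h_1,\dots,h_k$ is a sum of terms $\Phi^{(j)}(Q)\prod D^{m_i}Q[h]$. Each $D^mQ$ is bounded by $C(1+r^2)(1+\|\varphi\|_{C^{1,\beta}})^2\prod\|h_i\|_{C^{1,\beta}}$, and in the $p^2$-part the factor $p^2$ comes with it. So every such term is dominated by the pointwise bound
\[
C(1+\|\varphi\|_{C^{1,\beta}})^{c}\,(r^2+\delta^2p^2)^{-1/2}\,e^{-\kappa' |r|},
\]
because each extra factor $q^{-1}$ is compensated by a factor $r^2$ or $p^2$ coming from $DQ$. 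Multiplying by $|w(t)-w(t-r)|\le \|w\|_{C^{0,\beta}}|r|^\beta$ makes the first term integrable in $r$. Its integral in $\theta$ has at worst a logarithmic singularity at $p_\theta=0$, which is integrable on $\mathbb{S}^1$. For the second term, the factor $p^2$ together with $Q_2\ge\delta^2$ gives integrability directly.

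To get Fréchet smoothness, I would check that the candidate $k$-th derivative, obtained by differentiating under the integral, is the actual derivative. This follows from Taylor's formula with integral remainder applied pointwise to $\Phi\circ Q$, combined with the dominated bounds above, which hold uniformly on a neighbourhood of $\varphi$ in $\cB^\beta_\delta$ (for instance on the ball where $\varphi+h>\delta/2$). A standard induction then shows that each derivative is continuous in the operator norm. These steps give $C^\infty$ regularity into $L^\infty$, together with the polynomial bound on $\|D^k\cJ_w(\varphi)\|$.

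\textbf{Main obstacle.} The main difficulty is the Hölder seminorm in $t$, and this is exactly why the target space is $C^{0,\alpha}$ with $\alpha<\beta$. For $|t_1-t_2|=a$ I would split the $r$-integral as in Proposition \ref{pro:F-in-Holder}, into the regions $|r|<a$ and $|r|\ge a$.
\begin{itemize}
\item On $|r|<a$, I would bound each of the two values separately by $\int_{|r|<a}|r|^{\beta}(r^2+\delta^2p^2)^{-1/2}\,\mathrm{d}r\lesssim a^\beta\log(1/a)$ after integrating in $\theta$.
\item On $|r|\ge a$, I would use that $t\mapsto\Lambda_0$ and $t\mapsto\varphi(t)\varphi(t-r)$ are $\beta$-Hölder with constants controlled by $\|\varphi\|_{C^{1,\beta}}$. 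Then the mean value theorem applied to $\Phi^{(j)}$ gives a difference bounded by $a^\beta$ times the next-order kernel $Q^{-1}\Phi^{(j)}$-type bound. Integrating this produces a factor $\log(1/a)$, and $w(t)-w(t-r)$ is likewise $\beta$-Hölder in $t$.
\end{itemize}
Altogether the Hölder modulus is $a^\beta\log(1/a)$, which is at most $C_{\alpha,\beta}a^\alpha$, and this yields the stated estimate for all $k$ with a constant $c(\kappa,\delta,\alpha,\beta,k)$.
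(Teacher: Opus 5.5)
Your proposal is correct and takes essentially the same route as the paper. Both arguments first bound the $\varphi$-derivatives of $\overline{\cK}$ and $\cK$ pointwise and H\"older-in-$t$ (Lemma~\ref{lem:est-cK-2D}, via derivative bounds for $q\mapsto q^{-1/2}e^{-\kappa p\sqrt q}$), then get $C^{0,\alpha}$ bounds on the candidate derivatives by splitting at $|r|=|t_1-t_2|$, which yields a modulus $a^\beta\log(1/a)\lesssim a^\alpha$ (Lemma~\ref{lem:est-cand-deriv-2D}), and finally apply the standard Taylor-remainder/induction argument to which the paper defers. Your version is actually more explicit than the paper's sketch, notably the compensation $(r^2+p^2)/(r^2+\delta^2p^2)\le C_\delta$ for $Q_1$ (and $(1+r^2)/(r^2+\delta^2)\le C_\delta$ for $Q_2$) and the passage to a smaller rate $\kappa'<\kappa$, which is genuinely needed because derivatives of $q^{-1/2}e^{-\kappa\sqrt q}$ carry extra powers of $\sqrt q$.
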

Once we have  the regularity  of $\cJ_w$ in this  Lemma, we easily deduce that    $\cF_1$ is smooth.
\begin{corollary}\label{cor:diff-f1}
The mapping $\cF_1: \mathcal{B}_\delta \to C^{0,\alpha}$ is  $C^\infty$, with $D\cF_1(\varphi)$ given as in $\eqref{eq:DefcF1}$.
\end{corollary}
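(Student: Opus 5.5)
The plan is to deduce smoothness of $\cF_1$ from Lemma~\ref{lem:diff-cJ} by showing two things: that the Gateaux derivative of $\cF_1$ at $\varphi$ in direction $w$ equals $\cJ_w(\varphi)$, and that the map $\varphi\mapsto D\cF_1(\varphi)$ is itself smooth into $\mathcal{L}(C^{2,\alpha},C^{0,\alpha})$. First fix $\beta\in(\alpha,1)$. We have $C^{2,\alpha}(\R)\hookrightarrow C^{1,\beta}(\R)$ continuously, so $\cB_\delta\subset \cB^\beta_\delta$ and the inclusion is a bounded linear (hence smooth) map. Also $w\in C^{2,\alpha}\subset C^{0,\beta}$ with $\|w\|_{C^{0,\beta}}\le C\|w\|_{C^{2,\alpha}}$. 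The heuristic computation of Subsection~\ref{ss:can-deriv} (chain rule plus the integration by parts on $B_1$), followed by the change of variables $r=(t-s)/p_\theta$ leading to \eqref{eq:cand-deriv-cF}, identifies the candidate $D\cF_1(\varphi)(w)$ with $\cJ_w(\varphi)$.

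Next I would make the heuristic rigorous at the Gateaux level. For fixed $t$, differentiate $\varepsilon\mapsto \cF_1(\varphi+\varepsilon w)(t)$ under the integral. Domination is provided by the bounds $G_\kappa(\cdot)\le |\theta-e_1|^{-1}(\varphi(t)\varphi(s))^{-1/2}e^{-\kappa|t-s|}$ and the analogous bound for $G_\kappa'$. For $\varphi\ge\delta$ these are integrable in $(s,y)$: the only singularity is at $y=e_1$, $s=t$, where it behaves like $|(t-s,\,y-e_1)|^{-2}$ after multiplying by the derivative factor $\varphi(s)y-\varphi(t)e_1$, and this is integrable in three dimensions. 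The integration by parts on $B_1$ is justified by excising a small ball around $e_1$; the boundary contribution is $O(\rho\cdot\rho^{-1}\cdot\rho)\to 0$. This gives pointwise Gateaux differentiability with derivative $\cJ_w(\varphi)(t)$.

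The upgrade to Fréchet smoothness then goes as follows. By Lemma~\ref{lem:diff-cJ}, for each $w$ the map $\varphi\mapsto\cJ_w(\varphi)$ is $C^\infty$ into $C^{0,\alpha}$. Since $\cJ_w$ is linear in $w$, the bound $\|D^k\cJ_w(\varphi)\|\le c\|w\|_{C^{0,\beta}}(1+\|\varphi\|_{C^{1,\beta}})^c$ shows that the map $\varphi\mapsto D\cF_1(\varphi)$, $w\mapsto \cJ_w(\varphi)$, is $C^\infty$ from $\cB_\delta$ into $\mathcal{L}(C^{2,\alpha},C^{0,\alpha})$, with locally uniform bounds. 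In particular it is continuous in operator norm. A Gateaux-differentiable map whose Gateaux derivative is continuous in operator norm is Fréchet $C^1$, via the mean value inequality $\|\cF_1(\varphi+w)-\cF_1(\varphi)-D\cF_1(\varphi)w\|\le \sup_{\tau\in[0,1]}\|D\cF_1(\varphi+\tau w)-D\cF_1(\varphi)\|\,\|w\|$. For this to apply, the pointwise identity must hold as an identity in $C^{0,\alpha}$; I would verify it by integrating $\frac{d}{d\tau}\cF_1(\varphi+\tau w)(t)=\cJ_w(\varphi+\tau w)(t)$ pointwise and then using continuity of $\tau\mapsto\cJ_w(\varphi+\tau w)$ in $C^{0,\alpha}$. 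Higher smoothness then follows because $D\cF_1$ is $C^\infty$ as an operator-valued map.

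The main obstacle is the multilinear uniformity in $w$ at all orders. One has to check that Lemma~\ref{lem:diff-cJ}'s bound, which is stated for each fixed $w$, actually yields smoothness of $\varphi\mapsto D\cF_1(\varphi)$ into $\mathcal{L}(C^{2,\alpha},C^{0,\alpha})$. I would handle this by noting that $D^k\cJ_w(\varphi)[h_1,\dots,h_k]$ is linear in $w$ and that its constant depends only on $(\kappa,\delta,\alpha,\beta,k)$. Hence the sup over $\|w\|\le1$ is finite. Continuity in $\varphi$ of $D^k\cJ_w$, uniformly in $w$, would follow by applying the $(k+1)$-th derivative bound together with the mean value theorem.
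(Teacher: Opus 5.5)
Your proposal is correct and follows essentially the paper's route. Both arguments integrate the pointwise identity $\frac{d}{d\tau}\cF_1(\varphi+\tau w)=\cJ_w(\varphi+\tau w)$ along the segment and control the remainder in $C^{0,\alpha}$ through Lemma~\ref{lem:diff-cJ}; the paper applies the derivative bound on $\cJ_w$ to get an $O(\|w\|^2)$ remainder, while you use operator-norm continuity of $\varphi\mapsto D\cF_1(\varphi)$. Your version is also more careful than the paper's on two points it leaves tacit: you justify the differentiation under the integral and the integration by parts, and you make explicit the uniformity in $w$ (linearity of $w\mapsto\cJ_w$ plus the $w$-independent constant in Lemma~\ref{lem:diff-cJ}) that is needed to pass from smoothness of each $\cJ_w$ to smoothness of $\cF_1$.
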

\begin{proof}
We already know that the linear operator $w\mapsto D\cF_1(\varphi)(w):= \cJ_w(\varphi)$ is bounded.  Let  $w\in C^{2,\alpha}(\mathbb{R})\subset  C^{1,\beta}(\R)$, such that $\|w\|_{C^{2,\alpha}(\mathbb{R})}<\delta/2$.
By our formal computations in Section \ref{ss:can-deriv}, together with the fundamental theorem of calculus, we have
\begin{align*}
[\mathcal{ F}_1(\varphi+ w)-\mathcal{ F}_1(\varphi)- \mathcal{J}_w(\varphi )](t)
%
&= \int_0^1 [\mathcal{J}_w(\varphi+ \varrho  w) -  \mathcal{J}_w(\varphi )](t)\, d\varrho\\
%
%
&= \int_0^1 \varrho\int_0^1 D\mathcal{J}_w(\varphi+ \rho \varrho w)(w)  (t)\,d\rho d\varrho.
\end{align*}
  Since  $\|w\|_{L^\infty(\R)}\leq \|w\|_{C^{2,\alpha}(\mathbb{R})}<\delta/2$ we get, $w>-\delta/2$ so that  $ \varphi+\rho \varrho w>\delta/2 $ for $\rho,\varrho\in (0,1)$ that is  $\varphi+\rho \varrho w\in \mathcal{B}^\beta_{\delta/2}$ , it then follows from Lemma \ref{lem:diff-cJ} that
\begin{align*}
\|\mathcal{ F}_1(\varphi+ w)-\mathcal{ F}_1(\varphi)- \mathcal{J}_w(\varphi )\|_{C^{0,\alpha}(\mathbb{R})}&\leq  \int_0^1 \int_0^1 \| D\mathcal{J}_w(\varphi+ \rho \varrho  w)(w)\|_{C^{0,\alpha}(\mathbb{R})}\,  d \rho d\varrho\\
&\leq c  \, \|w\|_{C^{2,\alpha}(\mathbb{R})}^2(1+\|\varphi\|_{C^{2,\alpha}(\mathbb{R})}+\|w\|_{C^{2,\alpha}(\mathbb{R})} )^c.
\end{align*}
Therefore, we conclude that $\cF_1$ is differentiable on $\cB^\beta_{\delta/2}$ with $
D\cF_1(\varphi)(h)=\cJ_h(\varphi)$  since
\begin{align*}
\lim_{\|w\|_{C^{2,\alpha}(\R)}\to 0} \frac{\|\mathcal{ F}_1(\varphi+ w)-\mathcal{ F}_1(\varphi)- \mathcal{J}_w(\varphi )\|_{C^{0,\alpha}(\mathbb{R})}}{\|w\|_{C^{2,\alpha}(\mathbb{R})}} = 0.
\end{align*}
The $C^\infty$-character of $\cF_1$ on $\cB^\beta_{\delta}$ now follows from the one of $\cJ_w$ by Lemma \ref{lem:diff-cJ}, and the fact that $\delta$ is an arbitrary small positive number.
\end{proof}

\subsubsection{Proof of Lemma \ref{lem:diff-cJ} }
The proof of  Lemma \ref{lem:diff-cJ} will be inspired from \cite[Section 4 or even Section 5]{CFW18}. Comparison the situation in  \cite{CFW18},   it is worth noticing that  our kernel here is "less singular at the origin" and "decay faster at infinity".  However the main term of our  kernel given by $\frac{1}{r}e^{-\kappa r}$ is clearly not homogeneous and  not integrable at the origin.  We therefore have to study  estimates related to the kernel $\cK$ and $\overline{\cK}$ from which we will deduce the differentiability of $\cF_1$ between sharp H\"{o}lder spaces.

For a function $u: \R \to \R$, we use the notation
$$
[u; s_1,s_2]:= u(s_1)-u(s_2)\qquad \text{for $s_1,s_2 \in \R$,}
$$
and we note the obvious equality
\begin{align}
\label{eq:uv-s_1s_2}
[uv; s_1,s_2] = [u;s_1,s_2]v(s_1) + u(s_2)[v;s_1,s_2] \qquad \text{for $u,v: \R \to \R$, $s_1,s_2 \in \R$.}
\end{align}
As  in \cite[Section 4 ]{CFW18}, to prove the regularity of $\cJ_w$, it will be crucial to have estimates
related to the maps  $\cK $ and $\overline{\cK} $.
\begin{lemma}\label{lem:est-cK-2D}
Let $k\in \N$.
There exists a constant $ c=c(\kappa,\beta,\alpha,k,\delta)>1 $
such that  for all $(t,t_1,t_2,r)\in\R^4$, $|p|<2$ and  ${u}\in \cB^\beta_\delta$, we have
\begin{enumerate}
\item[(i)]
\begin{align}
\label{eq:Dk-K0-s}
\|  D_{u}^k \overline{\cK}({u},t,r,p    )
\|
&\leq
{c(1+ \|{u}\|_{C^{1,\beta}(\R)} )^{c}   }  \frac{  \exp(-\kappa|r| )}{   |r|}  ,
\\
\label{eq:Dk-K0-s_1s_2}
\| [D_{u}^k \overline{\cK}({u},\cdot ,r );t_1,t_2] \|&\leq
{c(1+ \|{u}\|_{C^{1,\beta}(\R)} )^{c}   \, |t_1-t_2|^\beta}   \frac{  \exp(-\kappa|r|) }{   |r|   }.
\end{align}
\item[(ii)]
\begin{align}
\label{eq:Dk-K1-s}
\|  D_{u}^k {\cK} ({u},t,r,p   )   &\|\leq
{c(1+ \|{u}\|_{C^{1,\beta}(\R)} )^{c}   }   \frac{  \exp(-\kappa|p|(1 + r^2)^{1/2} )  }{    (1 + r^2)^{1/2} },
\\
\label{eq:Dk-K1-s_1s_2}
\| [D_{u}^k {\cK} ({u},\cdot ,r);t_1,t_2] \|
&\leq
c(1+ \|{u}\|_{C^{1,\beta}(\R)} )^{c}   \, |t_1-t_2|^\beta \frac{ \exp(-\kappa|p|(1 + r^2)^{1/2} )}{    (1 + r^2)^{1/2}}.
\end{align}
\end{enumerate}
\end{lemma}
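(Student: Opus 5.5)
The plan is to write both kernels as compositions $\overline{\cK}=\Phi\circ \overline{Q}$ and $\cK=\Phi_p\circ Q$. Here $\Phi(x)=x^{-1/2}e^{-\kappa x^{1/2}}$ and $\Phi_p(x)=x^{-1/2}e^{-\kappa|p|x^{1/2}}$ are smooth on $(0,\infty)$, and the inner quantities are
\begin{align*}
\overline{Q}(u,t,r,p)&=r^2\bigl(1+\Lambda_0(u,t,r,1)^2\bigr)+u(t)u(t-r)p^2,\\
Q(u,t,r,p)&=r^2\bigl(1+\Lambda_0(u,t,r,p)^2\bigr)+u(t)u(t-pr).
\end{align*}
The estimates then follow from a Fa\`a di Bruno expansion. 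Two ingredients are needed: lower and upper bounds on $\overline{Q}$ and $Q$, and bounds on the $u$-derivatives of $\overline{Q}$ and $Q$ together with their H\"older increments in $t$.

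\emph{Step 1: bounds on the inner quantities.} I will first show that $\overline{Q}$ and $Q$ depend polynomially on $u$ (of degree $\le 2$). This holds because $u\mapsto u(t)$, $u\mapsto u(t-r)$ and $u\mapsto\Lambda_0(u,t,r,p)=\int_0^1u'(t-\varrho pr)\,d\varrho$ are bounded linear maps on $C^{1,\beta}$. Consequently $D^j_u\overline{Q}$ and $D^j_uQ$ vanish for $j\ge3$. For the surviving derivatives,
\[
|D_u^j\overline{Q}|\le c(1+\|u\|_{C^{1,\beta}})^{2}\,(r^2+p^2)
\quad\text{and}\quad
|D_u^jQ|\le c(1+\|u\|_{C^{1,\beta}})^2(1+r^2),
\]
for $j\le 2$, using $|p|<2$.

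The lower bounds use $u>\delta$: $\overline{Q}\ge r^2$ and $Q\ge r^2+\delta^2\ge c_\delta(1+r^2)$. The upper bound is $Q\le C(1+\|u\|_{C^{1,\beta}})^2(1+r^2)$. For the H\"older increments in $t$, I will use the product rule \eqref{eq:uv-s_1s_2} together with the estimate $|\Lambda_0(u,t_1,r,p)-\Lambda_0(u,t_2,r,p)|\le 4\|u\|_{C^{1,\beta}}|t_1-t_2|^\beta$ already noted. These give $|[D^j_u\overline{Q};t_1,t_2]|\le c(1+\|u\|)^2|t_1-t_2|^\beta r^2$ plus a $p^2$ term, and the analogous bound with $(1+r^2)$ for $Q$.

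\emph{Step 2: derivatives of the outer functions.} Direct differentiation gives
\[
|\Phi^{(m)}(x)|\le c_m x^{-1/2-m}(1+x^{m/2})e^{-\kappa x^{1/2}},
\]
and the same for $\Phi_p$ with $\kappa$ replaced by $\kappa|p|$. By Fa\`a di Bruno, $D^k_u\overline{\cK}$ is a sum of terms $\Phi^{(m)}(\overline{Q})\prod_i D^{j_i}_u\overline{Q}$ with $m\le k$. Each factor $D^{j_i}\overline{Q}$ is controlled by $(1+\|u\|)^2\,\overline{Q}\cdot C(\|u\|)$, since $r^2+p^2\lesssim\overline{Q}\,\delta^{-2}(1+\|u\|)^2$ once we use $\overline{Q}\ge \max(r^2,\delta^2p^2)$. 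Each term is therefore bounded by $c(1+\|u\|)^c\,\overline{Q}^{-1/2}(1+\overline{Q}^{m/2})e^{-\kappa\overline{Q}^{1/2}}$. Since $\overline{Q}\ge r^2$, the polynomial factor is absorbed: $(1+\overline{Q}^{m/2})e^{-\kappa\overline{Q}^{1/2}}$ is bounded by $c\,e^{-\kappa' \overline{Q}^{1/2}}$.

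\emph{Main obstacle.} This absorption is where I expect the difficulty. To land exactly on $e^{-\kappa|r|}$ in \eqref{eq:Dk-K0-s}, without losing to some $\kappa'<\kappa$, a polynomial factor $|r|^m$ must not be released. The constants do allow growth in $(1+\|u\|)^c$, but not in $|r|$. My first attempt will be to note that $\overline{Q}^{1/2}\ge|r|\sqrt{1+\Lambda_0^2}$ exceeds $|r|$ by a margin only when $\Lambda_0\neq0$, so that does not help. Instead I will track the derivative structure: each $D_u$ hitting $e^{-\kappa\overline{Q}^{1/2}}$ produces a factor $D\overline{Q}/\overline{Q}^{1/2}$, which is of size $(1+\|u\|)^2|r|$. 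This yields $|r|^m e^{-\kappa|r|}$. If this loss is unavoidable, I will accept a slightly smaller exponent $\kappa'$, which still suffices for the integrability used in Lemma \ref{lem:diff-cJ}; I may also interpret the constant as allowing this.

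\emph{Step 3: the increment estimates.} For \eqref{eq:Dk-K0-s_1s_2} and \eqref{eq:Dk-K1-s_1s_2}, I will write $[\Phi^{(m)}(\overline{Q});t_1,t_2]=\int_0^1\Phi^{(m+1)}(\cdot)\,d\tau\,[\overline{Q};t_1,t_2]$, evaluating along the segment between $\overline{Q}(t_1)$ and $\overline{Q}(t_2)$. Both endpoints satisfy the same lower bound, so the convex combination does too. I will then combine this with \eqref{eq:uv-s_1s_2} across the Fa\`a di Bruno products. Each increment contributes $|t_1-t_2|^\beta$ times the same kernel bound as in Step 2.

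For $\cK$, the lower bound $Q\ge c_\delta(1+r^2)$ gives the factor $(1+r^2)^{-1/2}$. The exponential becomes $e^{-\kappa|p|Q^{1/2}}\le e^{-\kappa|p|c(1+r^2)^{1/2}}$, which again involves the constant-in-exponent issue above. I will handle it the same way, using $Q\ge r^2+\delta^2$ and assuming $\delta$ normalized, or else absorbing the loss into $c$.
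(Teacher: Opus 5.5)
\textbf{Gap: the exponent loss comes from your Step 1 bound and is avoidable; accepting $\kappa'<\kappa$ proves a weaker lemma.} Your architecture is the one the paper sketches: an outer profile composed with an inner quantity quadratic in $u$, Fa\`a di Bruno, and a mean-value formula for increments. Your bound $|\Phi^{(m)}(x)|\le c_m x^{-1/2-m}(1+x^{m/2})e^{-\kappa x^{1/2}}$ is correct. The paper's claimed $|g^{(\ell)}(x)|\le c\,x^{-(1+2\ell)/2}e^{-\kappa|p|x^{1/2}}$ fails for large $x$, since $g'$ contains $-\tfrac{\kappa|p|}{2}x^{-1}e^{-\kappa|p|x^{1/2}}$; that is how its sketch hides the issue you ran into.

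However, as written your plan only gives $|r|^m e^{-\kappa|r|}$, or $e^{-\kappa'|r|}$, which is not the stated estimate. A multiplicative constant cannot absorb $|r|^m$. The loss is an artifact of estimating $D^j_u\overline{Q}$ by $r^2+p^2$, i.e.\ of using only $|\Lambda_0|\le\|u'\|_{L^\infty}$. The missing observation is that $r\Lambda_0(u,t,r,1)=u(t)-u(t-r)$, so also $|r\Lambda_0(u,t,r,1)|\le 2\|u\|_{L^\infty}$, and likewise for each direction $h_i$. With $\overline{Q}\ge r^2+\delta^2p^2$ and $|p|<2$ this gives
\begin{align*}
|D^j_u\overline{Q}[h_1,\dots,h_j]|
&\le c\,(1+\|u\|_{C^{1,\beta}(\R)})^2\bigl(\min\{1,r^2\}+p^2\bigr)\prod_{i=1}^j\|h_i\|_{C^{1,\beta}(\R)}\\
&\le c_\delta\,(1+\|u\|_{C^{1,\beta}(\R)})^2\min\{1,\overline{Q}\}\prod_{i=1}^j\|h_i\|_{C^{1,\beta}(\R)}.
\end{align*}
So for $|r|\ge 1$ you have $D_u\overline{Q}=O(1)$, and $D_u\overline{Q}/\overline{Q}^{1/2}=O(|r|^{-1})$, not $O(|r|)$ as your heuristic claims. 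A Fa\`a di Bruno term with $m$ factors is then bounded by
$\overline{Q}^{-1/2-m}(1+\overline{Q}^{m/2})\min\{1,\overline{Q}\}^m e^{-\kappa\overline{Q}^{1/2}}\le 2\,\overline{Q}^{-1/2}e^{-\kappa\overline{Q}^{1/2}}\le 2|r|^{-1}e^{-\kappa|r|}$.
The last step holds because $y\mapsto y^{-1}e^{-\kappa y}$ is decreasing and $\overline{Q}^{1/2}\ge|r|$. The increments work the same way. First, $|[u(\cdot)-u(\cdot-r);t_1,t_2]|\le c\|u\|_{C^{1,\beta}(\R)}\min\{1,|r|\}\,|t_1-t_2|^\beta$. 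Second, your intermediate point still satisfies $\overline{Q}_\tau\ge r^2+\delta^2p^2$, and the profile bound is decreasing in its argument.

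For (ii) you need the same idea in the form $r\Lambda_0(u,t,r,p)=p^{-1}\bigl(u(t)-u(t-pr)\bigr)$. This gives $|D^j_uQ|\lesssim(1+\|u\|_{C^{1,\beta}(\R)})^2\min\{1+r^2,p^{-2}\}$. Each factor $\kappa|p|Q^{1/2}$ produced by differentiating the exponential is then compensated through $\min\{1+r^2,p^{-2}\}\le(1+r^2)^{1/2}|p|^{-1}$, using $Q\ge c_\delta(1+r^2)$ only in the algebraic prefactors. Every term is then $\lesssim(1+r^2)^{-1/2}e^{-\kappa|p|Q^{1/2}}$. Do not insert $Q\ge c_\delta(1+r^2)$ into the exponential, since that loses a factor $\sqrt{c_\delta}$ in the rate when $\delta<1$. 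Instead use $Q^{1/2}\ge(r^2+\delta^2)^{1/2}\ge(1+r^2)^{1/2}-1$, which costs only the multiplicative constant $e^{2\kappa}$. With these two observations your plan yields the lemma with the exact rates $e^{-\kappa|r|}$ and $e^{-\kappa|p|(1+r^2)^{1/2}}$. Without them it proves only the weaker version, which, as you note, would suffice downstream but is not what is claimed.
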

\begin{proof}
For $p\in (0,2)$ and $x>0$,  we define
\begin{align*}
g(x)=\frac{1}{x^{1/2}}\exp(-\kappa|p| x^{1/2}).
\end{align*}
Let $\ell\in\N$. Then there exists $c=c(\ell,\kappa) $ such that
for every  $p\in (0,2)$ and $r>0$,
\begin{align*}
|g^{(\ell)}(x)|\leq c  x^{-\frac{1+2\ell }{2}}\exp(-\kappa|p| x^{1/2}).
\end{align*}
The proof then is similar to the one of  \cite[Lemm 4.1]{CFW18}. We skip the details.
\end{proof}

\noindent The following  provide the desired estimates for higher derivatives of $\cJ_w$.
\begin{lemma}\label{lem:est-cand-deriv-2D}
Let  $\delta>0$, $\beta\in (0,1)$, $\alpha\in (0,\beta)$, ${u} \in \cB^\beta_\delta$ and $\psi,u, {u}_1,\dots, {u}_k \in C^{1,\beta}(\R)$ and $k\in \N \cup \{0\}$. We  define the functions $\cV_i: \R\to \R$ by
\begin{align*}
\cV_1(t)&= \int_{\mathbb{S}^1} \int_{\R}   (w(t)-w(t-r)) \psi(t-r)   D_{u}^k {\overline{\cK}} ({u},t ,r , p_\theta   ) [u_1,\dots,u_k] \mathrm{d}r  \mathrm{d}\theta
\\
\cV_2(t)&=  w(t) \int_{\mathbb{S}^1} p_\theta^2 \int_{\R}   \psi(t-r p_\theta)  D_{u}^k {\cK} ({u},t ,r   , p_\theta ) [u_1,\dots,u_k] \mathrm{d}r \mathrm{d}\theta .
\end{align*}
Then the following estimates hold
\begin{align}
\label{eq:est-F1-2D}
\|\cV_1\|_{C^{0, \alpha}(\R)}
&\leq  c(1+ \|{u}\|_{C^{1,\beta}(\R)} )^{c} \|w\|_{C^{0,\beta}(\R)}  \|\psi\|_{C^{1,\beta}(\R)}     \prod_{i=1}^k \|{u}_i\|_{C^{1, \beta}(\R)}
\\
\label{eq:est-F2-2D}
\|\cV_2\|_{C^{0,\beta}(\R)}
&\leq  c(1+ \|{u}\|_{C^{1,\beta}(\R)} )^{c}  \|w\|_{C^{0,\beta}(\R)}  \|\psi\|_{C^{1,\beta}(\R)}     \prod_{i=1}^k \|{u}_i\|_{C^{1, \beta}(\R)}.
\end{align}
\end{lemma}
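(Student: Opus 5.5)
We bound the sup norm and the H\"older seminorm of $\cV_1$ and $\cV_2$ separately, using the pointwise kernel bounds of Lemma~\ref{lem:est-cK-2D} together with the splitting rule \eqref{eq:uv-s_1s_2}. Throughout, $M:=c(1+\|u\|_{C^{1,\beta}(\R)})^c\prod_{i=1}^k\|u_i\|_{C^{1,\beta}(\R)}$, where the multilinear factor $\prod_i\|u_i\|$ comes from the operator norm of $D^k_u$ in \eqref{eq:Dk-K0-s}--\eqref{eq:Dk-K1-s_1s_2}.

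\emph{Estimate for $\cV_2$.} Write $\cV_2(t)=w(t)\,\Phi(t)$, where
\[
\Phi(t):=\int_{\mathbb{S}^1}p_\theta^2\int_\R\psi(t-rp_\theta)\,D^k_u\cK(u,t,r,p_\theta)[u_1,\dots,u_k]\,\mathrm{d}r\,\mathrm{d}\theta .
\]
By \eqref{eq:Dk-K1-s}, the inner integrand is bounded by $M\|\psi\|_\infty(1+r^2)^{-1/2}e^{-\kappa p_\theta(1+r^2)^{1/2}}$. Its $r$-integral is $O(1+|\log p_\theta|)$, because for $|r|\lesssim 1/p_\theta$ the bound behaves like $(1+r^2)^{-1/2}$ and beyond that scale it decays exponentially. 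Since $p_\theta^2\log p_\theta$ is integrable on $\mathbb{S}^1$, we get $\|\Phi\|_\infty\le M\|\psi\|_\infty$.

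For the H\"older seminorm, apply \eqref{eq:uv-s_1s_2} to $[\Phi;t_1,t_2]$. The difference $[\psi(\cdot-rp_\theta);t_1,t_2]$ is at most $\|\psi\|_{C^{0,\beta}}|t_1-t_2|^\beta$, and the kernel difference is controlled by \eqref{eq:Dk-K1-s_1s_2}. Integrating as above gives $[\Phi]_\beta\le M\|\psi\|_{C^{1,\beta}}$. One more application of \eqref{eq:uv-s_1s_2} to $w\Phi$ then yields \eqref{eq:est-F2-2D} directly in $C^{0,\beta}$.

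\emph{Estimate for $\cV_1$; this is the main obstacle.} Here the kernel bound $e^{-\kappa|r|}/|r|$ from \eqref{eq:Dk-K0-s} is not integrable at $r=0$, and $w$ is only $C^{0,\beta}$. Integrability comes from $|w(t)-w(t-r)|\le\|w\|_{C^{0,\beta}}|r|^\beta$, which gives $\|\cV_1\|_\infty\le M\|w\|_{C^{0,\beta}}\|\psi\|_\infty\int_\R|r|^{\beta-1}e^{-\kappa|r|}\,\mathrm{d}r$. For the seminorm, set $a=|t_1-t_2|\le 1$ (larger $a$ follows from the sup bound) and split the $r$-integral.

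On $|r|\le a$, bound each of $\cV_1(t_1)$ and $\cV_1(t_2)$ separately by $\int_{|r|\le a}|r|^{\beta-1}\,\mathrm{d}r\lesssim a^\beta$.

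On $|r|>a$, expand the difference with \eqref{eq:uv-s_1s_2} into three pieces.
\begin{enumerate}
\item The piece $[w(\cdot)-w(\cdot-r);t_1,t_2]$ is bounded by $2\|w\|_{C^{0,\beta}}a^\beta$ and is paired with the kernel $e^{-\kappa|r|}/|r|$. Its $r$-integral is $\lesssim a^\beta|\log a|\lesssim a^\alpha$, using $\alpha<\beta$.
\item The piece involving $[\psi(\cdot-r);t_1,t_2]$ is bounded by $\|\psi\|_{C^{0,\beta}}a^\beta$ and carries the factor $|r|^\beta$ from $w$. The resulting integrand $|r|^{\beta-1}e^{-\kappa|r|}$ is integrable, so this piece is $\lesssim a^\beta$.
\item The piece with the kernel difference is handled by \eqref{eq:Dk-K0-s_1s_2}, which gives $a^\beta\int|r|^{\beta-1}e^{-\kappa|r|}\,\mathrm{d}r\lesssim a^\beta$.
\end{enumerate}

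The loss from $\beta$ to $\alpha$ in piece 1 is the only genuinely delicate point, and it is exactly why \eqref{eq:est-F1-2D} is stated in $C^{0,\alpha}$ while \eqref{eq:est-F2-2D} keeps $C^{0,\beta}$. Summing all contributions and integrating over $\theta\in\mathbb{S}^1$ gives \eqref{eq:est-F1-2D}.
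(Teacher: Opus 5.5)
Your proposal is correct and follows essentially the same route as the paper. The sup bounds come from the kernel estimates of Lemma~\ref{lem:est-cK-2D}: for $\cV_1$ the factor $|r|^\beta$ from $w$ makes $e^{-\kappa|r|}/|r|$ integrable, and for $\cV_2$ the logarithmic growth in $p_\theta$ of the $r$-integral is absorbed by $p_\theta^2$ (the paper writes this via the change of variables to $\int_{\mathbb{S}^1}p_\theta^2\int_\R G_\kappa((s^2+\delta^2p_\theta^2)^{1/2})\,\mathrm{d}s\,\mathrm{d}\theta$). The H\"older seminorm of $\cV_1$ is obtained, as in the paper, by splitting at $|r|=|t_1-t_2|$ and applying \eqref{eq:uv-s_1s_2}; the term $|t_1-t_2|^\beta\int_{|r|>|t_1-t_2|}e^{-\kappa|r|}|r|^{-1}\,\mathrm{d}r$ gives exactly the logarithmic loss that forces $\alpha<\beta$, and your write-up is just more explicit than the paper's.
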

\begin{proof}
By Lemma \ref{lem:est-cK-2D}(i), we have
\begin{align}
\label{eq:est-F1-2D-00}
\|\cV_1\|_{L^\infty(\R)} \leq  c(1+ \|{u}\|_{C^{1,\beta}(\R)} )^{c} \|w\|_{C^{0,\beta}(\R)}  \|\psi\|_{C^{1,\beta}(\R)}     \prod_{i=1}^k \|{u}_i\|_{C^{1, \beta}(\R)} .
\end{align}
Moreover, since
$$ | w(t_1)-w(t_2-r)-w(t_1) + w(t_2-r)|\leq2 \|w\|_{C^{0,\beta}(\R)}\min(|t_1-t_2|^\beta,|r|^\beta)  ,$$
 using inductively \eqref{eq:uv-s_1s_2},  we get
\begin{align*}
&|\cV_1(t_1)-\cV_1(t_2)|\leq  c(1+ \|{u}\|_{C^{1,\beta}(\R)} )^{c}  \|w\|_{C^{0,\beta}(\R)}  \|\psi\|_{C^{1,\beta}(\R)}     \prod_{i=1}^k \|{u}_i\|_{C^{1, \beta}(\R)}\\
&\times \left( \int_{|r|<|t_1-t_2|} \frac{\exp(-\kappa|r|)}{|r|^{1-\beta}}\mathrm{d}r + |t_1-t_2|^\beta  \int_{|r|>|t_1-t_2|} \frac{\exp(-\kappa|r|)}{|r|}\mathrm{d}r + |t_1-t_2|^\beta  \right) \\
&\leq c(1+ \|{u}\|_{C^{1,\beta}(\R)} )^{c}  \|w\|_{C^{0,\beta}(\R)}  \|\psi\|_{C^{1,\beta}(\R)}     \prod_{i=1}^k \|{u}_i\|_{C^{1, \beta}(\R)}\\
&\times (|t_1-t_2|^\beta+ |t_1-t_2|^\beta\log(|t_1-t_2|) ) .
\end{align*}
This with \eqref{eq:est-F1-2D-00}, give \eqref{eq:est-F1-2D}.
By Lemma \ref{lem:est-cK-2D}(ii) and a change of variable, we have
\begin{align}
\label{eq:est-F2-2D00}
\|\cV_2\|_{L^\infty(\R)}& \leq  c(1+ \|{u}\|_{C^{1,\beta}(\R)} )^{c}  \|w\|_{C^{0,\beta}(\R)}  \|\psi\|_{C^{1,\beta}(\R)}     \prod_{i=1}^k \|{u}_i\|_{C^{1, \beta}(\R)}\nonumber\\
&\times \int_{\mathbb{S}^1}p_\theta^{2}   \int_\R      G_{ \kappa}((s^2+\delta^2p_\theta^2)^{1/2} )\mathrm{d}s \mathrm{d}\theta  \nonumber\\
&\leq   c(1+ \|{u}\|_{C^{1,\beta}(\R)} )^{c}  \|w\|_{C^{0,\beta}(\R)}  \|\psi\|_{C^{1,\beta}(\R)}     \prod_{i=1}^k \|{u}_i\|_{C^{1, \beta}(\R)}.
\end{align}
Moreover, using inductively \eqref{eq:uv-s_1s_2}, we find that
\begin{align*}
|\cV_2(t_1)-\cV_2(t_2)|&
\leq  c(1+ \|{u}\|_{C^{1,\beta}(\R)} )^{c}  \|w\|_{C^{0,\beta}(\R)}  \|\psi\|_{C^{1,\beta}(\R)}     \prod_{i=1}^k \|{u}_i\|_{C^{1, \beta}(\R)}\\
&\times |t_1-t_2|^\beta \int_{\mathbb{S}^1}p_\theta^{2}   \int_\R      G_{ \kappa}((s^2+\delta^2p_\theta^2)^{1/2} )\mathrm{d}s \mathrm{d}\theta \\
& \leq  c(1+ \|{u}\|_{C^{1,\beta}(\R)} )^{c}  \|w\|_{C^{0,\beta}(\R)}  \|\psi\|_{C^{1,\beta}(\R)}     \prod_{i=1}^k \|{u}_i\|_{C^{1, \beta}(\R)} |t_1-t_2|^\beta.
\end{align*}
This with \eqref{eq:est-F2-2D00}, yield \eqref{eq:est-F2-2D}.
\end{proof}

\medskip
\textbf{Completion of the proof of   Lemma \ref{lem:diff-cJ} }
Since, by Lemma \ref{lem:est-cand-deriv-2D},   we have estimates of all possible candidates for the derivatives of $\cJ_w: \cB^\beta_\delta\to C^{0,\alpha}(\R)$, following the arguments in  \cite[Section 4 or   Section 5]{CFW18}, we immediately have that  $\cJ_w$ is of class $C^\infty$ in $\cB^\beta_\delta$. Moreover
\begin{align*}
\|D^k \cJ_w\| \leq   c(1+ \|{u}\|_{C^{1,\beta}(\R)} )^{c}  \|w\|_{C^{0,\beta}(\R)} .
\end{align*}
This completes the proof of the proposition.

\vspace{-1mm}


\end{document}